\newcommand{\tikzmark}[2]{\tikz[overlay,remember picture] \node[anchor=base] (#1) {#2};}
	\numberwithin{equation}{section}
	\theoremstyle{plain}
		\newtheorem{thm}{Theorem}[section]
		\newtheorem{coro}[thm]{Corollary}
		\newtheorem{lem}[thm]{Lemma}
		\newtheorem{prop}[thm]{Proposition}
	\theoremstyle{definition}
		\newtheorem{defn}[thm]{Definition}
		\newtheorem{ex}[thm]{Example}
        \newtheorem{convention}[thm]{Convention}
	\theoremstyle{remark}
		\newtheorem{rmk}[thm]{Remark}
\newcommand{\ns}{\mathcal{S}} 
\renewcommand{\=}{\coloneqq}	 
\DeclareMathOperator{\image}{im}
\DeclareMathOperator{\id}{id}
\DeclareMathOperator{\coker}{coker}	
\DeclareMathOperator{\Tame}{\mathsf{Tame}}
\DeclareMathOperator{\Vect}{vect}
\DeclareMathOperator{\rank}{rank}
\DeclareMathOperator{\Dgm}{Dgm}
\def\Xgenlabel{\mathcal{G}_X}
\def\Zgenlabel{\mathcal{G}_Z}
\def\Xrellabel{\mathcal{R}_X}
\title{Algebraic Wasserstein distances and stable homological invariants of data}
\author{Jens Agerberg \and Andrea Guidolin \and Isaac Ren \and Martina Scolamiero}
\date{}
\begin{document}

\maketitle

\begin{abstract}
Distances have a ubiquitous role in persistent homology, from the direct comparison of homological representations of data to the definition and optimization of invariants. In this article we introduce a family of parametrized pseudometrics between persistence modules based on the algebraic Wasserstein distance defined by Skraba and Turner,
and phrase them in the formalism of noise systems. This is achieved by comparing $p$-norms of cokernels (resp.\ kernels) of monomorphisms (resp.\ epimorphisms) between persistence modules and corresponding bar-to-bar morphisms, a novel notion that allows us to bridge between algebraic and combinatorial aspects of persistence modules. We use algebraic Wasserstein distances to define invariants, called Wasserstein stable ranks, which are 1-Lipschitz stable with respect to such pseudometrics. We prove a low-rank approximation result for persistence modules which allows us to efficiently compute Wasserstein stable ranks, and we propose an efficient algorithm to compute the interleaving distance between them. Importantly, Wasserstein stable ranks depend on interpretable parameters which can be learnt in a machine learning context. Experimental results illustrate the use of Wasserstein stable ranks on real and artificial data and highlight how such pseudometrics could be useful in data analysis tasks.\\

\noindent
{\em MSC:} 55N31, 62R40, 55U99\\

\noindent
{\em Keywords:} Persistent homology, persistence modules, stable topological invariants of data, Wasserstein metrics
\end{abstract}

\tableofcontents

\subsection*{Acknowledgements}
This work was partially supported by the Swedish Research Council (Vetenskapsrådet), the Wallenberg AI, Autonomous Systems and Software Program (WASP) funded by the Knut and Alice Wallenberg Foundation, the dBrain collaborative project at Digital Futures at KTH, the Strategic Support grant of the Digitalisation Platform at KTH, and by the Data Driven Life Science (DDLS) program funded by the Knut and Alice Wallenberg Foundation.


\section{Introduction}
While Topological Data Analysis (TDA) has historically focused on studying the global shape of data, persistent homology has since grown to provide popular techniques for incorporating both global topological features and local geometry into data analysis pipelines \cite{adams2021topology}. Through the lens of persistent homology, global topological features can be encoded by long bars in a barcode decomposition of the persistence module, while local geometric features are characterized by short bars in the barcode. Indeed, both the information of long bars and short bars in the barcode \cite{bendich2016persistent,hiraoka2016hierarchical}, as well as their location along the filtration scale \cite{stolz2017persistent,chacholski2020metrics, agerberg2021supervised}, turn out to be relevant in data analysis tasks.
Introduced to persistent homology in \cite{cohen2010lipschitz}, Wasserstein distances offer a way to determine a trade-off between global and local features in persistence. Such distances have been widely used in applications and have been studied both from a combinatorial perspective and more recently with an algebraic approach \cite{bubenik2022exact, skraba2020wasserstein}.
Wasserstein distances are parametrized by two parameters in $[1,\infty]$, commonly fixed to the values of $1$, $2$, and $\infty$. 
One of the aims of this article is to define a richer family of parametrized Wasserstein distances where, in addition to standard parameters  determining sensitivity to short bars globally in the parameter space, a \emph{contour} is introduced to locally weight different parts of the parameter space. 
We propose that the optimal parameter values for a particular task should be learned in a machine learning context. Our contribution is part of more general efforts of identifying parametrized families of metrics and invariants for persistence \cite{bubenik2015metrics,scolamiero2017multidimensional,hofer2017deep,zhao2019learning,carriere2020perslay}. 

\paragraph{Algebraic Wasserstein distances.}
The study of algebraic distances between persistence modules is an active research direction in TDA, as demonstrated by the recent works on amplitudes  \cite{giunti2021amplitudes} and exact weights \cite{bubenik2022exact}. 
In this article we provide a new proof that the $p$-norm of a persistence module, introduced in \cite{skraba2020wasserstein}, defines a pseudometric for all $p\in [1,\infty]$. While \cite{skraba2020wasserstein} constructs a correspondence between the pseudometric induced by the $p$-norm and the Wasserstein distance between persistence diagrams, 
our proof shows that the $p$-norm determines a noise system \cite{scolamiero2017multidimensional} and therefore an induced pseudometric.
Our approach easily generalizes to define new pseudometrics on persistence modules, as for example the pseudometrics $d^q_{\ns^{p,C}}$ that combine $p$-norms with contours, effectively used as a reparametrization of the parameter space $[0,\infty)$. From a technical perspective, our framework requires to prove the axioms of noise systems without assuming that the $p$-norm induces a pseudometric (including the triangular inequality property) but rather studying how the $p$-norm interacts with monomorphisms, epimorphism, and short exact sequences. Among the axioms of noise systems, the one on short exact sequences (Lemma \ref{lem:ns_ses2}) is difficult to prove with our assumptions and to this purpose we introduce bar-to-bar morphisms, explained below.

It is  interesting to see that  Wasserstein distances fit in the noise system framework, as they are fundamentally different from noise systems that have been studied from a computational perspective  so far.
In fact, algorithms for the computation of stable ranks (that can be seen as vectorizations of persistence modules depending on the noise system) were only developed for so called \emph{simple noise systems}  \cite{gafvert2017stable,chacholski2020metrics}. 
These noise systems have the extra property of being closed under direct sums, and can intuitively be thought of as being sensitive only to the longest bars, which leads to $L^{\infty}$-type distances.
The noise systems associated with algebraic Wasserstein distances for $p<\infty$ are of a different nature, and in particular they are not closed under direct sums.

From a practical and computational perspective, combinatorial distances between persistence diagrams are more straightforward to compute than algebraic distances between persistence modules.
The combinatorial $(p, C)$-Wasserstein 
distances associated to  $d^p_{\ns^{p,C}}$  in Section \ref{subsec:alg_and_comb_WpC} offer a convenient way to compute contour distances and the combination of contour and Wasserstein distances between persistence modules, relying on the already developed computational machinery for Bottleneck $(p=\infty)$ and Wasserstein distances between persistence diagrams. 
In this article, however, our focus is not on the computation of the Wasserstein distance between two given persistence modules, but on invariants called Wasserstein stable ranks defined and computed using the distances.

\paragraph{Bar-to-bar morphisms.} The approach carried out in this article for proving that $p$-norms of persistence modules satisfy the axioms of noise systems relies on comparing monomorphisms (resp.\ epimorphisms) between persistence modules and so-called \emph{bar-to-bar} monomorphisms (resp.\ epimorphisms) between the same persistence modules.
Intuitively, in a bar-to-bar morphism (see Definition \ref{def:bartobar-morph}) every bar in the barcode decomposition of the domain maps non-trivially to at most one bar in the barcode decomposition of the codomain. 
Bar-to-bar morphisms are thus much simpler than general morphisms of persistence modules, and we show that they can be used to effectively reduce algebraic problems to easier problems of combinatorial nature. In particular, an important problem related to the definition and construction of algebraic distances is the minimization of kernels and cokernels of morphisms (see e.g.\ Definition \ref{def:dpS}) with respect to a chosen notion of ``size'', which in this article is the $p$-norm of persistence modules (Section \ref{subsec:pnorms}) or a more general notion of norm combining $p$-norms and contours (Definition \ref{def:pCnorm}).
Our main theoretical results Theorem~\ref{thm:mono-bar} and Theorem~\ref{thm:epi-bar} state that for any monomorphism (resp.\ epimorphism) between two persistence  modules  there exists a bar-to-bar monomorphism (resp.\ epimorphism)
between the same persistence modules whose cokernel (resp.\ kernel) has smaller or equal norm.

Various types of bar-to-bar morphisms can be constructed. For example, as we observe in Section \ref{subsec:canonical}, there are bar-to-bar monomorphisms and epimorphisms associated with the induced matchings of \cite{bauer2015induced}. Bar-to-bar morphisms are however more general then the induced matchings, and are also fundamentally different from other notions of matchings, such as the sub-barcode matchings of \cite{chubet2022theory}.
However, our bar-to-bar morphisms can be used as a tool to prove that the monomorphism (resp.\ epimorphism) associated to the induced matching has the cokernel (resp.\ kernel) with minimal $p$-norm among all monomorphisms (resp.\ epimorphisms) with the same domain and codomain.
(Corollary \ref{coro:mono-canonical} and Corollary \ref{coro:epi-canonical}). 

Several other key results of this article are proven leveraging Theorem~\ref{thm:mono-bar} and Theorem~\ref{thm:epi-bar}. For example, we use these theorems to show that $p$-norms of persistence modules satisfy the  axiom of noise systems on short exact sequences (Lemma \ref{lem:ns_ses2}). 
We also use our main results of Section \ref{sec:mono_pnorm} to prove a low-rank approximation result for persistence modules, analogous for example to the Eckart-Young-Mirsky theorem in the context of matrices (compare e.g.\ with \cite[Sect.\ 1]{golub1987generalization} and references therein), where the notion of rank we use for persistence modules is the number of bars in the barcode decomposition. Given a persistence module $X$ of rank $k$, for every $r\le k$ we identify a persistence module of rank $r$ that is closest to $X$ in algebraic Wasserstein distance, and we express its distance from $X$ (Proposition \ref{prop:d_X_deleting_bars} and Proposition \ref{prop:closest_rank_C}).
These results and their generalization (Proposition \ref{prop:closest_rank_C}) allow us to compute Wasserstein stable ranks (see Proposition \ref{prop:computing_sr}), the invariants that we introduce in this article.

\paragraph{Wasserstein stable ranks, a class of learnable vectorizations.} The computation of Wasserstein distances between persistence modules remains  expensive despite recent progress \cite{kerber2017geometry}, and the space of persistent modules is not directly amenable to statistical methods and machine learning. For these reasons, feature maps from persistence modules or diagrams have become an important component of the TDA machine learning pipeline. These techniques introduce a map between the space of persistence modules and a vector space where statistical and machine learning methods are well-developed. 
We propose a new class of feature maps, directly related to the Wasserstein distances $d^q_{\ns^{p,C}}$ between persistence modules, and with interpretable, learnable parameters.
Having fixed a pseudometric in the family of Wasserstein distances $d^q_{\ns^{p,C}}$, the Wasserstein stable rank of a persistence module with respect to the chosen pseudometric can be explicitly computed with a formula (Proposition \ref{prop:computing_sr}) derived from our results on monomorphisms and epimorphisms. The computational complexity of determining the Wasserstein stable rank is $O(n \log n)$ in the number $n$ of bars of a persistence module.

A parametrized family of stable ranks can be obtained by varying the Wasserstein distances, opening up for the possibility to tune parameters for a particular task, resulting in feature maps that focus on the discriminative aspects of the persistence modules in a dataset. Previous learnable feature maps \cite{hofer2017deep, carriere2020perslay, reinauer2021persformer} make the choice of \textit{expressiveness} (being able to learn any arbitrary function on the space of persistence modules) over \textit{stability} (learning a function under the constraint that it is robust to perturbations of the input). Moreover, since the methods are often parametrized by complex neural networks, it is difficult to compare and interpret parametrizations learned for different tasks. Our Wasserstein stable ranks are stable by construction. More precisely, the interleaving distance between Wasserstein stable ranks is  $1$-Lipschitz with respect to the corresponding Wasserstein distance used in its construction. Similarly to Wasserstein stable ranks, we also provide a simple formula for computing the interleaving distance between them at the cost of $O(n \log n)$ in the maximum number  of bars in the two persistence modules we are comparing.

We use a metric learning framework to learn an optimal parametrization for a problem at hand, observe that a better model can be obtained by jointly optimizing the parameters $p$ and the ones related to the contour $C$ and illustrate that the output can be readily interpreted in terms of the learned parametrization focusing on e.g.\ global/local features or various parts of the filtration scale. The methods are demonstrated on a synthetic and a real-world datasets.

\paragraph{Outline of the paper.}  
Section \ref{sec:prelim} contains background material.
In Section \ref{sec:mono_pnorm} we prove results on the $p$-norm of the cokernel of a monomorphism and, dually, of the kernel of an epimorphism of persistence modules. 
Section \ref{sec:Wass_noise} is a study of Wasserstein distances and their generalizations involving contours in the framework of noise systems.
In Section \ref{sec:stable_rank} we compute Wasserstein stable ranks and interleaving distances between them, which we use to formulate a metric learning problem.
In Section \ref{sec:examples_analyses} we illustrate the use of Wasserstein stable ranks on synthetic and real-world data, learning optimal parameters of algebraic Wasserstein distances.


\section{Preliminaries}
\label{sec:prelim}

\subsection{Persistence modules and persistent homology}
\label{subsec:persmod}
Let $[0,\infty )$ denote the totally ordered set of nonnegative real numbers, regarded as the category induced by the order structure. We consider an arbitrary fixed field $K$ and denote by $\Vect_\mathit{K}$ the category of finite dimensional vector spaces over $K$. A \textbf{persistence module over $K$} is a functor $X\colon [0,\infty ) \to \Vect_\mathit{K}$. Explicitly, $X$ consists of a collection of finite dimensional vector spaces $X_t$ for all $t$ in $[0,\infty)$, together with a collection of linear functions $X_{s\le t}\colon X_s \to X_t$, called \textbf{transition functions}, for all $s\le t$ in $[0,\infty)$, such that  $X_{s\le t} X_{r\le s}=X_{r\le t}$ for all $r\le s\le t$, and $X_{t\le t}$ is the identity function on $X_t$ for all $t$ in $[0,\infty)$.
A \textbf{morphism} or natural transformation $f\colon X\to Y$ between two persistence modules $X$ and $Y$ is a collection of linear functions $f_t\colon X_t \to Y_t$, for all $t$ in $[0,\infty )$, such that $f_t X_{s\le t}= Y_{s\le t} f_s$ for all $s\le t$ in $[0,\infty )$.

A persistence module $X$ is \textbf{tame} if there exist real numbers $0=t_0<t_1<\cdots <t_k$ such that the transition function $X_{s\le t}$ is a non-isomorphism only if $s<t_i \le t$ for some $i\in \{ 1,\ldots ,k\}$.  
We denote by $\Tame$ the category of tame persistence modules and morphisms between them. The class of objects of this category will be denoted by $\Tame$ as well.

\begin{convention}
In this article we always work in the category of tame persistence modules over a fixed field $K$. For brevity the term persistence module will be used to refer to tame persistence modules over $K$.
\end{convention}

A morphism $f\colon X\to Y$ in $\Tame$ is a monomorphism (respectively, an epimorphism or isomorphism) if the linear functions $f_t\colon X_t \to Y_t$ are monomorphisms (respectively,  epimorphisms or isomorphisms) of vector spaces, for all $t$ in $[0,\infty )$. 
Kernels, cokernels and direct sums in $\Tame$ are defined componentwise. For example, for any persistence modules $X$ and $Y$, the direct sum $X\oplus Y$ is the persistence module defined by $(X\oplus Y)_t = X_t\oplus Y_t$ and  $(X\oplus Y)_{s\le t} = X_{s\le t}\oplus Y_{s\le t}$, for all $s\le t$ in $[0,\infty )$. The zero persistence module or \textbf{zero module}, i.e., the functor identically equal to the zero vector space on objects,  will be denoted by $0$.

Let $a<b$ in $[0,\infty]$. We denote by $K(a,b)$ the persistence module defined as follows: for any $t$ in $[0,\infty )$,
\begin{equation*}
K(a,b)_t \coloneqq   \begin{cases}
K &\text{if $a \le t < b$}\\
0 &\text{otherwise} ,
\end{cases}
\end{equation*}
and for any $s\le t$ in $[0,\infty )$,
\begin{equation*}
K(a,b)_{s\le t} \coloneqq   \begin{cases}
\id_K &\text{if $K(a,b)_s= K = K(a,b)_t$}\\
0 &\text{otherwise} .
\end{cases}
\end{equation*}
We call $K(a,b)$ the \textbf{bar} (or interval module) with \textbf{start-point} $a$ and \textbf{end-point} $b$. We say that the bar $K(a,b)$ is \textbf{infinite} if $b=\infty$ and \textbf{finite} otherwise. We say that the left-closed, right-open interval $[a,b)$ in $[0,\infty)$ is the \textbf{support} of the bar $K(a,b)$.
As an easy consequence of naturality, a morphism $f\colon  K(a_1,b_1)\to K(a_2,b_2)$ between bars can be nonzero (i.e.\ have some component $f_a$ different from the zero map) only if $a_2\le a_1 < b_2 \le b_1$. In this case, $\ker f$ is isomorphic to $K(b_2,b_1)$ if $b_2< b_1$, and is zero otherwise, and $\coker f$ is isomorphic to $K(a_2,a_1)$ if $a_2< a_1$, and is zero otherwise.

A persistence module is \textbf{indecomposable} if, whenever it is isomorphic to a direct sum $Y\oplus Z$ with $Y$ and $Z$ in $\Tame$, either $Y=0$ or $Z=0$. Bars are indecomposable and, as the following fundamental result implies, any indecomposable in $\Tame$ is isomorphic to a bar. We refer the reader to \cite{chazal2016structure} for more details on the algebraic structure of persistence modules. 

\begin{thm}[Structure of persistence modules]
\label{thm:barcode_decomp}
 Any (tame) persistence module $X$ is isomorphic to a finite direct sum of bars of the form $\bigoplus_{i=1}^k K(a_i,b_i)$, with $a_i<b_i$ in $[0,\infty]$ for every $i\in \{1,\ldots ,k\}$. This decomposition is unique up to permutation: if $X\cong \bigoplus_{i=1}^k K(a_i,b_i) \cong \bigoplus_{j=1}^\ell K(c_j,d_j)$, then $k=\ell$ and there exists a permutation $\sigma$ on $\{1,\ldots ,k\}$ such that $a_i = c_{\sigma (i)}$ and $b_i = d_{\sigma (i)}$, for every $i\in \{1,\ldots ,k\}$. 
\end{thm}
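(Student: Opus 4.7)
The plan is to reduce the statement to the classical decomposition theorem for representations of a type $A$ quiver, and then to handle uniqueness via a Krull--Schmidt style argument that rests on the locality of the endomorphism ring of a bar.

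First, I would exploit tameness to replace the continuous parameter $[0,\infty )$ by a finite zig-zag of vector spaces. By definition there exist $0 = t_0 < t_1 < \cdots < t_k$ such that $X_{s\le t}$ is an isomorphism whenever $s,t$ lie in the same interval $[t_{i-1},t_i)$ (with $[t_k,\infty)$ for the last one). Choose sample points $s_i \in [t_i,t_{i+1})$, with $s_k \in [t_k,\infty )$, and record the finite diagram
\[
X_{s_0}\longrightarrow X_{s_1}\longrightarrow \cdots \longrightarrow X_{s_k}
\]
of the transition maps of $X$. Any such finite diagram lifts back to a tame persistence module (constant on each plateau, with the given map at each jump), and the natural transformation given by the obvious identities on plateaus establishes that $X$ is isomorphic to the lift of its associated diagram. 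Thus the classification of tame persistence modules up to isomorphism reduces to the classification of finite diagrams of the above shape.

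Second, such a diagram is a finite-dimensional representation of the $A_{k+1}$ quiver with all arrows pointing right. I would prove, by induction on $k$, that every such representation is a finite direct sum of interval representations $M_{[i,j]}$ (which are $K$ at positions $i,\ldots ,j$, zero elsewhere, with identity maps between consecutive copies of $K$). The inductive step factors a single linear map $f\colon V\to W$ using a decomposition $V = \ker f \oplus V'$ and $W = \operatorname{im} f \oplus W'$, which splits off bars that end, respectively start, at that position; then one applies the inductive hypothesis to the remaining shorter diagrams. Lifting each $M_{[i,j]}$ back to a persistence module yields exactly the bar $K(t_i, t_{j+1})$, with the convention $t_{k+1}=\infty$, so $X \cong \bigoplus_{i=1}^\ell K(a_i,b_i)$ for some $a_i<b_i$ in $[0,\infty ]$.

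Third, for the uniqueness statement I would invoke the Krull--Schmidt theorem. Each bar $K(a,b)$ has endomorphism ring isomorphic to $K$: any endomorphism $\varphi\colon K(a,b)\to K(a,b)$ is determined by a single scalar $\lambda \in K$ acting on every nontrivial component (by naturality with respect to the identity transition maps on $[a,b)$). Since $K$ is a field, this endomorphism ring is local, so the direct sum decomposition of $X$ into indecomposable bars is unique up to permutation and isomorphism of summands. Two bars $K(a,b)$ and $K(c,d)$ are isomorphic only if $(a,b)=(c,d)$, as can be read off from the supports where the modules are nonzero. This yields the desired permutation $\sigma$.

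The main obstacle is the inductive splitting step at the quiver level, where one must make the choices of complements compatible across several consecutive arrows; the bookkeeping around the tameness reduction and the locality of $\mathrm{End}(K(a,b))$ is then routine. Alternatively, one could bypass Krull--Schmidt with a direct counting argument comparing dimensions $\dim X_t$ and ranks of transition maps $X_{s\le t}$ across the two proposed decompositions to recover the multiset of endpoints $(a_i,b_i)$, but invoking Krull--Schmidt is cleaner.
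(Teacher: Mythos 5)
The paper does not actually prove Theorem~\ref{thm:barcode_decomp}: it is stated as background and the reader is referred to the literature (\cite{chazal2016structure}), so there is no internal proof to compare yours against. Your route is the classical one, and two of its three steps are sound as sketched. The tameness reduction works: sampling one point per plateau gives an equioriented $A_{k+1}$ representation, and the comparison map from the lifted (plateau-constant) module to $X$ is a natural isomorphism precisely because tameness forces all within-plateau transitions to be isomorphisms; lifting interval representations gives bars $K(t_i,t_{j+1})$ with the correct half-open supports, including $t_{j+1}=\infty$. The uniqueness half is also fine: $\mathrm{End}(K(a,b))\cong K$ is local, each object is a finite sum of such indecomposables once existence is known, so Azumaya--Krull--Schmidt applies, and two bars are isomorphic only if their supports, hence endpoints, coincide.

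The genuine gap is the existence step for $A_{k+1}$, and it is more than bookkeeping. Splitting a single arrow $f\colon V\to W$ as $V=\ker f\oplus V'$, $W=\operatorname{im} f\oplus W'$ does not split the representation: the incoming map into $V$ has no reason to respect the chosen complement $V'$ or $\ker f$, so nothing has been detached and the ``remaining shorter diagrams'' you want to recurse on are not yet defined. The standard repair is to induct on the number of vertices: decompose the truncation to the first $k$ vertices by the inductive hypothesis, then prove a basis-change lemma showing that the last map can be modified, by column operations compatible with the interval decomposition, so that each interval generator maps either to zero or to a distinct basis vector of the last space --- that is, so that the map becomes bar-to-bar in the sense of Definition~\ref{def:bartobar-morph}; the matrix-reduction arguments of Section~\ref{sec:mono_pnorm} (Algorithm~\ref{alg:btb}, Proposition~\ref{P:PermutationInequality}) are exactly this kind of argument and could be adapted. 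Either write out that lemma explicitly or replace the step by a citation to the decomposition theorem for representations of $A_n$ (or to \cite{chazal2016structure}); as it stands, the existence half of your proof rests on an unproved claim.
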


A decomposition of a persistence module $X$ as a direct sum of bars as in Theorem \ref{thm:barcode_decomp} is called a \textbf{barcode decomposition} of $X$. In this article, we will occasionally denote a barcode decomposition of $X$ by $\bigoplus_{i=1}^k X_i$ when we do not need an explicit notation for the bars' start- and end-points. The number $k$ of bars in any barcode decomposition of $X$ is called the \textbf{rank} of $X$, denoted by $\rank (X)$. 

Given a persistence module $X$, consider an element $x\in X_a$ for some $a$ in $[0,\infty )$, and let $b\coloneqq \sup \{t \in [a,\infty) \mid X_{a\le t}(x) \ne 0 \}$ in $[a,\infty]$. The element $x$ is called a \textbf{generator} of $X$ if the morphism $g\colon  K(a,b)\to X$ defined by $g_a (1)=x$ is such that the composition $r g$ with some morphism $r\colon  X\to K(a,b)$ is the identity on $K(a,b)$. We call $K(a,b)$ the \textbf{bar generated by} $x$, and we observe that it is a direct summand of $X$. We call a collection of elements $\{ x_i \in X_{a_i}\}_{i=1}^k$ a \textbf{set of generators} of $X$ if each $x_i$ generates a bar $K(a_i,b_i)$ and the morphisms $g_i \colon K(a_i,b_i)\to X$ defined by $x_i$ induce an isomorphism $\bigoplus_{i=1}^k K(a_i,b_i)\to X$.

As we will use basic homological algebra methods in $\Tame$, we remark that infinite bars $K(a,\infty)$, for all $a$ in $[0,\infty )$, are free in $\Tame$, and that the notions of free and projective coincide in $\Tame$ (see \cite{bubenik2021homological} for details). 
Any bar $K(a,b)$ with $b<\infty$ admits a minimal free resolution of the form $0\to K(b,\infty)\to K(a,\infty)\to K(a,b) \to 0$.

\begin{rmk}
We note that $\rank (X)$ can be viewed as a classical homological invariant corresponding to the number of generators in a minimal free resolution of $X$, which yields an alternative definition of the rank that is applicable to multiparameter persistence modules \cite{scolamiero2017multidimensional}. 
\end{rmk}

Lastly, let us briefly comment on a set theoretical detail regarding the category $\Tame$.
In $\Tame$, the class of isomorphism classes of objects is a set, as a consequence of Theorem \ref{thm:barcode_decomp}.  
In this article, we consider some class functions defined on $\Tame$, and we occasionally refer to them simply as functions for brevity. Since all class functions on $\Tame$ we consider are constant on isomorphism classes of objects, they can be regarded as proper functions defined on the set of isomorphism classes of persistence modules.

\subsection{Contours}
\label{subsec:contours}
Contours can be thought of as describing coherent ways to ``flow'' across the parameter space $[0,\infty )$ of persistence modules. 
In this article, we call \textbf{contour} a function $C\colon [0,\infty)\times [0,\infty) \to [0,\infty)$ such that, for all $a,b, \varepsilon , \tau$ in $[0,\infty)$, the following inequalities hold:
\begin{enumerate}
    \item if $a\le b$ and $\varepsilon \le \tau$, then $C(a,\varepsilon)\le C(b,\tau)$;
    \item $a \le C(a,0)$;
    \item $C(C(a,\varepsilon), \tau)\le C(a,\varepsilon +\tau)$.
\end{enumerate}

In \cite{gafvert2017stable} contours are defined in the case of $n$-parameter persistence modules. Contours are further studied for $1$-parameter persistence in \cite{chacholski2020metrics}, where several concrete examples are given. In \cite{chacholski2020metrics}, the definition of contour is slightly more general than ours; for example, $C(a,\varepsilon)$ can take the value $\infty$.
Similar notions to contours appear in the literature by the name of \emph{superlinear families of tranlations} \cite{bubenik2015metrics} and \emph{flows on posets} \cite{deSilva2018theory}.

A contour $C$ is called an \textbf{action} if the inequalities of (2.) and (3.) are equalities, that is, if $a = C(a,0)$ and $C(C(a,\varepsilon), \tau) = C(a,\varepsilon +\tau)$, for all $a,\varepsilon,\tau$.
A contour $C$ is \textbf{regular} \cite{chacholski2020metrics} if the following conditions hold: 
\begin{itemize}
    \item $C(-,\varepsilon)\colon  [0,\infty ) \to [0,\infty )$ is a monomorphism for all $\varepsilon \in [0,\infty)$;
    \item $C(a,-)\colon  [0,\infty) \to [0,\infty)$ is a monomorphism whose image is $[a,\infty)$, for all $a \in [0,\infty)$.
\end{itemize}
The second condition of regular contours ensures that $C(a,0)=a$, for any $a$ in $[0,\infty )$, and that $C$ is strictly increasing in the second variable:  $C(a,\varepsilon)< C(a,\tau)$ whenever $\varepsilon < \tau$, for any $a$ in $[0,\infty )$. For brevity, we call a contour $C$ a \textbf{regular action} if it is both regular and an action.

Let $C$ be a regular contour. For all $a \in [0,\infty)$, we define the function $\ell(a,-)$ to be the inverse of the function $C(a,-)\colon [0,\infty )\to [a,\infty )$, that is, $\ell (a,b) = C(a, -)^{-1}(b)$ for any $b\in [a,\infty)$, and we set $\ell(a,\infty)=\infty$. We call $\ell$ the \textbf{lifetime function} associated with $C$.
We observe that, since regular contours are injective functions in the second variable, $\ell(a,b)$ is well-defined for every pair $a \le b$. Throughout the article, the \textbf{lifetime of a bar $K(a,b)$ with respect to a contour $C$} is the value $\ell(a,b)$ of the lifetime function associated with $C$.

As a first example of contour we consider the \textbf{standard contour}, i.e.\ the function $D$ defined by $D(a,\varepsilon)=a+\varepsilon$, for every $a,\varepsilon \in [0,\infty)$. Informally, the standard contour describes the most uniform way to flow in the parameter space $[0,\infty )$ of a persistence module, linearly with unitary speed. 
We now introduce a large family of contours, called \textbf{integral contours of distance type} \cite{chacholski2020metrics,agerberg2021supervised}, parametrized by certain real-valued functions. Let $f\colon [0,\infty ) \to (0,\infty )$ be a Lebesgue measurable function, called here a \textbf{density}. For every $a,\varepsilon \in [0,\infty)$, let $D_f (a,\varepsilon )$ be the real number in $[a,\infty )$ such that
\begin{equation*}
\varepsilon = \int_{a}^{D_f (a,\varepsilon )} f(x) \,dx ,
\end{equation*}
which is uniquely defined since $f$ takes strictly positive values.  
The function $D_f\colon [0,\infty)\times [0,\infty) \to [0,\infty )$ is a contour; moreover, it is regular and an action. We observe that, if the density $f$ is the constant function with value $1$, the distance type contour $D_1$ coincides with the standard contour.

\subsection{Noise systems}
\label{subsec:noise_systems_distances}
Noise systems provide a way to quantify the size of persistence modules and to produce pseudometrics on $\Tame$ by comparing their sizes  \cite{scolamiero2017multidimensional}.
A \textbf{noise system} on $\Tame$ is a sequence $\ns = \{ \ns_{\varepsilon} \}_{\varepsilon \in [0,\infty)}$ of subclasses of $\Tame$ such that:
\begin{itemize}
\item $0 \in \ns_{\varepsilon}$, for all $\varepsilon$,
\item $\ns_{\tau} \subseteq \ns_{\varepsilon}$ whenever $\tau \le \varepsilon$,
\item if $0\to X_0 \to X_1 \to X_2 \to 0$ is a short exact sequence in $\Tame$, then:
\begin{itemize}
    \item if $X_1 \in \ns_{\varepsilon}$, then $X_0, X_2 \in \ns_{\varepsilon}$, 
    \item if $X_0 \in \ns_{\varepsilon}$ and $X_2 \in \ns_{\tau}$, then $X_1 \in \ns_{\varepsilon + \tau}$.
\end{itemize}
\end{itemize}

Given a noise system $\ns = \{ \ns_{\varepsilon} \}_{\varepsilon \in [0,\infty)}$ it is natural to associate to each persistence module $X$ the smallest $\varepsilon$ such that $X\in \ns_{\varepsilon}$. This defines a class function $\alpha_{\ns}\colon  \Tame \rightarrow [0,\infty ]$ called in \cite{giunti2021amplitudes} the \textbf{amplitude} associated to $\ns$.  

A noise system $\ns = \{ \ns_{\varepsilon} \}_{\varepsilon \in [0,\infty)}$ is \textbf{closed under direct sums} if $X\oplus Y \in \ns_{\varepsilon}$ whenever $X, Y \in \ns_{\varepsilon}$, for every $\varepsilon \in [0,\infty)$. Contours (Section \ref{subsec:contours}) provide examples of noise systems satisfying this property. Given a contour $C$ and any $\varepsilon \in [0,\infty )$, let 
\begin{equation*}
\ns_{\varepsilon} \coloneqq  \{ X \in \Tame \mid X_{a \le C(a,\varepsilon)}=0 \text{ for all } a\in [0,\infty )  \}. 
\end{equation*}
It is proved in \cite[Prop.~9.4]{gafvert2017stable} that the sequence $\{ \ns_{\varepsilon} \}_{\varepsilon \in [0,\infty)}$ defined in this way is a noise system closed under direct sums. In particular, the noise system induced by the standard contour has components
\begin{equation*}
\ns_{\varepsilon} \coloneqq  \{ X \in \Tame \mid X_{a \le a+\varepsilon }=0 \text{ for all } a\in [0,\infty)\},
\end{equation*}
and coincides with the \textbf{standard noise system} introduced in \cite{scolamiero2017multidimensional}.

\subsection{Pseudometrics between persistence modules}
\label{subsec:noise_induced_distances}
In this article, we call (extended) \textbf{pseudometric} on $\Tame$ a class function $d$ assigning to any pair of persistence modules $X,Y$ in $\Tame$ an element $d(X,Y)\in [0,\infty]$ such that the following conditions hold for any $X,Y,Z$: 
\begin{itemize}
\item $d(X,Y) =d(Y,X)$,
\item $d(X,Y) =0$ whenever $X$ is isomorphic to $Y$,
\item $d(X,Z)\le d(X,Y)+d(Y,Z)$. 
\end{itemize}
The third condition, known as the triangle inequality, combined with the second one yields $d(X,Y)=d(X',Y')$ whenever $X\cong X'$ and $Y\cong Y'$. This definition of pseudometric coincides with Definition $3.3$ in \cite{bubenik2022exact} when considering the category $\Tame$.

We now briefly explain how noise systems yield pseudometrics on $\Tame$.
Let $\ns$ be a noise system on $\Tame$. For any $\varepsilon \in [0,\infty)$, we say that two persistence modules $X$ and $Y$ are $\varepsilon$-\textbf{close} if there exists a persistence module $Z$ and a pair of morphisms $X \xleftarrow{f} Z \xrightarrow{g} Y$ such that
\begin{equation*}
\ker f \in \ns_{\varepsilon_1}, \quad \coker f \in \ns_{\varepsilon_2}, \quad
\ker g \in \ns_{\varepsilon_3}, \quad \coker g \in \ns_{\varepsilon_4}, 
\end{equation*}
for some $\varepsilon_1, \varepsilon_2, \varepsilon_3, \varepsilon_4 \in [0, \infty)$ 
such that $\varepsilon_1 + \varepsilon_2 + \varepsilon_3 + \varepsilon_4 \le \varepsilon$. Define
\begin{equation*}
d_{\ns}(X,Y) \= \inf \left\{ \varepsilon \in [0,\infty) \mid \text{$X$ and $Y$ are $\varepsilon$-close} \right\} ,
\end{equation*}
adopting the convention $\inf \varnothing = \infty$. As shown in  \cite[Prop.~8.7]{scolamiero2017multidimensional}, $d_{\ns}$ is a pseudometric on $\Tame$. 

We remark that the pseudometric $d_{\ns}$ associated with the standard noise system is equivalent to the interleaving distance \cite{lesnick2015theory}, as proved by \cite[Prop. 12.2]{gafvert2017stable}.

\subsection{Hierarchical stabilization and stable rank}
\label{subsec:stabilization}
In the context of TDA, \textbf{hierarchical stabilization} is a method to convert a discrete invariant of persistence modules into a stable invariant suitable for data analysis. This technique has been studied in  \cite{scolamiero2017multidimensional,gafvert2017stable} in the case of multiparameter persistence modules, and has been further investigated in \cite{chacholski2020metrics} in the case of one-parameter persistence. 
Hierarchical stabilization has a very general formulation, which allows for several choices of discrete invariants, and in principle is not restricted to categories of persistence modules. For the hierarchical stabilization of the rank, also called stable rank, some computational methods have been developed \cite{gafvert2017stable,chacholski2020metrics}. In this article we will restrict our attention to the stable rank and further develop its computation.

Besides choosing a discrete invariant, hierarchical stabilization requires the choice of a pseudometric between persistence modules, which plays an active role in calculating the corresponding stable invariant. 
Consider the rank of a persistence module (Section \ref{subsec:persmod}) as a class function $\rank\colon  \Tame \to \mathbb{N}$ mapping any persistence module $X$ to the natural number $\rank (X)$.

\begin{defn}
\label{def:stable_rank}    
Given a pseudometric $d$ on $\Tame$ (Section \ref{subsec:noise_induced_distances}), the \textbf{stable rank} of a persistence module $X$ with respect to the pseudometric $d$ is the function $\widehat{\rank}_d (X)\colon  [0,\infty )\to [0,\infty)$ defined, for all $t\in [0,\infty )$, by 
\begin{equation*}
\widehat{\rank}_d (X)(t) \coloneqq  \min \{ \rank (Y) \mid Y\in \Tame \text{ and } d(X,Y)\le t \} .
\end{equation*}
\end{defn}
We observe that the function $\widehat{\rank}_d (X)$ is non-increasing and takes values in $\mathbb{N}$, so it belongs to the set $\mathcal{M}$ of Lebesgue measurable functions $[0,\infty) \to [0,\infty)$. 

To illustrate the stability of the invariant $\widehat{\rank}_d$, we consider a pseudometric $d_{\bowtie}$ on $\mathcal{M}$, called the  \textbf{interleaving distance}, defined for all $f,g\in \mathcal{M}$ by
\begin{equation*}
d_{\bowtie} (f,g) \coloneqq  \inf \{ \varepsilon \in [0,\infty ) \mid  f(t)\ge g(t+\varepsilon) \text{ and } g(t)\ge f(t+\varepsilon), \text{ for all } t\in [0,\infty ) \} ,
\end{equation*}
setting by convention $\inf \varnothing = \infty$. 
The stable rank then satisfies the following Lipschitz condition.

\begin{prop}[\cite{scolamiero2017multidimensional}]
\label{prop:stability_sr}  
Let $d$ be a pseudometric on $\Tame$, and let $X,Y$ be persistence modules. Then
$d(X,Y) \ge d_{\bowtie} (\widehat{\rank}_d (X),\widehat{\rank}_d (Y))$.
\end{prop}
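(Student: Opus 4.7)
The plan is to unfold both sides of the claimed inequality and reduce it to the triangle inequality for $d$. Specifically, if we can show that for every $t \in [0,\infty)$ and every $\varepsilon \ge d(X,Y)$ the two inequalities
\[
\widehat{\rank}_d(X)(t) \ge \widehat{\rank}_d(Y)(t+\varepsilon), \qquad \widehat{\rank}_d(Y)(t) \ge \widehat{\rank}_d(X)(t+\varepsilon)
\]
both hold, then $\varepsilon$ lies in the set over which the infimum defining $d_{\bowtie}$ is taken, and the desired bound $d_{\bowtie}(\widehat{\rank}_d(X), \widehat{\rank}_d(Y)) \le d(X,Y)$ follows. The case $d(X,Y)=\infty$ is trivial, so I may assume the distance is finite.

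The main step is the comparison argument. Fix $t$ and $\varepsilon \ge d(X,Y)$. By Definition \ref{def:stable_rank}, the value $\widehat{\rank}_d(X)(t)$ is a genuine minimum (the relevant set is a nonempty subset of $\N$, nonempty because $X$ itself is a witness when $d(X,X)=0 \le t$), so I can choose a persistence module $Z$ with $d(X,Z)\le t$ and $\rank(Z) = \widehat{\rank}_d(X)(t)$. The triangle inequality for the pseudometric $d$ then gives
\[
d(Y,Z) \le d(Y,X) + d(X,Z) \le \varepsilon + t ,
\]
so $Z$ is a candidate in the minimum defining $\widehat{\rank}_d(Y)(t+\varepsilon)$, yielding $\widehat{\rank}_d(Y)(t+\varepsilon) \le \rank(Z) = \widehat{\rank}_d(X)(t)$. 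Swapping the roles of $X$ and $Y$ (using the symmetry $d(X,Y)=d(Y,X)$) proves the other inequality.

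Finally, taking the infimum over $\varepsilon \ge d(X,Y)$ yields the claim. There is no real obstacle in this argument: it is a direct consequence of the triangle inequality combined with the fact that the defining minimum in $\widehat{\rank}_d$ is attained. The only mildly delicate point worth stating explicitly is that one should use $\varepsilon = d(X,Y)$ itself (or any $\varepsilon > d(X,Y)$ and then pass to the limit), since $d_{\bowtie}$ is defined via an infimum and the inequalities above are preserved by shrinking $\varepsilon$ down to $d(X,Y)$.
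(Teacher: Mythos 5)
Your argument is correct and complete: since the set of ranks in Definition \ref{def:stable_rank} is a nonempty subset of $\N$, the minimum is attained by some $Z$, and the triangle inequality for $d$ then gives both interleaving inequalities already with $\varepsilon = d(X,Y)$ itself (when finite), so no limiting procedure is needed. The paper imports this proposition from \cite{scolamiero2017multidimensional} without reproducing a proof, and your argument is precisely the standard one behind that result.
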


\subsection{\texorpdfstring{$p$}{p}-norms}
\label{subsec:pnorms}
In this subsection, we briefly review properties of $p$-norms that are useful for our work. 
For $p\in [1,\infty ]$, the $p$-norm (also called $L^p$-norm) on $\mathbb{R}^n$ is the function $\left\| \cdot \right\|_p \colon  \mathbb{R}^n \to [0,\infty)$ defined, for each $x=(x_1, x_2, \ldots , x_n)\in \mathbb{R}^n$, by 
\begin{equation*}
\left\| x \right\|_p \=  \begin{cases}
\left( \sum_{i=1}^n \lvert x_i \rvert^{p} \right)^{\frac{1}{p}} &\text{for $p\in [1,\infty)$}\\
\max \{ \lvert x_i \rvert \}_{i\in \{1,\ldots ,n\} } &\text{for $p=\infty$} .
\end{cases}
\end{equation*}
We note that $\displaystyle \|x\|_\infty=\lim_{p\to\infty}\|x\|_p$, for all $x\in \mathbb{R}^n$.
The triangle inequality (or subadditivity condition)
$\left\| x+y \right\|_p \le \left\| x \right\|_p + \left\| y \right\|_p$, for all  $x,y\in \mathbb{R}^n$, is also referred to as Minkowski inequality.

A fundamental property of $p$-norms on $\mathbb{R}^n$ is the following: for $x\in \mathbb{R}^n$ and for $1\le p\le q\le \infty$, the inequalities
\begin{equation}
\label{eq:monotonicity_p}
\left\| x \right\|_{q} \le \left\| x \right\|_{p} \le n^{\left( \frac{1}{p} - \frac{1}{q}\right)}\left\| x \right\|_{q}
\end{equation}
hold and are sharp, where by convention we set $\frac{1}{\infty}=0$. We refer to the first inequality as the monotonicity property of $p$-norms.

The following elementary property of $p$-norms is useful in this work: for $p\in [1,\infty]$, if $x=(x_1, \ldots , x_n)\in \mathbb{R}^n$, $y=(y_1, \ldots , y_m)\in \mathbb{R}^m$ and $z=(x_1, \ldots , x_n, y_1, \ldots , y_m)\in \mathbb{R}^{n+m}$, then
\begin{equation}
\label{eq:concatenate_p}
\left\| \left( \left\| x \right\|_{p}, \left\| y \right\|_{p} \right) \right\|_{p} = \left\| z \right\|_{p} .
\end{equation}
Finally, let us also observe that $p$-norms are permutation invariant, and that they preserve the order on $[0,\infty)^n$, meaning that if $x\le y$ in $[0,\infty)^n$ according to the coordinate-wise order, then $\|x\|_p \le \|y\|_p$. 

In this article, we generally consider $p$-norms as functions from $[0,\infty]^n$ to $[0,\infty]$, extending the usual definition by setting $\| x\|_p =\infty$ whenever $x$ has some coordinate $x_i=\infty$. All properties stated above still hold with this definition.

Following \cite{skraba2020wasserstein}, we will consider  $p$-norms of persistence modules, whose definition relies on the barcode decomposition (Section \ref{subsec:persmod}). 
For $p\in [1,\infty]$, the $p$-norm of a persistence module $X$ having barcode decomposition $X\cong \bigoplus_{i=1}^k K(a_i, b_i)$ is defined by
\begin{equation*}
\left\| X\right\|_p \=  \begin{cases}
\left( \sum_{i=1}^k \lvert b_i - a_i \rvert^{p} \right)^{\frac{1}{p}} &\text{for $p\in [1,\infty)$}\\
\max \{ \lvert b_i - a_i \rvert \}_{i\in \{1,\ldots ,k\} } &\text{for $p=\infty$} .
\end{cases}
\end{equation*}

For $p\in [1,\infty]$ and $\varepsilon \in [0,\infty)$, the class of tame persistence modules with $p$-norm  smaller or equal to $\varepsilon$ is denoted by:
\begin{equation*}
\ns^{p}_{\varepsilon} \= \{ X \in \Tame \mid \left\| X \right\|_{p} \le \varepsilon \} ,
\end{equation*}
and we set $\ns^{p} \= \{ \ns^{p}_{\varepsilon} \}_{\varepsilon \in [0,\infty)}$.


\section{Monomorphisms, epimorphisms, and their \texorpdfstring{$p$}{p}-norms}
\label{sec:mono_pnorm}
In this section we introduce bar-to-bar morphisms between persistence modules (Definition~\ref{def:bartobar-morph}), which can  informally be described as morphisms such that every bar in the barcode decomposition of the domain maps non-trivially to at most one bar in the barcode decomposition of the codomain. Our aim is proving results (Theorem~\ref{thm:mono-bar} and Theorem~\ref{thm:epi-bar}) which compare monomorphisms and epimorphisms between two persistence modules to bar-to-bar  
monomorphisms and epimorphisms
between the same persistence modules. These results allow us to reduce algebraic problems to much simpler combinatorial problems, as shown for example in Corollary \ref{coro:mono-canonical} and Corollary \ref{coro:epi-canonical}.

\subsection{Free presentations of monomorphisms}
\label{sec:free_pres_mono}

Given a monomorphism $f\colon Z\hookrightarrow X$ between persistence modules, we want to determine the barcode decomposition of $\coker f$. We briefly describe a method that uses free resolutions of the persistence modules $Z$ and $X$. 

Consider the diagram 
\begin{equation*}
\begin{tikzcd}
0 & {R_Z} & {G_Z} & Z & 0 \\
0 & {R_X} & {G_X} & X & 0 \\
{} & {} & {} & {\coker f} & {}
\arrow["f", hook', from=1-4, to=2-4]
\arrow["{f_G}", from=1-3, to=2-3]
\arrow["{f_R}", from=1-2, to=2-2]
\arrow[from=1-1, to=1-2]
\arrow["i_Z", from=1-2, to=1-3]
\arrow["p_Z", from=1-3, to=1-4]
\arrow[from=1-4, to=1-5]
\arrow[from=2-1, to=2-2]
\arrow["i_X", from=2-2, to=2-3]
\arrow["p_X", from=2-3, to=2-4]
\arrow[from=2-4, to=2-5]
\arrow["q", two heads, from=2-4, to=3-4]
\end{tikzcd}
\end{equation*}
where the rows are (minimal) free resolutions of the persistence modules $Z$ and $X$ respectively, and $q$ denotes the canonical epimorphism. The given morphism $f$ induces a morphism $f_G\colon G_Z \to G_X$ between the modules of generators and a morphism $f_R\colon R_Z \to R_X$ between the modules of relations that make the diagram commutative (see e.g.\ \cite[Thm.~6.16]{rotman2009homological}). We have $\coker f \cong \coker([f_G \; i_X]\colon  G_Z \oplus R_X \xrightarrow{} G_X)$, where the morphism $[f_G \; i_X]$ sends $(z,r)\in G_Z \oplus R_X$ to $f_G (z) + i_X (r)$. The isomorphism of cokernels is easy to prove, for example observing that the image of the composition $q p_X$ is $\coker f$ given that both $q$ and $p_X$ are surjective and verifying via diagram chasing that its kernel coincides with the image of $[f_G \; i_X]\colon  G_Z \oplus R_X \xrightarrow{} G_X$.

In other words, we have a free presentation of $\coker f$
\[
G_Z \oplus R_X \xrightarrow{[f_G \; i_X]} G_X \twoheadrightarrow \coker f ,
\]
and we can use it to determine the barcode decomposition of $\coker f$. More precisely, observing that $\coker f$ is isomorphic to the homology at the middle term of the free chain complex
\[
G_Z \oplus R_X \xrightarrow{[f_G \; i_X]} G_X \longrightarrow 0 ,
\]
we can compute the barcode decomposition of $\coker f$ by using the persistent homology algorithm on a matrix $M_f$ representing the morphism $[f_G \; i_X]$, as we detail in Section \ref{subsec:main_mono}. The persistent homology algorithm determines ``pairings'' of the basis elements of $G_Z \oplus R_X$ with the basis elements of $G_X$, which corresponds to the start- and end-point pairs of the bars of $\coker f$.

In this section, we are interested in particular morphisms between persistence modules, which we call bar-to-bar morphisms.

\begin{defn}
\label{def:bartobar-morph}
A morphism $f\colon Z\to X$ of persistence modules is \textbf{bar-to-bar} if there are barcode decompositions $Z=\bigoplus_{i=1}^m Z_i$ and $X=\bigoplus_{j=1}^n X_j$ and there exist a subset $I\subseteq \{1,\ldots ,m\}$ and an injective function $\alpha \colon I\to \{1,\ldots, n\}$ such that 
\begin{equation}
\label{eq:bartobar_f}
f = \bigoplus_{i\in I} f_i \oplus \bigoplus_{i\in \{1,\ldots ,m\} \setminus I} g_i \oplus \bigoplus_{j\in \{1,\ldots ,n\} \setminus \alpha (I)} h_j , 
\end{equation}
where each $f_i \coloneqq  f\vert_{Z_i}$ is a nonzero morphism $Z_i \to X_{\alpha (i)}$, and where $g_i$ denotes the zero morphism $Z_i \to 0$ and $h_j$ denotes the zero morphism $0 \to X_j$.
\end{defn}

\begin{rmk}
\label{rmk:ker_coker_bartobar}
If $f$ is a bar-to-bar morphism as in (\ref{eq:bartobar_f}), then $\ker f$ and $\coker f$ are easily determined recalling the case of a morphism between two bars (see Section \ref{subsec:persmod}), namely:
\begin{equation*}
\ker f = \bigoplus_{i\in I} \ker f_i \oplus \bigoplus_{i\in \{1,\ldots ,m\} \setminus I} Z_i , \qquad 
\coker f =  \bigoplus_{i\in I} \coker f_i \oplus \bigoplus_{j\in \{1,\ldots ,n\} \setminus \alpha (I)} X_j .
\end{equation*}
Furthermore, if $f$ is a monomorphism, the fact that $\ker f$ vanishes implies that $I=\{1,\ldots ,m \}$, and the existence of the injective function $\alpha$ implies $m\le n$. Dually, $\alpha (I)=\{1,\ldots ,n \}$ and $n\le m$ if $f$ is an epimorphism.
\end{rmk}

The main result of this section is the following (Theorem~\ref{thm:mono-bar}): given any monomorphism $f\colon Z\hookrightarrow X$, there is a bar-to-bar monomorphism $f_b \colon  Z\hookrightarrow X$ such that $\| \coker f_b \|_p \le \| \coker f \|_p$ for any $p\in [1,\infty]$. A dual statement (Theorem~\ref{thm:epi-bar}) holds for kernels of epimorphisms.

\subsection{Finding monomorphisms with smaller cokernels}
\label{subsec:main_mono}
To prove our inequalities between $p$-norms of cokernels, we modify a strategy used in \cite[Sect.\ 7.1]{skraba2020wasserstein} to obtain new inequalities between $p$-norms of persistence modules, based on the rearrangement inequality (Theorem \ref{thm:rearr_ineq}) and on the comparison of pairings in certain barcode decompositions using the persistent homology algorithm.
For simplicity, we fix the field with two elements $\mathbb{F}_2$ as the base field in this subsection, but our results work for any base field.

Let $Z$ and $X$ be persistence modules and $f \colon Z \hookrightarrow X$ a monomorphism of persistence modules. Fix $\{z_i\}_{i = 1}^m$ and $\{x_j\}_{j = 1}^n$ sets of generators of $Z$ and $X$, respectively, and denote by $Z = \bigoplus_{i = 1}^m K(a_i^z, b_i^z)$ and $X = \bigoplus_{j = 1}^n K(a_j^x, b_j^x)$ the respective barcode decompositions. 
That is, for every $z_i$, $a_i^z$ is the \textbf{degree} of $z_i \in Z_{a_i^z}$ and $b_i^z$ is the end-point of the bar generated by $z_i$, and similarly for the $x_j$. In this section, we assume for the ease of exposition that $X$ has no infinite bars in its decomposition. All the results we present can be adapted to the general case by setting $b_j^x = \infty$ whenever $x_j$ generates an infinite bar.
Throughout this section, we will consider an example monomorphism $f$, which we represent as follows:
\begin{equation}
\label{E:ExampleMono}
\begin{tikzpicture}[scale = 0.4, baseline = {([yshift = .5ex](0, 2))}]
\draw[line width=1pt]
  (5, 10) node[left]{$a_1^z$} -- (10, 10) node[right]{$b_1^z = b_3^x$}
  (7, 9) node[left]{$a_2^z$} -- (10, 9) node[right]{$b_2^z = b_3^x$}
  (8, 8) node[left]{$a_3^z$} -- (13, 8) node[right]{$b_3^z = b_6^x$}
;

\draw[line width=1pt]
  (4, 5) node[left]{$a_1^x$} -- (6, 5) node[right]{$b_1^x$}
  (2, 4) node[left]{$a_2^x$} -- (10, 4) node[right]{$b_2^x$}
  (3, 3) node[left]{$a_3^x$} -- (10, 3) node[right]{$b_3^x$}
  (5, 2) node[left]{$a_4^x$} -- (11, 2) node[right]{$b_4^x$}
  (0, 1) node[left]{$a_5^x$} -- (12, 1) node[right]{$b_5^x$}
  (1, 0) node[left]{$a_6^x$} -- (13, 0) node[right]{$b_6^x$}
;

\draw[->] (5.2, 9.8) -- (5.2, 5.2);
\draw[->] (8.2, 8.8) -- (8.2, 4.2);
\draw[->] (8.8, 9.8) -- (8.8, 3.2);
\draw[->] (9.2, 8.8) -- (9.2, 3.2);
\draw[->] (10.2, 7.8) -- (10.2, 2.2);
\draw[->] (11.2, 7.8) -- (11.2, 1.2);
\draw[->] (12.2, 7.8) -- (12.2, .2);

\node (z) at (17, 9) {$Z$};
\node (x) at (17, 3) {$X$};
\draw[->] (z) --node [right]{$f$} (x);
\end{tikzpicture}
\end{equation}
The persistence modules $Z$ and $X$ are represented in terms of their barcode decompositions. An arrow between bars indicates that the bar in the domain maps non-trivially to the bar in the codomain.

The main results of this subsection are based on matrix reduction arguments applied to a matrix $M_f$ associated with the morphism $[f_G \; i_X]\colon  G_Z \oplus R_X \rightarrow G_X$ (Section \ref{sec:free_pres_mono}), which we construct as follows.

\begin{defn}
Define the sets of labels $\Xgenlabel \coloneqq  \{x_j\}_{j = 1}^n$, $\Zgenlabel \coloneqq  \{z_i\}_{i = 1}^m$, and $\Xrellabel \coloneqq  \{r_j\}_{j = 1}^n$, where $\{z_i\}_{i = 1}^m$ and $\{x_j\}_{j = 1}^n$ are generators of $Z$ and $X$ respectively and $r_j$ corresponds to the generator of $R_X$ that is sent by $i_X$ to the bar generated by $x_j$ in $G_X$. The \textbf{degree} of $r_j$ is $b_j^x$.

The \textbf{presentation matrix of $f$} is an $n \times (m + n)$ matrix $M_f$ with rows labeled by $\Xgenlabel$ and columns labeled by $\Zgenlabel \sqcup \Xrellabel$, constructed as follows. For each $z_i$ in $\Zgenlabel$, we set the corresponding column of $M_f$ to be the column vector $f_{a_i^z}(z_i) \in X_{a_i^z}$ in the basis given by the nonzero elements of $\{X_{a_j^x \leq a_i^z}(x_j)\}_{j = 1}^n$. Note that if $X_{a_j^x \leq a_i^z}(x_j)$ is $0$, then $M_f(x_j, z_i)$ is also $0$.
For each $r_j$ in $\Xrellabel$, we set the corresponding column of $M_f$ to be the zero vector except with a $1$ on the row $x_j$. Finally, we reorder the rows and columns so that the degrees of the labels are nondecreasing.

We denote by $M_f(x, c)$ the entry of $M_f$ in row $x\in \Xgenlabel$ and column $c\in \Zgenlabel \sqcup \Xrellabel$.
\end{defn}

As an example, one presentation matrix of the example monomorphism $f$ from \eqref{E:ExampleMono} is
\begin{equation}
\label{E:AssociatedMatrix}
M_f = \begin{blockarray}{c@{}c@{}*{9}c}
&& z_1 & r_1 & z_2 & z_3 & r_2 & r_3 & r_4 & r_5 & r_6 \\
\begin{block}{c[c@{\;}*{9}c]}
x_5 && \tikzmark{a1}{0} &   & \tikzmark{a2}{0} & 1 &   &   &   & 1 &   \\
x_6 && 0 &   & 0 & 1 &   &   &   &   & 1 \\
x_2 && 0 &   & 1 & 0 & 1 &   &   &   &   \\
x_3 && 1 &   & 1 & 0 &   & 1 &   &   &   \\
x_1 && 1 & 1 & 0 & 0 &   &   &   &   &   \\
x_4 && \tikzmark{b1}{0} &   & 0 & \tikzmark{b2}{1} &   &   & 1 &   &   \\
\end{block}
\end{blockarray},
\begin{tikzpicture}[overlay, remember picture]
\draw (a1.north west) rectangle (b1.south east);
\draw (a2.north west) rectangle (b2.south east);
\end{tikzpicture}
\end{equation}
where the columns $\Zgenlabel = \{z_1, z_2, z_3\}$ are outlined, while the columns $\Xrellabel = \{r_1, \ldots, r_6\}$ are represented sparsely: blank spaces are zero coefficients. Note that the restriction of the matrix $M_f$ to the columns $\Xrellabel$ represents the morphism $i_X \colon R_X \to G_X$.

\begin{rmk}
\label{rmk:pers_algo}
As we mentioned in Section \ref{sec:free_pres_mono}, we want to determine the barcode decomposition of $\coker f$ by using the persistent homology algorithm on the matrix $M_f$ representing the morphism $[f_G \; i_X]$. More precisely, we are interested in methods to compute barcode decompositions based on matrix reduction via left-to-right column operation, like the so-called standard algorithm for persistent homology \cite{edelsbrunner2000topological,zomorodian2005computing} (see Algorithm 1 in \cite{otter2017roadmap} for a description). Even though
these methods
are usually presented for filtered simplicial complexes in the literature, they extend to graded free chain complexes as in our case. The barcode decomposition (of $\coker f$ in our case) can be read out from a reduced matrix, and does not depend on the way of reducing the matrix via left-to-right column operations (see Lemma \ref{lem:sigma_welldef}).
\end{rmk}

Let $\bar M_f$ be a complete reduction of $M_f$ by left-to-right column transformations, where a matrix is said to be \textbf{reduced} if no two columns have their lowest nonzero entry on the same row. Let $\sigma_f$ be the function that to the $k^{\text{th}}$ nonzero column of $\bar M_f$ associates the row of its lowest nonzero entry, for every $k\in \{1,\ldots ,n\}$. We know that $\sigma_f$ is a permutation on $\{1,\ldots ,n\}$ since the $n$ columns of $M_f$ in $\Xrellabel$ are linearly independent. 
In this section, we use square brackets for a permutation $\sigma = [\sigma (1) \cdots \sigma (n)]$ on $\{1,\ldots ,n\}$ expressed in one-line notation, to distinguish it from the notation for cycles, denoted by $(c_1 \, c_2 \, \cdots \, c_\ell)$.  
For the running example \eqref{E:AssociatedMatrix}, we get 
\begin{equation*}
\bar M_f = \left[ \enspace \begin{matrix}
\tikzmark{a1}{0} &   & 0 & \tikzmark{a2} 1 &   &   &   & \fbox{$1$} &   \\
0 &   & 0 & 1 &   &   & \fbox{$1$} &   & 0 \\
0 &   & \fbox{$1$} & 0 & 0 &   &   &   &   \\
1 & \fbox{$1$} & 0 & 0 &   & 0 &   &   &   \\
\fbox{$1$} & 0 & 0 & 0 &   &   &   &   &   \\
\tikzmark{b1}{0} &   & \tikzmark{b2}{0} & \fbox{$1$} &   &   & 0 &   &   \\
\end{matrix} \right]
\begin{tikzpicture}[overlay, remember picture]
\draw (a1.north west) rectangle (b1.south east);
\draw (a2.north east) rectangle (b2.south west);
\end{tikzpicture},
\end{equation*}
where we have outlined the lowest nonzero coefficient of each column, and so $\sigma_f = [543621]$. 
We do not need to specify the order of transformations in this reduction thanks to the following lemma, which is a consequence of the pairing uniqueness lemma of \cite[Sect.\ 3]{cohen2006vines}.

\begin{lem}
\label{lem:sigma_welldef}
The permutation $\sigma_f$ is well-defined.
In particular, it does not depend on the choice of a sequence of left-to-right column operations to obtain a reduced matrix from $M_f$.
\end{lem}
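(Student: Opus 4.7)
The plan is to reduce the statement to the pairing uniqueness lemma of \cite[Sect.~3]{cohen2006vines}, which asserts that for any matrix $M$ over a field with a fixed ordering of its rows and columns, any sequence of left-to-right column operations bringing $M$ to a reduced form produces the same set of pairs $(i,j)$ such that column $j$ of the reduced matrix is nonzero with lowest nonzero entry in row $i$. In particular, both the subset $J \subseteq \{1, \ldots, m+n\}$ of columns that remain nonzero after reduction and the value of the low function on each element of $J$ are invariants of $M$ alone.

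First I would check that $\rank(M_f) = n$. By construction, each of the $n$ columns of $M_f$ indexed by $C_X = \{r_1, \ldots, r_n\}$ is the standard basis vector supported on the row labelled by the corresponding $x_j$, so these columns form a basis of $K^n$ (up to reordering of rows). Hence $\rank(M_f) = n$. Since left-to-right column operations preserve rank, any reduced form $\bar M_f$ has exactly $n$ nonzero columns; and because $\bar M_f$ is reduced their lowest nonzero entries occupy distinct rows, so each of the $n$ rows of $M_f$ is hit exactly once.

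Finally, writing $J = \{j_1 < j_2 < \cdots < j_n\}$ for the set of positions of nonzero columns of $\bar M_f$, the pairing uniqueness lemma guarantees that $J$ and each low value $\operatorname{low}(j_k)$ depend only on $M_f$. The function $\sigma_f : \{1, \ldots, n\} \to \{1, \ldots, n\}$ defined by $\sigma_f(k) = \operatorname{low}(j_k)$ is therefore well-defined, and the rank count above shows that it is a bijection. The only point that requires care is the invocation of pairing uniqueness itself: the cited lemma is traditionally phrased for boundary matrices of filtered simplicial complexes, but its proof, which expresses $\operatorname{low}$ in terms of ranks of certain submatrices of $M$, applies verbatim to any matrix over a field with ordered rows and columns, and in particular to $M_f$; the passage from the column index $j_k$ to the ordinal position $k$ among nonzero columns is then purely combinatorial bookkeeping.
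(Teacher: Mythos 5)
Your proposal is correct and follows essentially the same route as the paper, which likewise derives well-definedness from the pairing uniqueness lemma of Cohen-Steiner, Edelsbrunner and Morozov and uses the linear independence of the $n$ columns in $C_X$ to see that $\sigma_f$ is a permutation. Your additional remarks (the rank count and the observation that the rank-of-submatrices argument behind pairing uniqueness applies to arbitrary matrices, not just simplicial boundary matrices) simply make explicit details the paper leaves implicit.
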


By design of the persistent homology algorithm, a barcode decomposition of $\coker f$ is completely determined by $\sigma_f$ together with the degrees of the generators of $Z$ and $X$. In Corollary \ref{coro:coker_f_b} we will provide a precise statement.

From the matrix $M_f$ we define the \textbf{bar-to-bar matrix} $M_b$ by Algorithm \ref{alg:btb}. The bar-to-bar matrix $M_b$ is the presentation matrix of a \textbf{bar-to-bar monomorphism} $f_b\colon Z\hookrightarrow X$ having the same domain and codomain as $f$. 

Algorithm \ref{alg:btb} also partially reduces $M_f$ and constructs an injective function $r_{\max} \colon \Zgenlabel \to \Xrellabel$. 
Given a column $z$ in $\Zgenlabel$, we call $r_{\max}(z)$ its \textbf{rightmost matched column} in $\Xrellabel$. 
Informally, Algorithm \ref{alg:btb} computes the bar-to-bar matrix $M_b$ by setting to zero each entry in the columns $z$ of $M_f$ in $\Zgenlabel$ except for the nonzero entry on the unique row $x$ such that $M_f(x,r_{\max}(z))=1$. For example, starting with the matrix $M_f$ from \eqref{E:AssociatedMatrix}, we get
\begin{equation*}
\left[ \enspace \begin{matrix}
\tikzmark{a1}{0} & \hphantom{\fbox{1}} & \tikzmark{a2}{0} & 1 & \hphantom{\fbox{1}} & \hphantom{\fbox{1}} & \hphantom{\fbox{1}} & 1 & \hphantom{\fbox{1}} \\
0 &   & 0 & \tikzmark{s1}{1} &   &   &   &   & \tikzmark{t1}{\fbox{1}} \\
0 &   & \tikzmark{s2}{1} &   & \tikzmark{t2}{\fbox{1}} &   &   &   &   \\
\tikzmark{s3}{1} &   &   &   &   & \tikzmark{t3}{\fbox{1}} &   &   &   \\
1 & 1 & 0 & 0 &   &   &   &   &   \\
\tikzmark{b1}{0} &   & 0 & \tikzmark{b2}{1} &   &   & 1 &   &   \\
\end{matrix} \right],
\begin{tikzpicture}[overlay, remember picture]
\draw (a1.north west) rectangle (b1.south east);
\draw (a2.north west) rectangle (b2.south east);
\draw[->] (s1) -- (t1);
\draw[->] (s2) -- (t2);
\draw[->] (s3) -- (t3);
\end{tikzpicture}
\end{equation*}
where the arrows represent the function $r_{\max}$. The corresponding matrix $M_b$ is
\begin{equation*}
\left[ \enspace
\begin{matrix}
\tikzmark{a1}{0} & \hphantom{\fbox{1}} & \tikzmark{a2}{0} & 0 & \hphantom{\fbox{1}} & \hphantom{\fbox{1}} & \hphantom{\fbox{1}} & 1 & \hphantom{\fbox{1}} \\
0 &   & 0 & 1 &   &   &   &   & 1 \\
0 &   & 1 & 0 & 1 &   &   &   &   \\
1 &   & 0 & 0 &   & 1 &   &   &   \\
0 & 1 & 0 & 0 &   &   &   &   &   \\
\tikzmark{b1}{0} &   & 0 & \tikzmark{b2}{0} &   &   & 1 &   &   \\
\end{matrix} \right].
\begin{tikzpicture}[overlay, remember picture]
\draw (a1.north west) rectangle (b1.south east);
\draw (a2.north west) rectangle (b2.south east);
\end{tikzpicture}
\end{equation*}

\begin{algorithm}
\caption{Bar-to-bar algorithm}\label{alg:btb}
\textbf{Input:} a presentation matrix $M_f$ of a monomorphism $f$

\textbf{Output:} a partially reduced matrix $M^*_f$, the associated bar-to-bar matrix $M_b$, and a function $r_{\max} \colon \Zgenlabel \to \Xrellabel$
\begin{algorithmic}[1]
\State Let $M_b \coloneqq  M_f$
\State Let $M_f^* \coloneqq  M_f$
\State Set the columns $\Zgenlabel$ of $M_b$ to $0$
\For{$r \in \Xrellabel$ in decreasing order}
\State Let $x$ be the row associated to $r$ (that is, $M_f^*(x,r)=1$)
\If{$\exists z \in \Zgenlabel$ such that $M_f^*(x, z) = 1$ and $r_{\max}(z)$ is undefined}
\State Let $z$ be minimal such that $M_f^*(x, z) = 1$ and $r_{\max}(z)$ is undefined
\State Set $M_b(x, z) = 1$
\State Define $r_{\max}(z) \coloneqq  r$
\For{$z' > z$ such that $M_f^*(x, z') = 1$}
\State Reduce column $z'$ in $M_f^*$ by column $z$ to set to zero the entry in row $x$
\For{$r' \in \Xrellabel$ and $x'$ the row associated to $r'$, such that $r' < z'$ and $M_f^*(x', z') = 1$}
\State Reduce column $z'$ in $M_f^*$ by column $r'$
\EndFor 
\EndFor
\EndIf
\EndFor
\end{algorithmic}
\end{algorithm}

The following two propositions prove useful facts regarding Algorithm \ref{alg:btb}.
\begin{prop}
\label{prop:columnsCZ}
In a presentation matrix $M_f$ of a monomorphism $f\colon Z\hookrightarrow X$, all columns in $\Zgenlabel$ are nonzero. Moreover, for every column $z \in \Zgenlabel$, all columns in the set
\begin{equation*}
\Gamma (z) \coloneqq  \{r \in \Xrellabel \mid r \text{ and } z \text{ have a nonzero entry on the same row}  \}
\end{equation*}
have degree strictly larger than the degree of $z$,
and  $\lvert \Gamma (z)\rvert$ equals the number of nonzero entries of $z$.
\end{prop}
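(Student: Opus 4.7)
The proof amounts to unpacking the construction of the presentation matrix $M_f$, so my plan is to verify the three assertions directly from the definition, all of which hinge on the same observation: the column of $M_f$ associated with $z_i \in C_Z$ is the coordinate vector of $f_{a_i^z}(z_i) \in X_{a_i^z}$ with respect to the basis formed by the transported generators of $X$ that are alive at time $a_i^z$, namely $\{X_{a_j^x \le a_i^z}(x_j) : a_j^x \le a_i^z < b_j^x\}$. Identifying the relevant basis as exactly the live generators—discarding those $x_j$ not yet born (for which $X_{a_j^x \le a_i^z}(x_j)$ is undefined) and those already dead (for which the transported image vanishes)—is the only subtle point; once it is made explicit, the rest is bookkeeping.

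For the first assertion, that every column in $C_Z$ is nonzero, I would use that $f$ is a monomorphism, so $f_{a_i^z}$ is injective and sends the nonzero generator $z_i \in Z_{a_i^z}$ to a nonzero element of $X_{a_i^z}$. Its coordinate vector in the basis above is therefore nonzero, and hence so is the corresponding column of $M_f$.

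For the second assertion, if $M_f(x_j, z_i) \ne 0$, then $X_{a_j^x \le a_i^z}(x_j)$ is a basis element and, in particular, nonzero, which forces $a_j^x \le a_i^z < b_j^x$. This gives $\deg(z_i) = a_i^z < b_j^x = \deg(r_j)$, which is the required strict inequality.

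For the third assertion, I would use that, by construction, each column $r_j \in C_X$ has a unique nonzero entry, placed on row $x_j$. Thus $r_j \in \Gamma(z_i)$ if and only if column $z_i$ has a nonzero entry on row $x_j$, which exhibits an explicit bijection between $\Gamma(z_i)$ and the set of rows on which column $z_i$ is nonzero. I do not expect any real obstacle beyond correctly identifying the active basis in each column; the three claims then follow immediately.
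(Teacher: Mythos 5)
Your proposal is correct and follows essentially the same argument as the paper: injectivity of $f$ forces the columns in $C_Z$ to be nonzero, a nonzero entry on a row $x_j$ means the transported generator is still alive at the degree of $z$ (hence $b_j^x$ strictly exceeds that degree), and the count $|\Gamma(z)|$ follows from the columns in $C_X$ forming a permutation matrix. Your explicit identification of the basis as the live transported generators is a reasonable clarification of the paper's definition, not a deviation in method.
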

\begin{proof}
Since $f$ is a monomorphism, it cannot send a generator of a bar of $Z$ to zero, hence the columns in $\Zgenlabel$ are nonzero. A nonzero entry in a column $z\in \Zgenlabel$ indicates that the corresponding generator of a bar of $Z$ maps non-trivially to the vector space generated 
by $X_{a^x \le a^z} (x)$ for some $x$ generating a bar in $X$, where $a^x$ is the degree of $x$ and $a^z$ is the degree of $z$. 
This implies that the end-point of the bar of $X$ generated by $x$ has degree strictly larger than the degree of $z$. Lastly, the cardinality of $\Gamma (z)$ equals the number of nonzero entries of $z$ because the columns in $\Xrellabel$ form a permutation matrix of rank $n$.
\end{proof}

\begin{prop}
\label{prop:rmax}
Let $f\colon Z\hookrightarrow X$ be a monomorphism and let $M_f$ be a presentation matrix of $f$. The execution of Algorithm \ref{alg:btb} on $M_f$ returns a well-defined function $r_{\max}\colon \Zgenlabel \to \Xrellabel$ that is injective.
Furthermore, for every column $z\in \Zgenlabel$, the column $r_{\max}(z)$ is to the right of $z$.
\end{prop}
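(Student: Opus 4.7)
The plan is to prove all three assertions by maintaining two invariants on $M_f^*$ throughout Algorithm~\ref{alg:btb} and then invoking the monomorphism property of $f$ for the hardest part (totality). \emph{Invariant (A) (support control):} for every $z\in C_Z$, every nonzero entry of $z$ in $M_f^*$ lies in a row $x$ whose associated $r_x\in C_X$ satisfies $r_x>z$ in the column ordering. This holds at initialization by Proposition~\ref{prop:columnsCZ}, and is preserved by the inner loops: adding $z_0$ to $z'$ (with $z_0<z'$) introduces entries from $z_0$'s support, which by (A) live in rows with $r>z_0$; the subsequent cleanup removes entries in rows whose $r'<z'$, leaving only rows with $r'\geq z'$, hence $r'>z'$ since $r'\in C_X$ and $z'\in C_Z$ occupy distinct positions. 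Invariant (A) immediately gives the ``to the right'' assertion, since when $r_{\max}(z_0):=r$ is defined we have $M_f^*(x_r,z_0)=1$, forcing $r>z_0$. Injectivity is a direct consequence of the fact that each outer iteration assigns $r_{\max}$ on at most one column.

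\emph{Invariant (B) (processed rows cleared):} after processing $r\in C_X$, every still-unmatched $z\in C_Z$ satisfies $M_f^*(x_r,z)=0$. The minimality of the chosen $z_0$ rules out an unmatched $z<z_0$ with $M_f^*(x_r,z)=1$; the reductions zero out row $x_r$ in modified columns $z'>z_0$; and the cleanup acts only on rows whose associated $r''<z'$, which excludes row $x_r$ by (A). The invariant persists through later iterations because any newly matched column $z_0^{\mathrm{new}}$ was unmatched just before and so, by (B), had $M_f^*(x_r,z_0^{\mathrm{new}})=0$, so subsequent additions do not repopulate row $x_r$. At termination every $r\in C_X$ has been processed, and since every row corresponds to some $r\in C_X$, (B) forces every unmatched $z$ to be the zero column.

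For totality I would argue by contradiction: suppose $z_k\in C_Z$ ends up unmatched, hence zero. The column operations applied to $z_k$ throughout the algorithm only add columns from $\{z_j : j<k\}\cup\{r'\in C_X : r'<z_k\}$, so $z_k$ becoming zero is equivalent to $z_k$ lying in the span $V_k$ of these columns. To rule this out I would restrict every column to the row set $R_k := \{x_j : r_j>z_k\}$, which by Proposition~\ref{prop:columnsCZ} contains the entire support of $z_k$. The restrictions of $r'<z_k$ vanish, as their unique nonzero entry lies in a row outside $R_k$. The restriction of each $z_j$ equals, by naturality of $f$, the image $f_{a_k^z}(Z_{a_j^z\leq a_k^z}(z_j))$; this vanishes when the bar of $z_j$ is dead at $a_k^z$, and otherwise equals $f_{a_k^z}$ applied to a basis vector of $Z_{a_k^z}$. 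The injectivity of $f_{a_k^z}$ (from $f$ being a monomorphism) then implies that the restrictions of the $z_j$ alive at $a_k^z$ with $j\leq k$ are linearly independent in $X_{a_k^z}$, so the restriction of $z_k$ is not in the span of the restrictions of the $z_j$ for $j<k$, and in particular $z_k\notin V_k$, giving the contradiction.

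The main obstacle is this totality argument: bridging the combinatorial invariants with the algebraic monomorphism condition by restricting to $R_k$. Care is required to verify that $z_k$ lies in the basis of $Z_{a_k^z}$ (automatic since $a_k^z<b_k^z$) and that the column-ordering conventions between $C_Z$ and $C_X$ make the indexings ``$j<k$'' and ``$r'<z_k$'' correctly describe the columns that can have been added to $z_k$.
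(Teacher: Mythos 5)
Your proposal is correct, but it takes a genuinely different route from the paper. The paper proves the statement by induction on $|C_Z|$: it shows the `if' statement must eventually fire (via Proposition~\ref{prop:columnsCZ}), argues that the partial reductions transform $M_f$ into a presentation matrix of a \emph{new} monomorphism $f'$ (a pointwise linear-independence check), deletes the matched row and column to get a smaller monomorphism presentation, and invokes the induction hypothesis; well-definedness, injectivity and the ``to the right'' claim all come out of this recursion. You instead organize the combinatorics through the two loop invariants (A) and (B) — which do hold and are preserved for essentially the reasons you give, including the observation that the cleanup steps only cancel existing entries and never repopulate rows of already-processed elements of $C_X$ — and then prove totality by the standard left-to-right-reduction fact that a column that dies lies in the span of the \emph{original} columns to its left, restricted to the rows $R_k$, where injectivity of $f_{a_k^z}$ kills that possibility. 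The algebraic input is morally the same (pointwise injectivity of $f$ plus Proposition~\ref{prop:columnsCZ}), but your argument avoids reinterpreting intermediate matrices as presentations of new monomorphisms and avoids the induction, isolating the single place where the monomorphism hypothesis enters; your invariants also make explicit structural facts about $M_f^*$ that the paper only uses implicitly later. What the paper's route buys in exchange is reuse: the ``partially reduced matrix still presents a monomorphism, delete a row and a column and induct'' mechanism is exactly the scaffolding recycled in the proof of Proposition~\ref{P:PermutationInequality}. Two small points to nail down in your write-up: (i) the identification of the restriction of column $z_j$ to $R_k$ with the coordinate vector of $f_{a_k^z}(Z_{a_j^z\le a_k^z}(z_j))$ is only exact if you shrink $R_k$ to the rows whose associated degree $b_i^x$ is \emph{strictly} larger than $a_k^z$ (Proposition~\ref{prop:columnsCZ} guarantees the support of $z_k$ still lies there, so nothing is lost); this is the tie-breaking caveat you already flag; and (ii) the passage from ``the current column $z_k$ is zero'' to ``the original column $z_k$ lies in $V_k$'' needs the (easy, but worth stating) induction that every current column equals its original plus a combination of originals strictly to its left.
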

\begin{proof}
We prove that, for every column $z\in \Zgenlabel$, $r_{\max}(z)$ is well-defined and to the right of $z$. We proceed by induction on a natural number $m \geq 0$, proving the result for all monomorphisms $f \colon Z \hookrightarrow X$ with presentations such that $\lvert \Zgenlabel\rvert = m$.

If $m=1$ and $\Zgenlabel=\{z\}$, then the algorithm sets $r_{\max}(z)$ to be the rightmost column in $\Xrellabel$ having a nonzero entry on the same row as a nonzero entry of $z$, which exists and is to the right of $z$ by Proposition \ref{prop:columnsCZ}.

Now suppose that the statement holds for every monomorphism presentation matrix with $m$ columns in $\Zgenlabel$. Let $M_f$ be a presentation matrix such that $\lvert \Zgenlabel\rvert =m+1$. Algorithm \ref{alg:btb} performs a `for' loop (line 4) until the `if' statement (line 6) is true, which by Proposition \ref{prop:columnsCZ} must happen before the algorithm terminates. Let $r_0$ be the rightmost column in $\Xrellabel$ such that there is a (minimal, i.e.\ leftmost) $z\in \Zgenlabel$ with $M_f(x,z)=1$, where $x$ is the row associated to $r_0$.
Again by Proposition \ref{prop:columnsCZ}, column $r_0$ is to the right of column $z$. The reductions in lines 11-14 of the algorithm transform $M_f$ into a matrix $M^*_f$ presenting a different monomorphism $f'\colon Z\hookrightarrow X$.
The morphism $f'$ coincides with $f$ on all generators of $Z$ except for generator $z'$, which is mapped to the nonzero element $f_{a^{z'}} (z') + f_{a^{z'}} (Z_{a^z \le a^{z'}} (z))$, where $a^z$ and $a^{z'}$ respectively denote the degrees of $z$ and $z'$.  
We see that $f'$ is a monomorphism via the following pointwise argument. For every degree $a$, the linear function $f_a\colon Z_a \to X_a$ has $\ker f_a = 0$, hence it maps nonzero elements in $\{Z_{a^z_i \le a} (z_i)\}_{i=1}^m$ to linearly independent elements $\{y_j\}$ in $\text{span}(X_{a_j^x \leq a}(x_j))_{j = 1}^n$. We see that $f'_a\colon Z_a \to X_a$ satisfies the same linear independence property (which implies $\ker f'_a = 0$) because the set of image elements coincides, except for possibly an element $y'$ replaced by $y'+y$, where $y$ is a different element of the set.

In $M^*_f$, the only column in $\Zgenlabel$ with nonzero entry in row $x$ is $z$.
By removing column $z$ and row $x$, we obtain a matrix with $m$ columns in $\Zgenlabel$ which is again a presentation matrix of a monomorphism. 
By induction hypothesis we know that the algorithm determines a function $r'_{\max} \colon  \Zgenlabel\setminus \{z\} \to \Xrellabel$ whose image does not contain $r_0$ and the columns to its right. The function $r'_{\max}$ extends to a function $r_{\max} \colon  \Zgenlabel \to \Xrellabel$ by defining $r_{\max}(z)\coloneqq r_0$. Finally, we observe that the function $r_{\max}$ is injective by construction.
\end{proof}

Let us now go back to the reduction of presentation matrices.
As with $M_f$, we can reduce $M_b$ by left-to-right column transformations to get a reduced matrix $\bar M_b$. We denote by $\sigma_b$ the permutation on $\{1,\ldots ,n\}$ associated with the nonzero columns of $\bar M_b$, which is well-defined because the matrix $M_b$ only has columns with at most one nonzero coefficient and has the same set of columns in $\Xrellabel$ as $M_f$. In our running example, computing $\bar M_b$ gives us $\sigma_b = [453261]$. 

After reduction via left-to-right column operations, the matrices $\bar M_f$ and $\bar M_b$ have non-zero columns with the same set of labels, as we will prove in Proposition \ref{P:PermutationInequality}.

\begin{defn}
Let $n \geq 1$ be an integer and $\sigma$ a permutation on $\{1, \ldots, n\}$. An \textbf{inversion} of $\sigma$ is a pair $(i, j)$ of elements of $\{1, \ldots, n\}$ such that $i < j$ and $\sigma(i) > \sigma(j)$. 

Given a permutation $\sigma$, we also give the name \textbf{inversion} to a transposition $(i \, j)$ such that $i < j$ and $\sigma(i) < \sigma(j)$: composing $\sigma$ by $(i \, j)$ on the right creates an inversion.
\end{defn}

Using inversions we can define a poset structure on permutations: we write $\sigma \leq \sigma'$ if there exist $k\ge 0$ and a composition of transpositions $\tau = \tau_1 \cdots \tau_k$ such that $\sigma \tau = \sigma'$ and, for all $i \leq k$, $\tau_i$ is an inversion of the permutation $\sigma \tau_1 \cdots \tau_{i - 1}$. 
In what follows, we often call $\tau$ simply a \textbf{composition of inversions of} $\sigma$ when it satisfies this property. Notice that $\le$ is a partial order on $S_n$, the symmetric group on $\{1,\ldots ,n\}$. With respect to this order, the identity permutation is the smallest element and the reverse permutation $[n \; n-1 \; \ldots \; 2 \; 1]$ is the largest element. 

\begin{prop}
\label{P:PermutationInequality}
Let $f\colon Z \hookrightarrow X$ be a monomorphism, $M_f$ be a presentation matrix of $f$ and $M_b$ be the bar-to-bar matrix computed via Algorithm \ref{alg:btb}. Let $\bar M_f$ and $\bar M_b$ be reduced matrices obtained from $M_f$ and $M_b$ respectively, and let $\sigma_f$ and $\sigma_b$ be the associated permutations. Then, the following facts hold: 
\begin{itemize}
\item the nonzero columns of the reduced matrices $\bar M_f$ and $\bar M_b$ are in the same positions, 
\item $\sigma_f \geq \sigma_b$, that is, $\sigma_f = \sigma_b \tau$ with $\tau$ a composition of inversions of $\sigma_b$.
\end{itemize}
\end{prop}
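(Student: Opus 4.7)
The plan is to reason with the matrix $M^*_f$ produced by Algorithm \ref{alg:btb} in place of $M_f$: since the algorithm consists of left-to-right column operations, Lemma \ref{lem:sigma_welldef} guarantees that any further left-to-right reduction of $M^*_f$ yields the same permutation $\sigma_f$. First I would establish the following structural property of $M^*_f$, by tracing the inner and outer loops of Algorithm \ref{alg:btb} together with Propositions \ref{prop:columnsCZ} and \ref{prop:rmax}: for each $z\in C_Z$, the column $M^*_f[z]$ has a nonzero entry at the row associated with $r_{\max}(z)$, and all its remaining nonzero entries lie at rows associated with $C_X$ columns that are strictly to the right of $z$ and do not belong to $r_{\max}(C_Z)$. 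Equivalently, $M^*_f[z]-M_b[z]$ is a sum of $C_X$ columns of this kind.

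Next I would carry out an incremental transformation, building a sequence $N_0=M_b, N_1,\ldots, N_N=M^*_f$ in which $N_{i+1}$ is obtained from $N_i$ by adding one such $e_{\mathrm{row}(r)}$ to a column $z\in C_Z$. The crucial single-step lemma asserts: if $z$ sits at nonzero-column-index $k$ and $r$ at nonzero-column-index $k'>k$ in $N_i$, then either $\sigma_{N_{i+1}}=\sigma_{N_i}$ (when $\mathrm{row}(r)$ lies above $\sigma_{N_i}(k)$ in the matrix, so the new entry is above the current pivot of $z$) or $\sigma_{N_{i+1}}=\sigma_{N_i}(k\,k')$ (when $\mathrm{row}(r)$ lies strictly below $\sigma_{N_i}(k)$). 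In the latter case, the transposition $(k\,k')$ is an inversion of $\sigma_{N_i}$ because $k<k'$ and $\sigma_{N_i}(k)<\sigma_{N_i}(k')=\mathrm{row}(r)$; in both cases the set of nonzero column positions is preserved, since the only effect is either nothing or a swap of two pivot rows, neither of which creates or destroys a nonzero column.

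Chaining these single-step conclusions then gives $\sigma_f=\sigma_b\tau$ with $\tau$ a composition of inversions of intermediate permutations, which is exactly the content of $\sigma_b\le\sigma_f$. The invariance of the nonzero column set at each step simultaneously yields the first assertion, that $\bar M_f$ and $\bar M_b$ have nonzero columns at identical positions, namely those indexed by $C_Z\cup (C_X\setminus r_{\max}(C_Z))$.

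The main technical obstacle is the single-step lemma. I expect the cleanest route is through the column-span filtration $V_j=\mathrm{span}(c_1,\ldots,c_j)$: for the modified matrix $N_{i+1}$ one has $V_j^{N_{i+1}}=V_j^{N_i}$ both for $j$ strictly less than the position of $z$ and for $j$ at least the position of $r$, because the added vector $e_{\mathrm{row}(r)}$ coincides with the column $r$ and is absorbed once $r$ enters the filtration. Hence the pairing can only change at nonzero-indices in $[k,k']$, and since the intermediate $C_X$ columns form a partial permutation matrix whose pivots differ from the two rows involved in the swap, the change is confined to the single transposition $(k\,k')$.
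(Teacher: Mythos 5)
Your proposal rests on a structural claim about $M^*_f$ that is false, and this is not a minor slip: the disallowed configurations are exactly where the difficulty of the proposition lives. Concretely, take $X=K(0,10)\oplus K(1,10)$ with generators $x_1,x_2$ and $Z=K(2,10)\oplus K(3,10)$ with generators $z_1,z_2$, and let $f(z_1)=x_1+x_2$, $f(z_2)=x_2$; this is a monomorphism, and $M_f$ has rows $(x_1,x_2)$ and columns $z_1=(1,1)^T$, $z_2=(0,1)^T$, $r_1=(1,0)^T$, $r_2=(0,1)^T$ (both $r_1,r_2$ of degree $10$). Running Algorithm \ref{alg:btb} (processing $r_2$ first) matches $z_1$ to $r_2$, reduces $z_2$ to $(1,0)^T$, and then matches $z_2$ to $r_1$; the column $z_1$ of $M^*_f$ is untouched and still has an entry in the row of $r_1=r_{\max}(z_2)$, which \emph{does} belong to $r_{\max}(C_Z)$ (the other tie-break gives the same phenomenon with $r_1,r_2$ swapped). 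What is true is only that the extra entries of column $z$ sit in rows of $C_X$ columns strictly between $z$ and $r_{\max}(z)$, and such columns may well be matched to other elements of $C_Z$. Consequently your chain from $M_b$ to $M^*_f$ necessarily contains steps in which the added vector $e_{\mathrm{row}(r)}$ has $r\in r_{\max}(C_Z)$; such an $r$ is a \emph{zero} column of $\bar M_b$ and of the intermediate reduced matrices, so your single-step lemma (which swaps the pivots of the nonzero columns at indices $k<k'$) does not even apply to these steps, and you give no argument for them.

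Even on the steps you do allow, the single-step lemma is asserted rather than proved. Your confinement argument uses only that the $C_X$ columns between $z$ and $r$ form a partial permutation matrix, but in general there are further $C_Z$ columns between them, carrying their own accumulated entries, and nothing in the column-span filtration argument excludes a pivot cascade through those columns; moreover the dichotomy ``$\mathrm{row}(r)$ above/below $\sigma_{N_i}(k)$'' compares a raw added entry with the pivot of the \emph{reduced} column, which needs justification. In fact, for an arbitrary matrix of this shape a single added entry can change which $C_X$ columns survive reduction (e.g.\ with rows $1,2,3$, columns $z=e_2$, $w=e_2+e_3$, and $C_X$ columns $e_2,e_3,e_1$ in that order, adding $e_1$ to $z$ moves the surviving $C_X$ column from position $5$ to position $3$), so the lemma can only hold by exploiting finer invariants of the matrices actually produced by Algorithm \ref{alg:btb} — invariants your chain does not record. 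This is precisely the bookkeeping the paper's proof performs differently: it inducts on $|C_Z|$, peels off the column whose match $r_{\max}(z_0)$ is rightmost, and controls the interaction of that column with all intermediate columns at once via the auxiliary matrix $M_g$, the cycle $\gamma$, and conjugation of the inductively obtained inversions. To salvage your incremental strategy you would need both a correct structural description of $M^*_f$ (including entries in rows of matched columns, which occur in a nested pattern $z'<z<r_{\max}(z)<r_{\max}(z')$) and a proof of the one-step statement under that description; as written, the argument has a genuine gap.
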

\begin{proof}
Since we can replace $M_f$ with the output $M_f^*$ of Algorithm \ref{alg:btb}, which has the same associated permutation $\sigma_f$ (as it is obtained by partially reducing $M_f$), and following the proof of Proposition \ref{prop:rmax}, we can assume that $M_f$ satisfies the following property: 
let $z_0$ be the unique column of $M_f$ in $\Zgenlabel$ such that the column $r_0 \coloneqq r_{\max}(z_0)$ is maximal in the total order on columns; then the only row $x_0$ such that $M_f(x_0,r_{\max}(z_0)) = 1$ has exactly one other nonzero entry, which is $M_f(x_0,z_0) = 1$.
We prove the claims by induction on the number of columns in $\Zgenlabel$. 

If $\Zgenlabel = \varnothing$, then there is nothing to prove: $M_f = M_b$ and they are reduced, so $\sigma_f = \sigma_b$.

Otherwise, we execute Algorithm \ref{alg:btb} to get the bar-to-bar matrix $M_b$ and the function $r_{\max}$. 
Let $z_0$ be the unique column of $M_f$ in $\Zgenlabel$ such that $r_0 \coloneqq  r_{\max}(z_0)$ is maximal in the total order on columns.
By removing column $z_0$, we obtain a presentation matrix $M_f'$ of a monomorphism $f'$ with a set of columns $\Zgenlabel'$ strictly contained in $\Zgenlabel$, to which we can apply our induction hypothesis: $\bar M_f'$ and $\bar M_b'$ have the same nonzero columns, and $\sigma_f' = \sigma_b' \tau$ for some composition of inversions $\tau$ of $\sigma_b'$. 
The matrix $M_b'$, computed by using Algorithm \ref{alg:btb} on $M_f'$, can be equivalently obtained by removing column $z_0$ from $M_b$, since $M_f$ satisfies the property stated at the beginning of the proof. See Example \ref{E:InductionMatrices} for matrices $M_f'$, $M_b'$, $\bar M_f'$ and $\bar M_b'$ in the running example.

Let $x_0$ be the only row such that $M_f(x_0,r_0)=1$. By the execution of Algorithm \ref{alg:btb}, no other column of $M_f'$ has a nonzero coefficient on row $x_0$, and so we deduce that the reductions of the matrices $M_f'$ and $M_b'$ do not affect column $r_0$.
Since by inductive hypothesis $\bar{M}_f'$ and $\bar{M}_b'$ have the same nonzero columns, this implies that column $r_0$ does not appear in the inversions of $\tau$,  
meaning that $\tau = (s_1 \, t_1)(s_2 \, t_2)\cdots (s_{k} \, t_{k})$ with $s_i \ne c_{r_0}'$ and $t_i \ne c_{r_0}'$ for all $i\in \{1,\ldots ,k \}$, where $c_{r_0}'$ denotes the relative position in $\{1,\ldots ,n\}$ of column $r_0$ in the (totally ordered) set of nonzero columns of the reduced matrix $\bar M_b'$.

Now, let $M_{g}$ be the matrix $M_f$ where we modify the column $z_0$ by setting to zero all its entries except the one on row $x_0$. We reduce the matrix $M_f$ first as for $M_f'$, and then we reduce the column $z_0$ by columns to its left, which does not affect the nonzero coefficient on row $x_0$: we denote the resulting matrix by $M_f''$. $\bar M_f$ is then obtained by completing the reduction using column $z_0$.
We reduce $M_{g}$ and $M_b$ in similar fashion, following $M_f'$ and $M_b'$, respectively. We observe the following facts.
\begin{itemize}
\item The nonzero columns of $\bar M_f$, $\bar M_{g}$, and $\bar M_b$ are the nonzero columns of $\bar M_f'$ and $\bar M_b'$, except we replace $r_0$ with $z_0$. This is clear by construction for the matrices $\bar M_{g}$ and $\bar M_b$, as the column $z_0$ coincides with $r_0$. 
For the matrix $\bar M_f$, observe that for every nonzero entry $M_f(x,z_0)$ on column $z_0$, there is a nonzero entry $M_f(x,r)$ in a column $r$ to the left of $r_0$, which implies that $r_0$ gets zeroed out after the reduction as it is linearly dependent with a number of columns to its left. 

\item $\sigma_f = \sigma_{g} \tau'$ where $\tau' \coloneqq  (c_{z_0} \, c_1) (c_1 \, c_2) \cdots (c_{k - 1} \, c_k)$ and $c_1, \ldots, c_k, c_{r_0}$ are the relative positions in $\{1,\ldots ,n\}$ of the nonzero columns of $M_f''$ 
whose lowest nonzero entry is modified (is moved to a different row) when reducing to $\bar M_f$, with $c_{z_0}$ and $c_{r_0}$ respectively denoting the relative positions of column $z_0$ and $r_0$ in the set of nonzero columns of $M_f''$. 
\item $\sigma_{g} = \sigma_f' \gamma^{-1}$ and $\sigma_b = \sigma_b' \gamma^{-1}$ where $\gamma \coloneqq (c_{z_0} \, c_{z_0}+1 \, \cdots \, c_{r_0})$ represents a cyclic permutation of the nonzero columns between $z_0$ and $r_0$.
\end{itemize}
See Example \ref{E:InductionMatrices} for concrete examples of these relationships. We deduce that
\begin{align*}
\sigma_f & = \sigma_{g} \tau' \\
& = \sigma_f' \gamma^{-1} \tau' \\
& = \sigma_b' \tau \gamma^{-1} \tau' \\
& = \sigma_b \gamma \tau \gamma^{-1} \tau'.
\end{align*}
By the definition of $\tau'$, it is a composition of inversions of $\sigma_{g}$. We conclude the induction step by showing that $\gamma \tau \gamma^{-1}$ is a composition of inversions of $\sigma_b$. 

More precisely, we know that $\tau = (s_1 \, t_1) \cdots (s_k \, t_k)$ is a composition of inversions of $\sigma'_b$, meaning that $(s_i \, t_i)$ is an inversion of the permutation $\sigma'_b (s_1 \, t_1) \cdots (s_{i-1} \, t_{i-1})$, for every $i\in \{1,\ldots ,k\}$, and we want to prove that $\gamma \tau \gamma^{-1} = (\gamma (s_1) \, \gamma (t_1)) \cdots (\gamma (s_k) \, \gamma (t_k))$ is a composition of inversions of $\sigma_b$, meaning that $(\gamma (s_i) \, \gamma (t_i))$ is an inversion of the permutation $\sigma_b (\gamma (s_1) \, \gamma (t_1)) \cdots (\gamma (s_{i-1}) \, \gamma (t_{i-1}))$, for every $i\in \{1,\ldots ,k\}$. 
First, we observe that $s_i < t_i$ implies $\gamma (s_i)< \gamma (t_i)$, since as observed earlier the relative position $c_{r_0}'$ of column $r_0$ in the set of nonzero columns of $\bar M_b'$ does not appear in $\tau$. 
Let us now denote 
\begin{gather*}
    \sigma'_{i-1} \coloneqq  \sigma'_b (s_1 \, t_1) \cdots (s_{i-1} \, t_{i-1}) , \\
    \sigma_{i-1} \coloneqq  \sigma_b (\gamma (s_1) \, \gamma (t_1)) \cdots (\gamma (s_{i-1}) \, \gamma (t_{i-1})) .
\end{gather*}
We have to prove that $\sigma'_{i-1}(s_i) < \sigma'_{i-1} (t_i)$ implies $\sigma_{i-1}(\gamma (s_i)) < \sigma_{i-1} (\gamma (t_i))$. This is a consequence of the equalities
\begin{equation*}
\sigma_{i-1}(\gamma (s_i)) = \sigma_{b} \gamma  (s_1 \, t_1) \gamma^{-1} \gamma  \cdots \gamma^{-1} \gamma (s_{i-1} \, t_{i-1}) \gamma^{-1} \gamma  (s_i) = \sigma'_{i-1} (s_{i})
\end{equation*}
and of similar equalities for $t_i$.
\end{proof}

\begin{ex}
\label{E:InductionMatrices}
\def\matrixscale{.6}
From the example matrix $M_f$ in \eqref{E:AssociatedMatrix}, the induction hypothesis of Proposition \ref{P:PermutationInequality} with $\Zgenlabel' = \Zgenlabel \setminus \{z_3\}$ gives the matrices
\begin{equation*}
M_f' = \scalebox{\matrixscale}{$\left[ \enspace
\begin{matrix}
\tikzmark{a1}{0} & \hphantom{\fbox{1}} & \tikzmark{a2}{0} & \hphantom{0}  &   &   &   & 1 &   \\
0 &   & 0 &   &   &   &   &   & 1 \\
0 &   & 1 &   & 1 &   &   &   &   \\
1 &   & 0 &   &   & 1 &   &   &   \\
1 & 1 & 0 &   &   &   &   &   &   \\
\tikzmark{b1}{0} &   & \tikzmark{b2}{0} &  &   &   & 1 &   &   \\
\end{matrix}
\enspace \right]
\begin{tikzpicture}[overlay, remember picture]
\draw (a1.north west) rectangle (b1.south east);
\draw (a2.north west) rectangle (b2.south east);
\end{tikzpicture}$},
M_b' = \scalebox{\matrixscale}{$\left[ \enspace
\begin{matrix}
\tikzmark{a1}{0} & \hphantom{\fbox{1}} & \tikzmark{a2}{0} & \hphantom{0}  &   &   &   & 1 &   \\
0 &   & 0 &   &   &   &   &   & 1 \\
0 &   & 1 &   & 1 &   &   &   &   \\
1 &   & 0 &   &   & 1 &   &   &   \\
0 & 1 & 0 &   &   &   &   &   &   \\
\tikzmark{b1}{0} &   & \tikzmark{b2}{0} &  &   &   & 1 &   &   \\
\end{matrix}
\enspace \right],
\begin{tikzpicture}[overlay, remember picture]
\draw (a1.north west) rectangle (b1.south east);
\draw (a2.north west) rectangle (b2.south east);
\end{tikzpicture}$}
\end{equation*}
where the column $z_3$ is omitted, and the reduced matrices
\begin{equation*}
\bar{M}_f' = \scalebox{\matrixscale}{$\left[ \enspace
\begin{matrix}
\tikzmark{a1}{0} &   & \tikzmark{a2}{0} &  &   &   &   & \fbox{1} &   \\
0 &   & 0 &   &   &   &   &   & \fbox{1} \\
0 &   & \fbox{1} &   & 0 &   &   &   &   \\
1 & \fbox{1} & 0 &   &   & 0 &   &   &   \\
\fbox{1} & 0 & 0 &   &   &   &   &   &   \\
\tikzmark{b1}{0} &   & \tikzmark{b2}{0} &  &   &   & \fbox{1} &   &   \\
\end{matrix}
\enspace \right]
\begin{tikzpicture}[overlay, remember picture]
\draw (a1.north west) rectangle (b1.south east);
\draw (a2.north west) rectangle (b2.south east);
\end{tikzpicture}$},
\bar{M}_b' = \scalebox{\matrixscale}{$\left[ \enspace
\begin{matrix}
\tikzmark{a1}{0} &   & \tikzmark{a2}{0} &  &   &   &   & \fbox{1} &   \\
0 &   & 0 &   &   &   &   &   & \fbox{1} \\
0 &   & \fbox{1} &   & 0 &   &   &   &   \\
\fbox{1} &   & 0 &   &   & 0 &   &   &   \\
0 & \fbox{1} & 0 &   &   &   &   &   &   \\
\tikzmark{b1}{0} &   & \tikzmark{b2}{0} &  &   &   & \fbox{1} &   &   \\
\end{matrix}
\enspace \right].
\begin{tikzpicture}[overlay, remember picture]
\draw (a1.north west) rectangle (b1.south east);
\draw (a2.north west) rectangle (b2.south east);
\end{tikzpicture}$}
\end{equation*}
We find that $\sigma_f' = [543612]$ and $\sigma_b' = [453612]$, and so $\sigma_f' = \sigma_b' (1 \, 2)$, where $(1, 2)$ is indeed an inversion.

The induction step of Proposition \ref{P:PermutationInequality} reduces the matrices
\begin{equation*}
M_f = \scalebox{\matrixscale}{$\left[ \enspace
\begin{matrix}
\tikzmark{a1}{0} & \hphantom{\fbox{1}} & \tikzmark{a2}{0} & 1 & \hphantom{\fbox{1}} &   &   & 1 &   \\
0 &   & 0 & 1 &   &   &   &   & 1 \\
0 &   & 1 & 0 & 1 &   &   &   &   \\
1 &   & 0 & 0 &   & 1 &   &   &   \\
1 & 1 & 0 & 0 &   &   &   &   &   \\
\tikzmark{b1}{0} &   & 0 & \tikzmark{b2}{1} &   &   & 1 &   &   \\
\end{matrix}
\enspace \right]
\begin{tikzpicture}[overlay, remember picture]
\draw (a1.north west) rectangle (b1.south east);
\draw (a2.north west) rectangle (b2.south east);
\end{tikzpicture}$},
M_g = \scalebox{\matrixscale}{$\left[ \enspace
\begin{matrix}
\tikzmark{a1}{0} & \hphantom{\fbox{1}} & \tikzmark{a2}{0} & 0 & \hphantom{\fbox{1}} &   &   & 1 &   \\
0 &   & 0 & 1 &   &   &   &   & 1 \\
0 &   & 1 & 0 & 1 &   &   &   &   \\
1 &   & 0 & 0 &   & 1 &   &   &   \\
1 & 1 & 0 & 0 &   &   &   &   &   \\
\tikzmark{b1}{0} &   & 0 & \tikzmark{b2}{0} &   &   & 1 &   &   \\
\end{matrix}
\enspace \right]
\begin{tikzpicture}[overlay, remember picture]
\draw (a1.north west) rectangle (b1.south east);
\draw (a2.north west) rectangle (b2.south east);
\end{tikzpicture}$},
M_b = \scalebox{\matrixscale}{$\left[ \enspace
\begin{matrix}
\tikzmark{a1}{0} & \hphantom{\fbox{1}} & \tikzmark{a2}{0} & 0 & \hphantom{\fbox{1}} &   &   & 1 &   \\
0 &   & 0 & 1 &   &   &   &   & 1 \\
0 &   & 1 & 0 & 1 &   &   &   &   \\
1 &   & 0 & 0 &   & 1 &   &   &   \\
0 & 1 & 0 & 0 &   &   &   &   &   \\
\tikzmark{b1}{0} &   & 0 & \tikzmark{b2}{0} &   &   & 1 &   &   \\
\end{matrix}
\enspace \right],
\begin{tikzpicture}[overlay, remember picture]
\draw (a1.north west) rectangle (b1.south east);
\draw (a2.north west) rectangle (b2.south east);
\end{tikzpicture}$}
\end{equation*}
to
\begin{equation*}
\bar{M}_f = \scalebox{\matrixscale}{$\left[ \enspace
\begin{matrix}
\tikzmark{a1}{0} &   & 0 & \tikzmark{a2}{1} &   &   &   & \fbox{1} &   \\
0 &   & 0 & 1 &   &   & \fbox{1} &   & 0 \\
0 &   & \fbox{1} & 0 & 0 &   &   &   &   \\
1 & \fbox{1} & 0 & 0 &   & 0 &   &   &   \\
\fbox{1} & 0 & 0 & 0 &   &   &   &   &   \\
\tikzmark{b1}{0} &   & \tikzmark{b2}{0} & \fbox{1} &   &   & 0 &   &   \\
\end{matrix}
\enspace \right]
\begin{tikzpicture}[overlay, remember picture]
\draw (a1.north west) rectangle (b1.south east);
\draw (a2.north east) rectangle (b2.south west);
\end{tikzpicture}$},
\bar{M}_g = \scalebox{\matrixscale}{$\left[ \enspace
\begin{matrix}
\tikzmark{a1}{0} &   & \tikzmark{a2}{0} & 0 &   &   &   & \fbox{1} &   \\
0 &   & 0 & \fbox{1} &   &   &   &   & 0 \\
0 &   & \fbox{1} & 0 & 0 &   &   &   &   \\
1 & \fbox{1} & 0 & 0 &   & 0 &   &   &   \\
\fbox{1} & 0 & 0 & 0 &   &   &   &   &   \\
\tikzmark{b1}{0} &   & 0 & \tikzmark{b2}{0} &   &   & \fbox{1} &   &   \\
\end{matrix}
\enspace \right]
\begin{tikzpicture}[overlay, remember picture]
\draw (a1.north west) rectangle (b1.south east);
\draw (a2.north west) rectangle (b2.south east);
\end{tikzpicture}$},
\bar{M}_b = \scalebox{\matrixscale}{$\left[ \enspace
\begin{matrix}
\tikzmark{a1}{0} &   & \tikzmark{a2}{0} & 0 &   &   &   & \fbox{1} &   \\
0 &   & 0 & \fbox{1} &   &   &   &   & 0 \\
0 &   & \fbox{1} & 0 & 0 &   &   &   &   \\
\fbox{1} &   & 0 & 0 &   & 0 &   &   &   \\
0 & \fbox{1} & 0 & 0 &   &   &   &   &   \\
\tikzmark{b1}{0} &   & 0 & \tikzmark{b2}{0} &   &   & \fbox{1} &   &   \\
\end{matrix}
\enspace \right].
\begin{tikzpicture}[overlay, remember picture]
\draw (a1.north west) rectangle (b1.south east);
\draw (a2.north west) rectangle (b2.south east);
\end{tikzpicture}$}
\end{equation*}
We find that $\sigma_f = [543621]$, $\sigma_{g} = [543261]$, and $\sigma_b = [453261]$. Thus $\sigma_f = \sigma_{g} (4 \, 5)$, $\sigma_{g} = \sigma_f' (4 \, 5 \, 6)$, and $\sigma_b = \sigma_b' (6 \, 5 \, 4)$.
\end{ex}

\begin{coro}
\label{coro:coker_f_b}
Let $f\colon Z\hookrightarrow X$ be a monomorphism, and let $f_b\colon Z\hookrightarrow X$ be the associated bar-to-bar monomorphism. Let $a_1\le a_2 \le \ldots \le a_n$ be the start-points of the bars of $X$, and let $b_1\le b_2 \le \ldots \le b_n$ be the degrees of the non-zero columns of $\bar M_f$.  
Then 
\begin{equation*}
\coker f = \bigoplus_{j=1}^n K(a_j , b_{\sigma_f (j)}) \qquad \text{ and }
\qquad \coker f_b = \bigoplus_{j=1}^n K(a_j , b_{\sigma_b (j)}) .
\end{equation*}
\end{coro}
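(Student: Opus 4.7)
The plan is to derive both decompositions by a direct application of the standard persistent homology algorithm to the matrices $M_f$ and $M_b$, combined with Proposition~\ref{P:PermutationInequality} to ensure that the end-point multiset is the same in both cases.

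First I would recall the setup from Section~\ref{sec:free_pres_mono} and Remark~\ref{rmk:pers_algo}: the matrix $M_f$ represents the morphism $[f_G \; i_X] \colon G_Z \oplus R_X \to G_X$, whose cokernel is $\coker f$. Its rows, sorted by degree, are indexed by the free generators of $G_X$ whose degrees are precisely the sorted start-points $a_1 \le \cdots \le a_n$ of the bars of $X$, and its columns, sorted by degree, are indexed by $C_Z \sqcup C_X$, with degrees equal either to start-points of bars of $Z$ or to (finite) end-points of bars of $X$. The standard algorithm reduces $M_f$ by left-to-right column operations to $\bar M_f$ and then reads off the barcode of $\coker f$ from the pivot pairing: each nonzero column $c$ whose lowest nonzero entry lies in row $r$ contributes a bar $K(\deg(r),\deg(c))$, and any unpaired row gives an infinite bar. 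Under the running assumption that $X$ has no infinite bars, the submatrix of $M_f$ on $C_X$ is an $n \times n$ permutation matrix, so every row of $\bar M_f$ is paired with some nonzero column; thus $\sigma_f$ is a permutation of $\{1,\ldots,n\}$, we obtain exactly $n$ finite bars, and the $k$-th nonzero column (of degree $b_k$) pairs with row $\sigma_f(k)$ (of degree $a_{\sigma_f(k)}$), giving
\[
\coker f \cong \bigoplus_{k=1}^n K(a_{\sigma_f(k)}, b_k),
\]
which matches the stated decomposition after reindexing along the bijection $\sigma_f$.

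The same reasoning applied to $M_b$ yields $\coker f_b \cong \bigoplus_{k=1}^n K(a_{\sigma_b(k)}, b_k')$, where $b_k'$ denotes the $k$-th degree in the sorted list of nonzero columns of $\bar M_b$. Here Proposition~\ref{P:PermutationInequality} is essential: it guarantees that the nonzero columns of $\bar M_b$ occupy the same positions as those of $\bar M_f$, so that $b_k' = b_k$ for all $k$ and both formulas use the same sequence $b_1 \le \cdots \le b_n$. The general case where $X$ has infinite bars is recovered by the convention $b_j^x = \infty$ stated in Section~\ref{subsec:main_mono}, which does not affect the pairing argument.

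The proof involves no genuinely difficult step on top of the machinery already assembled. The main care required is to unwind the pairings encoded by $\sigma_f$ and $\sigma_b$ correctly, carefully tracking the degrees attached to sorted rows and sorted nonzero columns, and to rely on Remark~\ref{rmk:pers_algo} for the validity of the persistent homology algorithm in the setting of graded free presentations rather than boundary matrices of filtered simplicial complexes.
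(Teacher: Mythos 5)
Your proof is correct and takes essentially the same route as the paper's: it reads off the barcode of each cokernel from the pivot pairings of the sorted row degrees (start-points $a_j$) with the sorted degrees of the nonzero columns of the reduced matrix, and invokes Proposition~\ref{P:PermutationInequality} to identify the nonzero-column degrees of $\bar M_b$ with the $b_1\le\cdots\le b_n$ coming from $\bar M_f$. The additional care you take with the reindexing of $\bigoplus_k K(a_{\sigma_f(k)},b_k)$ into the stated form and with the infinite-bar convention simply spells out what the paper's shorter argument leaves implicit.
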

\begin{proof}
By Proposition \ref{P:PermutationInequality}, the real numbers $b_1\le b_2 \le \ldots \le b_n$ are also the degrees of the non-zero columns of $\bar M_b$. By design of the persistent homology algorithm, the barcode decomposition of $\coker f$ and $\coker f_b$ is then determined by pairing start-points $\{a_j\}$ with end-points $\{b_j\}$ following the permutations $\sigma_f$ and $\sigma_b$ respectively, and the claim follows.
\end{proof}

We state below the rearrangement inequality following \cite{vince1990rearrangement}. Since the statement we need is slightly different from those we found in the literature, we include a short proof, which is a slight modification of the argument in \cite{vince1990rearrangement} and can be found also in \cite[p.~82]{steele2004cauchy}.

\begin{thm}[Rearrangement inequality]
\label{thm:rearr_ineq}
Let $g_1,g_2,\ldots ,g_n$ be real valued functions defined on an interval $I\subseteq \mathbb{R}$ such that
$g_{k+1} -g_k$ is a non-decreasing function, for all $k\in \{1,\ldots,n-1\}$,
and let $b_1\le b_2\le  \ldots \le  b_n$ be a sequence of elements of $I$. If $\rho \le \sigma$ in $S_n$, then
\begin{equation*}
\sum_{k=1}^n g_k (b_{\rho(k)}) \ge \sum_{k=1}^n g_k (b_{\sigma (k)}) .
\end{equation*}
\end{thm}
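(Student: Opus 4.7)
The plan is to reduce the statement to the case of a single inversion, then verify that case by exploiting the hypothesis on successive differences $g_{k+1}-g_k$.

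By the definition of $\rho \le \sigma$ given just before the theorem, we may write $\sigma = \rho \tau_1 \cdots \tau_m$ where each $\tau_\ell$ is an inversion of $\rho_\ell := \rho \tau_1 \cdots \tau_{\ell-1}$. Proceeding by induction on $m$, it suffices to prove the inequality when $m=1$, i.e.\ when $\sigma = \rho \cdot (i\, j)$ for some transposition $(i\,j)$ that is an inversion of $\rho$: $i<j$ and $\rho(i)<\rho(j)$. Once this single-swap case is established, the induction step is immediate by chaining $\sum_k g_k(b_{\rho_\ell(k)}) \ge \sum_k g_k(b_{\rho_{\ell+1}(k)})$ along the composition.

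For the single-swap case, all summands with index $k \notin \{i,j\}$ cancel, so the desired inequality reduces to
\[
g_i(b_{\rho(i)}) + g_j(b_{\rho(j)}) \ge g_i(b_{\rho(j)}) + g_j(b_{\rho(i)}),
\]
which after regrouping is exactly
\[
(g_j - g_i)(b_{\rho(j)}) \ge (g_j - g_i)(b_{\rho(i)}).
\]
Since $\rho(i)<\rho(j)$ and the sequence $(b_k)$ is nondecreasing, we have $b_{\rho(i)} \le b_{\rho(j)}$, so it is enough to show that $g_j - g_i$ is a nondecreasing function on $I$. But $i<j$ gives the telescoping decomposition
\[
g_j - g_i = \sum_{k=i}^{j-1} (g_{k+1} - g_k),
\]
and each summand is nondecreasing by hypothesis; a sum of nondecreasing functions is nondecreasing, which closes the argument.

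There is no real obstacle: the only subtle point is ensuring in the inductive step that the $\tau_\ell$ act as actual inversions of the current permutation $\rho_\ell$ (not merely of $\rho$), but this is precisely how the order relation $\le$ on $S_n$ was defined, and the reduction of $\sum_k g_k(b_{\rho_\ell(k)}) - \sum_k g_k(b_{\rho_{\ell+1}(k)})$ to the two-term comparison above only uses that $\tau_\ell$ swaps two indices whose images under $\rho_\ell$ are in increasing order, which is exactly what "inversion of $\rho_\ell$" guarantees.
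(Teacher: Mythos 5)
Your proof is correct and follows essentially the same route as the paper: reduce to a single inversion, rewrite the difference of sums as a comparison of $g_j-g_i$ at $b_{\rho(i)}\le b_{\rho(j)}$, and use monotonicity of $g_j-g_i$. The only addition is that you make explicit the telescoping argument showing $g_j-g_i$ is non-decreasing, which the paper leaves implicit.
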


\begin{proof}
Since the argument we present can be iterated, it is enough to prove the statement for $\sigma = \rho \tau$ where $\tau = (i \, j)$ is an inversion: $i< j$ and $\rho (i) < \rho (j)$. We have
\begin{align*}
\sum_{k=1}^n g_k (b_{\rho(k)}) - \sum_{k=1}^n g_k (b_{\sigma (k)})  &= g_i(b_{\rho (i)}) + g_j(b_{\rho (j)}) - g_i(b_{\sigma (i)}) - g_j(b_{\sigma (j)}) \\
&= g_i(b_{\rho (i)}) + g_j(b_{\rho (j)}) - g_i(b_{\rho (j)}) - g_j(b_{\rho (i)}) \\
&= \left( g_j(b_{\rho (j)}) - g_i(b_{\rho (j)}) \right) - \left( g_j(b_{\rho (i)}) - g_i(b_{\rho (i)}) \right) \ge 0 ,
\end{align*}
where the last inequality follows from $b_{\rho (i)}\le b_{\rho (j)}$ and from the fact that $g_j - g_i$ is non-decreasing.
\end{proof}

\begin{coro}
\label{coro:barlength_ineq}
Let $a_1\le a_2\le  \ldots \le  a_n$ and $b_1\le b_2\le  \ldots \le  b_n$ be sequences of real numbers, and let $p\in [1,\infty )$.  If $\rho \le \sigma$ in $S_n$, then
\begin{equation*}
\sum_{k=1}^n \lvert a_k - b_{\rho (k)}\rvert^p \le \sum_{k=1}^n \lvert a_k - b_{\sigma (k)}\rvert^p .
\end{equation*}
\end{coro}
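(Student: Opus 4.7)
The plan is to derive the corollary as a direct consequence of Theorem \ref{thm:rearr_ineq}, applied to the functions $g_k(b) := -|a_k - b|^p$ on $I = \R$. With this choice, the conclusion of the theorem, $\sum_k g_k(b_{\rho(k)}) \ge \sum_k g_k(b_{\sigma(k)})$, becomes the desired inequality after multiplying through by $-1$. So the task reduces entirely to verifying the hypothesis of Theorem \ref{thm:rearr_ineq}: for every $k \in \{1, \ldots, n-1\}$, the function
\[
\varphi_k(b) := g_{k+1}(b) - g_k(b) = |a_k - b|^p - |a_{k+1} - b|^p
\]
is non-decreasing on $\R$, given that $a_k \le a_{k+1}$.

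Verifying this monotonicity is the main (and essentially only) step. I would proceed by a short case analysis on the position of $b$ relative to $a_k$ and $a_{k+1}$. Away from these two points $\varphi_k$ is differentiable with
\[
\varphi_k'(b) = p\,|a_k - b|^{p-1} \operatorname{sign}(b - a_k) - p\,|a_{k+1} - b|^{p-1} \operatorname{sign}(b - a_{k+1}).
\]
Splitting into the three regions $b < a_k$, $a_k < b < a_{k+1}$, and $b > a_{k+1}$, and using $a_k \le a_{k+1}$ together with the fact that $t \mapsto t^{p-1}$ is non-decreasing on $[0, \infty)$ for $p \ge 1$, one checks that $\varphi_k'(b) \ge 0$ in each region: in the outer regions the two absolute values have the same sign and the claim reduces to comparing $|a_k - b|^{p-1}$ with $|a_{k+1} - b|^{p-1}$, while in the middle region both summands in $\varphi_k'(b)$ are manifestly nonnegative. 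Since $\varphi_k$ is continuous on all of $\R$, this implies $\varphi_k$ is non-decreasing.

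The main obstacle is purely technical: at $p = 1$ the function $b \mapsto |a - b|^p$ fails to be differentiable at $b = a$, so the derivative computation breaks down at the two points $a_k, a_{k+1}$. However, for $p = 1$ the function $\varphi_k$ is piecewise linear and its one-sided slopes computed in the three regions remain nonnegative, so the argument proceeds without modification. Having verified the hypothesis of Theorem \ref{thm:rearr_ineq} in all cases, the corollary follows immediately.
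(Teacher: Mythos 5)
Your proposal is correct and is essentially the paper's own proof: the paper also deduces the corollary from Theorem~\ref{thm:rearr_ineq} applied to $g_k := -|a_k - x|^p$, stating the monotonicity of $g_{k+1}-g_k$ as an easy check. Your case analysis of the derivative (and the separate treatment of the kinks when $p=1$) is a correct, more detailed verification of that same step.
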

\begin{proof}
Let $h_k (x) = \lvert a_k -x\rvert^p$. It is easy to check that the function $h_{k+1} -h_k$ is non-increasing for all $k \in \{1, \ldots ,n-1\}$,
so we can apply Theorem \ref{thm:rearr_ineq} to the sequence of functions $g_k \coloneqq  -h_k$.
\end{proof}

\begin{thm}
\label{thm:mono-bar}
For any monomorphism $f \colon Z \hookrightarrow X$ it is possible to determine (via Algorithm \ref{alg:btb}) a bar-to-bar monomorphism $f_b \colon Z \hookrightarrow X$ such that $\| \coker f_b \|_p \le \| \coker f \|_p$, for all $p\in [1,\infty]$.
\end{thm}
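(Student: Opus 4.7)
The plan is a straightforward synthesis of the three preceding results. First I would invoke Corollary~\ref{coro:coker_f_b}, obtaining barcode decompositions
\[
\coker f \cong \bigoplus_{j=1}^n K(a_j, b_{\sigma_f(j)}), \qquad \coker f_b \cong \bigoplus_{j=1}^n K(a_j, b_{\sigma_b(j)})
\]
that share the same sorted sequences of start-points $a_1 \le \cdots \le a_n$ and of endpoints $b_1 \le \cdots \le b_n$; the endpoints coincide because, by the first claim of Proposition~\ref{P:PermutationInequality}, $\bar M_f$ and $\bar M_b$ have their nonzero columns in the same positions. Thus the two cokernels differ only in the permutation used to pair start-points with endpoints.

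Next, the second claim of Proposition~\ref{P:PermutationInequality} supplies the key comparison $\sigma_b \le \sigma_f$ in the permutation poset on $S_n$. For $p \in [1,\infty)$, expanding the definition of the $p$-norm gives
\[
\|\coker f\|_p^p = \sum_{j=1}^n |b_{\sigma_f(j)} - a_j|^p, \qquad \|\coker f_b\|_p^p = \sum_{j=1}^n |b_{\sigma_b(j)} - a_j|^p,
\]
and applying Corollary~\ref{coro:barlength_ineq} with $\rho = \sigma_b$ and $\sigma = \sigma_f$ to the sorted sequences $(a_j)$ and $(b_j)$ delivers the desired $\|\coker f_b\|_p \le \|\coker f\|_p$ directly. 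The case $p = \infty$ is then obtained by passing to the limit $p \to \infty$ using the identity $\|x\|_\infty = \lim_{p \to \infty}\|x\|_p$ recorded in Section~\ref{subsec:pnorms}, which applies because each cokernel decomposes as a finite direct sum of bars. Infinite bars in $X$ are handled by the convention $b_j = \infty$ indicated at the start of Section~\ref{subsec:main_mono}: since $\sigma_f$ and $\sigma_b$ permute the same multiset of endpoints, $\coker f$ and $\coker f_b$ contain the same number of infinite bars, so $\|\coker f\|_p$ is infinite precisely when $\|\coker f_b\|_p$ is, and the inequality holds trivially in that case.

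All the substantive work was already performed in Proposition~\ref{P:PermutationInequality} (the poset comparison $\sigma_b \le \sigma_f$, obtained by induction on $|C_Z|$ via the rightmost-matched-column construction) and in Corollary~\ref{coro:barlength_ineq} (the rearrangement inequality for powers of differences). The present theorem is essentially a bookkeeping step combining them with the finite-to-infinite passage for $p$, and I do not anticipate any further obstacle.
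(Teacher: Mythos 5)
Your proposal is correct and follows essentially the same route as the paper's proof: invoking Corollary~\ref{coro:coker_f_b} for the barcode decompositions, the comparison $\sigma_b \le \sigma_f$ from Proposition~\ref{P:PermutationInequality}, the rearrangement inequality of Corollary~\ref{coro:barlength_ineq} for $p<\infty$, and the limit $p\to\infty$ for the remaining case. The extra remark on infinite bars is a harmless elaboration of the convention already stated at the start of Section~\ref{subsec:main_mono}.
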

\begin{proof}
First, assume $p\in [1,\infty)$. The persistence modules $\coker f$ and $\coker f_b$ have barcode decompositions as in Corollary \ref{coro:coker_f_b}. Then, the claim follows from Corollary \ref{coro:barlength_ineq} applied to the permutations $\sigma_b \le \sigma_f$ (Proposition \ref{P:PermutationInequality}). The claim for $p=\infty$ follows from taking the limit for $p\to \infty$ of both sides of the inequality $\left\| \coker b \right\|_p \le \left\| \coker f \right\|_p$, recalling that $\displaystyle\lim_{p\to\infty}\|u\|_p=\|u\|_\infty$ for any vector $u\in \mathbb{R}^n$ (Section \ref{subsec:pnorms}). 
\end{proof}

\subsection{Bar-to-bar epimorphisms}
\label{subsec:main_epi}
A similar result to Theorem \ref{thm:mono-bar} exists for epimorphisms and their kernels. To show this, we use a duality argument. The dualization of persistence modules has been studied extensively, see e.g.\ \cite{bauer2015induced,miller2020essential,BauerSchmahl2023}. Here we dualize tame persistence modules indexed by $[0, \infty)$, which requires some special handling since in the setting of this article bars are only supported on left-closed right-open intervals. In this subsection, we abuse the terminology introduced in Section \ref{subsec:persmod} by calling bars more general persistence modules with interval support and pointwise dimension at most $1$, including persistence modules supported on intervals of the form $(a,b]$ instead of $[a,b)$. To simplify the exposition, we explicitly work with finite direct sums of bars instead of general tame persistence modules, which are only equal to direct sums of bars up to isomorphism. 

\begin{defn}
Let $\mathsf{Bar}_{[0, \infty)}$ be the full subcategory of tame persistence modules whose objects are finite direct sums of bars. Similarly, let $\mathsf{Bar}_{(-\infty, 0]}$ be the category of finite direct sums of bars indexed by the poset $(-\infty, 0]$ with the usual order. These categories are abelian and we can represent morphisms as matrices of morphisms between summands.
\end{defn}

We consider the contravariant functor $(-)^\vee \colon \mathsf{Bar}_{[0, \infty)} \to \mathsf{Bar}_{(-\infty, 0]}$ sending an object $X$ in $\mathsf{Bar}_{[0, \infty)}$ to the functor $X^\vee \colon (-\infty, 0] \to \Vect_\mathit{K}$ defined, for all $s \leq t$ in $(-\infty, 0]$, by $X^\vee_t \coloneqq \hom(X_{-t}, K)$ and $X^\vee_{s \leq t} \coloneqq \hom(X_{-t \leq -s}, K)$. 
Similarly, a contravariant functor $\mathsf{Bar}_{[0, \infty)} \to \mathsf{Bar}_{(-\infty, 0]}$ is defined, which we also denote by $(-)^\vee$ with an abuse of notation. Both functors $(-)^\vee$ are exact. 

If $X=K(a,b)$, we observe that $X^\vee$ is also a bar, but its support (i.e., the set of poset elements for which the functor $X^\vee$ takes a nonzero value) is the left-open, right-closed interval $(-b,-a]$. In this article, we are considering bars whose support is a left-closed, right-open interval in $\mathbb{R}$. To fix this, we can consider the \textbf{pointwise direct limit} functor $\varinjlim_{[0,-)} (-) \colon \mathsf{Bar}_{[0, \infty)} \to \mathsf{Bar}_{[0, \infty)}$ sending $X$ to the persistence module whose value at $a$ is $\varinjlim_{t < a} X_t$ and whose transition functions are naturally defined. 
If $X=K(a,b)$, applying the pointwise direct limit functor yields the bar supported on $(a,b]$, whose dual $K(a,b)^\vee$ is the bar $K(-b,-a)$ in $\mathsf{Bar}_{(-\infty, 0]}$, which is supported on $[-b,-a)$. 
Similarly, one defines the pointwise direct limit functor $\varinjlim_{(-\infty,-)} (-) \colon \mathsf{Bar}_{(-\infty, 0]} \to \mathsf{Bar}_{(-\infty, 0]}$. The pointwise direct limit functors are exact. Applying the functor $(-)^\vee$ after the pointwise direct limit functor is therefore an exact functor, which we denote by $(-)^\dagger$.

To summarize, we have contravariant exact functors
\[
(-)^\dagger \colon \mathsf{Bar}_{[0, \infty)}  \to  \mathsf{Bar}_{(-\infty, 0]}  \qquad \text{and} \qquad  (-)^\dagger \colon \mathsf{Bar}_{(-\infty, 0]}  \to  \mathsf{Bar}_{[0, \infty)} 
\]
sending a bar $K(a,b)$ supported on $[a,b)$ to the bar $K(-b,-a)$ supported on $[-b,-a)$, and
extended to the rest of the objects by additivity. 
These functors send morphisms to their transpose (when viewed as matrices). More precisely, given a morphism $f \colon \bigoplus_i K(a_i,b_i) \to \bigoplus_j K(c_j,d_j)$ written as the matrix $[f_{i,j}]_{i,j}$ with $f_{i,j} \colon K(a_i,b_i) \to K(c_j,d_j)$ for all $i$ and $j$, the morphism $f^\dagger \colon \bigoplus_j K(-d_j,-c_j) \to \bigoplus_i K(-b_i,-a_i)$ can be written as the matrix $[f_{j, i}^\dagger]_{i, j}$.

As a consequence of the exactness of $(-)^\dagger$, for any morphism $f\colon X\to Y$ in $\mathsf{Bar}_{[0, \infty)}$ (resp. in $\mathsf{Bar}_{(-\infty,0]}$) we have
\[
(\ker f)^\dagger \cong \coker f^{\dagger} \qquad \text{and} \qquad (\coker f)^\dagger \cong \ker f^{\dagger}.
\]
Since the functors $(-)^\dagger$ send morphisms to their transpose, they send bar-to-bar morphisms to bar-to-bar morphisms. In particular, they send bar-to-bar monomorphisms to bar-to-bar epimorphisms, and vice-versa. It is also clear that the functors $(-)^\dagger$ preserve $p$-norms: $\| X^\dagger\|_p = \| X\|_p$. 

Moreover, we can apply the theory of Section \ref{subsec:main_mono} to the category $\mathsf{Bar}_{(-\infty, 0]}$, and in particular apply Theorem \ref{thm:mono-bar}. 
We conclude with the following result:

\begin{thm}
\label{thm:epi-bar}
For any epimorphism $f \colon Z \twoheadrightarrow X$ of persistence modules, it is possible to determine a bar-to-bar epimorphism $f_b \colon Z \twoheadrightarrow X$ such that $\left\| \ker f_b \right\|_p \le \left\| \ker f \right\|_p$, for all $p\in [1,\infty]$.
\end{thm}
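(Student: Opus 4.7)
My plan is to mirror the argument of Section \ref{subsec:main_mono} in a dual setting. The analogue of a minimal free resolution is a minimal cofree copresentation $0\to Z\to G_Z \to R_Z$, where the cofree summands are bars of the form $K(0,b)$ playing the role that $K(a,\infty)$ plays in the free case. Given an epimorphism $f:Z\twoheadrightarrow X$, the induced maps of copresentations let us identify $\ker f$ with the kernel of a single map $G_Z \to R_Z\oplus G_X$. From this I construct a presentation matrix $M_f$ whose columns are indexed by cogenerators of $Z$ and whose rows are indexed by cogenerators of $X$ together with corelations of $Z$, with both rows and columns sorted by non-decreasing degree.

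Barcode extraction for $\ker f$ now proceeds by top-to-bottom row reductions, with the pairing given by the rightmost nonzero entry of each nonzero row, rather than the lowest nonzero entry of each column. I would then define a dual version of Algorithm \ref{alg:btb} which iterates over cogenerator-type rows (instead of relation-type columns) in the order dictated by degree, producing a partially reduced matrix, a bar-to-bar matrix $M_b$ presenting a bar-to-bar epimorphism $f_b:Z\twoheadrightarrow X$, and an injective function between rows analogous to $r_{\max}$.

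The core of the proof is the dual of Proposition \ref{P:PermutationInequality}: the reduced matrices $\bar M_f$ and $\bar M_b$ have the same nonzero rows, and the permutations $\sigma_f, \sigma_b$ defined by the rightmost-entry pairings satisfy $\sigma_f \ge \sigma_b$. This is established by induction on the number of cogenerator rows of $Z$, following the same structure as the primal argument with all matrix operations transposed. A dual of Corollary \ref{coro:coker_f_b} then yields barcode decompositions of $\ker f$ and $\ker f_b$ sharing a common multiset of sorted endpoint values, but paired via $\sigma_f$ and $\sigma_b$ respectively. Applying Corollary \ref{coro:barlength_ineq} gives $\|\ker f_b\|_p \le \|\ker f\|_p$ for $p \in [1,\infty)$, and the $p=\infty$ case follows by taking limits exactly as in Theorem \ref{thm:mono-bar}.

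The main obstacle is bookkeeping: verifying that cofree copresentations in $\Tame$ behave well enough to support a dual pairing-uniqueness lemma (the analogue of Lemma \ref{lem:sigma_welldef}), and carefully tracking the effect of dualizing each matrix reduction step, especially the analogues of Propositions \ref{prop:columnsCZ} and \ref{prop:rmax}. Beyond this formal translation, no new inequality or inductive idea is required; each combinatorial step in Section \ref{subsec:main_mono} has a formally dual counterpart obtained by transposing matrices and swapping ``left-to-right column'' with ``top-to-bottom row''.
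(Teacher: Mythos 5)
Your proposal is correct and follows essentially the same route as the paper: Section \ref{subsec:main_epi} proves Theorem \ref{thm:epi-bar} precisely by dualizing Section \ref{subsec:main_mono} --- cofree copresentations with $\ker f = \ker(G_Z \to R_Z \oplus G_X)$, a dual bar-to-bar algorithm and permutation inequality, the rearrangement inequality via Corollary \ref{coro:barlength_ineq}, and the limit argument for $p=\infty$. One bookkeeping caution: for the recipe ``top-to-bottom row reductions, pairing by the rightmost nonzero entry of each row'' to extract $\ker f$ as the paper intends, the rows should be indexed by the cogenerators of $Z$ (whose degrees are the end-points of the kernel bars) and the columns by the cogenerators of $X$ together with the corelations of $Z$ --- the transpose of the orientation you describe --- and the dual of Algorithm \ref{alg:btb} iterates over the corelations of $Z$ (the dual of the relation columns $C_X$), not over cogenerator-type rows of $X$.
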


\begin{proof}
As for the case of monomorphisms, we assume that $X$ and $Z$ are finite direct sums of finite bars. We apply Theorem \ref{thm:mono-bar} to $f^\dagger \colon X^\dagger \hookrightarrow Z^\dagger$ to get a bar-to-bar monomorphism $g$. We set $f_b \coloneqq g^\dagger \colon Z \twoheadrightarrow X$ a bar-to-bar epimorphism, and we observe that
\[
\| \ker f_b \| = \| \coker g \| \leq \| \coker f^\dagger \| = \| \ker f \|.
\]
\end{proof}

\subsection{Comparing bar-to-bar morphisms with induced matchings}
\label{subsec:canonical}

In \cite{bauer2015induced} the authors introduce a construction similar to bar-to-bar morphisms, namely \textbf{induced matchings}. In particular, given persistence modules $X$ and $Z$ such that there exists a monomorphism $Z \hookrightarrow X$, the authors define a \textbf{canonical injection} from the multiset of bars of $Z$ to the multiset of bars of $X$, where for all real numbers $b \in [0,\infty]$ the $i^{\text{th}}$ longest bar of $Z$ with end-point $b$ is sent to the $i^{\text{th}}$ longest bar of $X$ with the same end-point. This injection induces a bar-to-bar monomorphism, which we define here:

\begin{defn}\label{def:inducedmono}
Let $Z$ and $X$ be persistence modules such that there exists a monomorphism $Z \hookrightarrow X$, and fix barcode decompositions $Z = \bigoplus_{i = 1}^m K(a_i^z, b_i^z)$ and $X = \bigoplus_{j = 1}^n K(a_j^x, b_j^x)$. The induced matching of \cite{bauer2015induced} then corresponds to a canonical injection $\varphi \colon \{1, \ldots, m\} \hookrightarrow \{1, \ldots, n\}$.

We define the \textbf{monomorphism induced by the canonical injection $\varphi$} as the monomorphism $f_{\varphi}\colon Z \hookrightarrow X$ given by
\[
f_{\varphi} = \bigoplus_{i = 1}^m (K(a_i^z, b_i^z) \hookrightarrow K(a_{\varphi(i)}^x, b_{\varphi(i)}^x)) \oplus \bigoplus_{j\in \{1,\ldots ,n\}\setminus \image \varphi} (0 \hookrightarrow K(a_{j}^x, b_{j}^x)).
\]
Note that this is a bar-to-bar monomorphism.
\end{defn}

\begin{rmk}
Let $f \colon Z\hookrightarrow X$ be a monomorphism. If the bars of $X$ all have distinct end-points, then the monomorphism induced by the canonical injection (Definition \ref{def:inducedmono}) coincides, up to isomorphism, with the bar-to-bar monomorphism $f_b$ as determined in Section \ref{subsec:main_mono}. This is because, in this case, there is only one bar-to-bar monomorphism from $Z$ to $X$ (up to isomorphism).
\end{rmk}

\begin{rmk}
In general, this is not necessarily the case.
For example, starting from the monomorphism $f \colon K(2,3)\hookrightarrow K(1,3)\oplus K(2,3)$ defined by
\[
f=(K(2,3)\hookrightarrow K(2,3))\oplus (0\hookrightarrow K(1,3)),
\]
then we have $f_{\varphi}=(K(2,3)\hookrightarrow K(1,3))\oplus (0\hookrightarrow K(2,3))$, while $f_b =f$. 

More generally, monomorphisms induced by canonical injections do not commute with direct sums of monomorphisms \cite[Example 5.8]{bauer2015induced}, while the bar-to-bar monomorphisms we introduced in Section \ref{subsec:main_mono} do, as an immediate consequence of their definition via Algorithm \ref{alg:btb}. That is, given $f \colon Z\hookrightarrow X$ and $f' \colon Z'\hookrightarrow X'$, it is not true in general that $(f\oplus f')_{\varphi}=f_{\varphi}\oplus f'_{\varphi}$, while $(f\oplus f')_{b}=f_{b}\oplus f'_{b}$ holds. 
\end{rmk}

Maintaining a close relation between a given monomorphism $f \colon Z\hookrightarrow X$ and the bar-to-bar monomorphism $f_b$ is instrumental to proving Proposition \ref{P:PermutationInequality}, the key technical result of Section \ref{subsec:main_mono}, and in turn Theorem \ref{thm:mono-bar} and the dual Theorem \ref{thm:epi-bar}.
Since the canonical injection \cite{bauer2015induced} does not depend on the specific given monomorphism between $Z$ and $X$, but just on the existence of such a monomorphisms, it is not possible to prove Proposition \ref{P:PermutationInequality} using the same strategy with the canonical injection instead of the bar-to-bar monomorphism we introduced. 

We conclude this section with two corollaries to Theorem \ref{thm:mono-bar} and Theorem \ref{thm:epi-bar}. 
Using Theorem \ref{thm:mono-bar}, the problem of minimizing the $p$-norm of cokernels of all possible monomorphisms between two given persistence modules can be solved by restricting to bar-to-bar monomorphisms. This simplification allows for an easy combinatorial proof of the fact that, among all bar-to-bar monomorphisms, the one induced by the canonical injection minimizes the $p$-norm of the cokernel. A dual result holds for epimorphisms.

\begin{coro}\label{coro:mono-canonical}
Let $Z$ and $X$ be two persistence modules such that there exists a monomorphism $Z \hookrightarrow X$. Then $\min_{f \colon Z \hookrightarrow X} \| \coker f \|_p = \| \coker f_{\varphi} \|_p$ for all $p\in [1,\infty]$, where $f_{\varphi}$ is the bar-to-bar monomorphism induced by the canonical injection (Definition \ref{def:inducedmono}).
\end{coro}
\begin{proof}
By Theorem \ref{thm:mono-bar}, it suffices to show the inequality $\| \coker f_{\varphi} \|_p\le \| \coker f \|_p$ for bar-to-bar monomorphisms $f \colon Z\hookrightarrow X$. Let $\{b_k\}_{k=1}^\ell$ be the set of distinct end-points of $X$. Then, by hypothesis that there exists a monomorphism $Z \hookrightarrow X$, we can decompose $Z = \bigoplus_{k=1}^\ell Z^{(k)}$ and $X = \bigoplus_{k=1}^\ell X^{(k)}$ where $Z^{(k)}$ (resp.\ $X^{(k)}$) is the direct sum of the bars in $Z$ (resp.\ $X$) with end-point $b_k$. Given a bar-to-bar monomorphism $f \colon Z \hookrightarrow X$, this induces monomorphisms $f^{(k)} \colon Z^{(k)} \hookrightarrow X^{(k)}$. We then observe that $f = \bigoplus_{k=1}^\ell f^{(k)}$, and so
\[
\| \coker f\|_p =  \left\| \left( \| \coker f^{(k)} \|_p \right)_{k=1,\ldots, \ell} \right\|_p.
\]
Since $p$-norms are nondecreasing with respect to the coordinate-wise order on $[0,\infty)^\ell$ (Section \ref{subsec:pnorms}), proving that $\| \coker f^{(k)}_{\varphi} \|_p \leq \| \coker f^{(k)} \|_p$ for each $k \in \{1, \ldots, \ell\}$ implies that $\| \coker f_{\varphi} \|_p \leq \| \coker f \|_p$.

Thus it suffices to prove the result in the case where the bars of $Z$ and $X$ all have the same end-point, which is what we assume for the rest of the proof. Denote by $b_0$ this common end-point and write $Z = \bigoplus_{i = 1}^m K(a_i^z, b_0)$ and $X = \bigoplus_{j = 1}^n K(a_j^x, b_0)$, where the $a_i^z$ and $a_j^x$ are in nondecreasing order. Define $a_i^z = b_0$ for $i \in \{m + 1, \ldots, n\}$.  Then every bar-to-bar monomorphism $f \colon Z \hookrightarrow X$ has the form
\[
f = \bigoplus_{i = 1}^n (K(a_i^z, b_0) \hookrightarrow K(a_{\alpha(i)}^x, b_0)),
\]
where $\alpha \colon \{1,\ldots, n\} \to \{1,\ldots ,n\}$ is a permutation and $K(b_0, b_0)$ denotes the zero module. By Remark \ref{rmk:ker_coker_bartobar}, 
\[
\coker f =  \bigoplus_{i = 1}^n K(a_{\alpha(i)}^x, a_i^z). 
\]
In particular, the bar-to-bar monomorphism induced by the canonical injection corresponds to the permutation $\alpha = \id$. For all $p \in [1, \infty)$, we then apply the rearrangement inequality of Corollary \ref{coro:barlength_ineq} to deduce that $\| \coker f_\varphi \|_p \leq \| \coker f \|_p$. The case $p = \infty$ follows by taking the limit (as in the proof of Theorem \ref{thm:mono-bar}).
\end{proof}

We now dualize the definitions and results for the case of epimorphisms.
An epimorphism $f \colon Z\twoheadrightarrow X$ induces a canonical injection \cite{bauer2015induced} from the multiset of bars of $X$ to the multiset of bars of $Z$, where for all $a\in [0,\infty]$, the $i^\text{th}$ longest bar of $X$ with start-point $a$ is sent to the $i^\text{th}$ longest bar of $Z$ with the same start-point. 

\begin{defn}\label{def:inducedepi}
Let $Z$ and $X$ be persistence modules such that there exists an epimorphism $Z \twoheadrightarrow X$, and fix barcode decompositions $Z = \bigoplus_{i = 1}^m K(a_i^z, b_i^z)$ and $X = \bigoplus_{j = 1}^n K(a_j^x, b_j^x)$. The induced matching of \cite{bauer2015induced} then corresponds to a canonical injection $\psi \colon \{1, \ldots, n\} \hookrightarrow \{1, \ldots, m\}$.
We define the \textbf{epimorphism induced by the canonical injection} $\psi$ as the epimorphism
$f_{\psi}\colon Z \twoheadrightarrow X$ given by
\[
f_{\psi} = \bigoplus_{j = 1}^n (K(a_{\psi(j)}^z, b_{\psi(j)}^z) \twoheadrightarrow K(a_{j}^x, b_{j}^x)) \oplus \bigoplus_{i\in \{1,\ldots ,m\}\setminus \image \psi} ( K(a_{i}^x, b_{i}^x)\twoheadrightarrow 0).
\]
\end{defn}

As in the case of monomorphisms, given an epimorphism $f \colon Z \twoheadrightarrow X$, the associated bar-to-bar epimorphisms $f_{\psi}$ and $f_b$ (see Section \ref{subsec:main_epi}) are not necessarily the same. 

\begin{coro}\label{coro:epi-canonical}
Let $Z$ and $X$ be two persistence modules such that there exists an epimorphism $Z \twoheadrightarrow X$. Then $\min_{f\colon Z\twoheadrightarrow X} \| \ker f \|_p = \| \ker f_{\psi} \|_p$ for all $p\in [1,\infty]$, where $f_{\psi}$ is a bar-to-bar epimorphism induced by the canonical injection (Definition \ref{def:inducedepi}).
\end{coro}
\begin{proof}
This proof is dual to that of Corollary \ref{coro:mono-canonical}.
By Theorem \ref{thm:epi-bar}, it suffices to show the inequality $\| \ker f_{\psi} \|_p\le \| \ker f \|_p$ for bar-to-bar epimorphisms $f \colon Z\twoheadrightarrow X$. Let $\{a_k\}_{k=1}^\ell$ be the set of distinct start-points of $Z$. We can decompose $Z = \bigoplus_{k=1}^\ell Z^{(k)}$ and $X = \bigoplus_{k=1}^\ell X^{(k)}$ where $Z^{(k)}$ (resp.\ $X^{(k)}$) is the direct sum of the bars in $Z$ (resp.\ $X$) with start-point $a_k$. Given a bar-to-bar epimorphism $f \colon Z \twoheadrightarrow X$, this induces epimorphisms $f^{(k)} \colon Z^{(k)} \twoheadrightarrow X^{(k)}$ such that $f = \bigoplus_{k=1}^\ell f^{(k)}$, hence
\[
\| \ker f\|_p =  \left\| \left( \| \ker f^{(k)} \|_p \right)_{k=1,\ldots, \ell} \right\|_p.
\]
As in the proof of Corollary \ref{coro:mono-canonical}, it is sufficient to prove the claim for each $f^{(k)}$, hence for the rest of the proof we assume that the bars of $Z$ and $X$ all have the same start-point. 
Denote by $a_0$ this common start-point and write $Z = \bigoplus_{i = 1}^m K(a_0, b_i^z)$ and $X = \bigoplus_{j = 1}^n K(a_0, b_j^x)$, where the $b_i^z$ and $b_j^x$ are in nonincreasing order. Define $b_j^x = a_0$ for $i \in \{n + 1, \ldots, m\}$. 
Then every bar-to-bar epimorphism $f \colon Z \twoheadrightarrow X$ has the form
\[
f = \bigoplus_{i = 1}^m (K(a_0,b_i^z) \twoheadrightarrow K(a_0,b_{\alpha(i)}^x)),
\]
where $\alpha \colon \{1,\ldots, m\} \to \{1,\ldots ,m\}$ is a permutation and $K(a_0, a_0)$ denotes the zero module. By Remark \ref{rmk:ker_coker_bartobar}, 
\[
\ker f =  \bigoplus_{i = 1}^m K(b_{\alpha(i)}^x,b_i^z). 
\]
In particular, the bar-to-bar epimorphism induced by the canonical injection corresponds to the permutation $\alpha = \id$. For all $p \in [1, \infty)$, we then apply the rearrangement inequality of Corollary \ref{coro:barlength_ineq} to deduce that $\| \ker f_\varphi \|_p \leq \| \ker f \|_p$. The case $p = \infty$ follows by taking the limit.
\end{proof}


\section{Noise systems and Wasserstein pseudometrics}
\label{sec:Wass_noise}

In this section we study algebraic Wasserstein pseudometrics between persistence modules. 
After introducing in Section \ref{subsec:pseudom_dp} a generalization of the pseudometrics associated with a noise system,  
we study in Section \ref{subsec:pnorms_C} noise systems determined by  $p$-norms of persistence modules and regular contours.
Section \ref{subsec:alg_p_pC} is devoted to the associated algebraic Wasserstein pseudometrics. 
For some choices of parameters, these pseudometrics have a combinatorial interpretation, as we show in Section \ref{subsec:alg_and_comb_WpC}. 
Finally, in Section \ref{subsec:alg_pC_distance} we present formulas to compute the algebraic Wasserstein pseudometric between persistence modules in some specific cases.

\subsection{Pseudometrics associated to noise systems}
\label{subsec:pseudom_dp}
Given a noise system $\ns$ and $p \in [1, \infty]$, in this section we will introduce pseudometrics $d^p_{\ns}$ between persistence modules.
These pseudometrics are a simple generalization to $p>1$ of the pseudometric associated to a noise system in \cite{scolamiero2017multidimensional} (see Section \ref{subsec:noise_induced_distances}), where $p=1$.
Although the statements in this section hold true for tame functors 
indexed by $[0,\infty)^r$ for every positive natural number $r$, as in \cite{scolamiero2017multidimensional}, we will limit the presentation to $r=1$, since this is the setting of the following sections.

\begin{defn}
Let $X$ and $Y$ be persistence modules. A \textbf{span} of $X,Y$ is a triplet $(Z,f, g)$ with $Z$ a persistence module and $f\colon  Z\to X$ and $g\colon Z \to Y$ morphisms between persistence modules. A span of $X,Y$ is therefore a diagram in $\Tame$ of the form
\[
X \xleftarrow{f} Z \xrightarrow{g} Y
\]
\end{defn}

\begin{defn}
\label{def:eps span}
Let $X$ and $Y$ be persistence modules, and let $\ns$ be a noise system. A span $X \xleftarrow{f} Z \xrightarrow{g} Y$ is called a \textbf{$(\varepsilon_1, \varepsilon_2, \varepsilon_3, \varepsilon_4)$-span} if 
\[
\ker f \in \ns_{\varepsilon_1}, \quad \coker f \in \ns_{\varepsilon_2}, \quad
\ker g \in \ns_{\varepsilon_3} \quad\text{and}\quad \coker g \in \ns_{\varepsilon_4.} 
\]
\end{defn}

\begin{defn}
\label{def:dpS}
Let $X$ and $Y$ be persistence modules, and let $\ns$ be a noise system. For $p\in [1,\infty]$ and $\varepsilon \in [0,\infty)$, we say that $X$ and $Y$ are $\varepsilon$-\textbf{close in $p$-norm} $\left\| \cdot \right\|_p$ if there exists a $(\varepsilon_1, \varepsilon_2, \varepsilon_3, \varepsilon_4)$-span $X \xleftarrow{f} Z \xrightarrow{g} Y$ for some $\varepsilon_1, \varepsilon_2, \varepsilon_3, \varepsilon_4 \in [0, \infty)$ such that $\left\| (\varepsilon_1, \varepsilon_2, \varepsilon_3, \varepsilon_4) \right\|_p \le \varepsilon$.
We define
\[
d^p_{\ns}(X,Y) \= \inf \left\{ \varepsilon \in [0,\infty) \mid \text{$X$ and $Y$ are $\varepsilon$-close in $p$-norm} \right\} ,
\]
adopting the convention $\inf \varnothing = \infty$. 
\end{defn}

Our next aim is to prove that $d^p_{\ns}$ is a pseudometric on $\Tame$. We start by generalizing Proposition 
$8.5$ in  \cite{scolamiero2017multidimensional} to our current framework. Even if the generalization is not difficult, we include the proof to highlight how the properties of $p$-norms on $\mathbb{R}^4$ are used. We note that a similar result can be obtained for a larger family of subadditive functions on $\mathbb{R}^4$ which include $p$-norms (see \cite{giunti2021amplitudes}, Section 2.1).
 
 \begin{prop}
\label{prop:additivity}
Let $F,G,H$ be persistence modules. Assume that $F$ and $G$ are $\varepsilon$-close in $p$-norm, and that $G$ and $H$ are $\tau$-close in $p$-norm. Then $F$ and $H$ are $(\varepsilon +\tau)$-close in $p$-norm.
\end{prop}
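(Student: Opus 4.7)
The plan is to follow the classical pullback construction used for the triangle inequality of noise-system pseudometrics in \cite{scolamiero2017multidimensional} (for $p=1$), and then invoke the Minkowski inequality for $\left\| \cdot \right\|_p$ on $\R^4$ at the very end. Fix $\eta > 0$. By Definition~\ref{def:dpS}, I would pick a $(\varepsilon_1, \varepsilon_2, \varepsilon_3, \varepsilon_4)$-span $F \xleftarrow{f_1} Z_1 \xrightarrow{g_1} G$ with $\left\| (\varepsilon_1, \varepsilon_2, \varepsilon_3, \varepsilon_4) \right\|_p \le \varepsilon + \eta$ and a $(\tau_1, \tau_2, \tau_3, \tau_4)$-span $G \xleftarrow{f_2} Z_2 \xrightarrow{g_2} H$ with $\left\| (\tau_1, \tau_2, \tau_3, \tau_4) \right\|_p \le \tau + \eta$. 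Then I would form the pullback $W$ of $g_1$ and $f_2$ in $\Tame$ (which exists since $\Tame$ is abelian), with projections $p_1 \colon W \to Z_1$ and $p_2 \colon W \to Z_2$, and consider the composite span $F \xleftarrow{f_1 p_1} W \xrightarrow{g_2 p_2} H$.

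The main step is to verify that this composite is a $(\varepsilon_1 + \tau_1, \varepsilon_2 + \tau_2, \varepsilon_3 + \tau_3, \varepsilon_4 + \tau_4)$-span. I would use the standard abelian-category facts for pullbacks: $\ker p_1 \cong \ker f_2$ and $\coker p_1$ embeds in $\coker f_2$ (the latter because the map $Z_1 / \im p_1 \to G / \im f_2$ induced by $g_1$ has vanishing kernel $g_1^{-1}(\im f_2) / \im p_1 = \im p_1 / \im p_1$). For the left leg of the span, the short exact sequence
\[
0 \longrightarrow \ker p_1 \longrightarrow \ker(f_1 p_1) \longrightarrow p_1(\ker(f_1 p_1)) \longrightarrow 0,
\]
together with the inclusion $p_1(\ker(f_1 p_1)) \hookrightarrow \ker f_1$, and the short exact sequence
\[
0 \longrightarrow \im f_1 / \im (f_1 p_1) \longrightarrow \coker(f_1 p_1) \longrightarrow \coker f_1 \longrightarrow 0,
\]
together with the surjection $\coker p_1 \twoheadrightarrow \im f_1 / \im(f_1 p_1)$, allow one to apply the closure of $\ns$ under subobjects, quotients, and extensions to conclude $\ker(f_1 p_1) \in \ns_{\varepsilon_1 + \tau_1}$ and $\coker(f_1 p_1) \in \ns_{\varepsilon_2 + \tau_2}$. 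The analogous argument on the right leg gives the remaining two noise bounds.

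Finally, the subadditivity of the $p$-norm on $\R^4$ (Section~\ref{subsec:pnorms}) gives
\[
\bigl\| (\varepsilon_i + \tau_i)_{i=1}^4 \bigr\|_p \le \bigl\| (\varepsilon_i)_{i=1}^4 \bigr\|_p + \bigl\| (\tau_i)_{i=1}^4 \bigr\|_p \le \varepsilon + \tau + 2\eta,
\]
so $F$ and $H$ are $(\varepsilon + \tau + 2\eta)$-close in $p$-norm, and letting $\eta \to 0$ concludes. The only real obstacle is the bookkeeping in the kernel/cokernel analysis of the composite morphisms $f_1 p_1$ and $g_2 p_2$; once this is done, the new ingredient compared to the $p = 1$ case of \cite{scolamiero2017multidimensional} is simply the Minkowski inequality on $\R^4$, which is what lets the four coordinatewise additive bounds assemble into the desired single bound in $\left\| \cdot \right\|_p$.
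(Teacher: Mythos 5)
Your proposal follows essentially the same route as the paper: pull back the two spans, bound the kernels and cokernels of the composite legs $f_1 p_1$ and $g_2 p_2$ coordinatewise (the paper cites Propositions 8.1 and 8.2 of \cite{scolamiero2017multidimensional} for exactly the pullback and composition facts you re-derive via your short exact sequences), and finish with Minkowski on $\R^4$. The only defect is the $\eta$-slack: the hypothesis that $F$ and $G$ are $\varepsilon$-close already furnishes, by Definition~\ref{def:dpS}, a span with $\left\| (\varepsilon_1,\varepsilon_2,\varepsilon_3,\varepsilon_4) \right\|_p \le \varepsilon$ exactly (no infimum is involved), and your closing step ``$F$ and $H$ are $(\varepsilon+\tau+2\eta)$-close for all $\eta>0$, let $\eta\to 0$'' does not by itself give $(\varepsilon+\tau)$-closeness, only the weaker bound $d^p_{\ns}(F,H)\le \varepsilon+\tau$; simply take $\eta=0$ throughout and the argument delivers the stated conclusion, matching the paper's proof.
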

\begin{proof}
By assumption there exists a $(\varepsilon_1, \varepsilon_2, \varepsilon_3, \varepsilon_4)$-span $F \xleftarrow{f'} X \xrightarrow{f''} G$ with $\varepsilon_1, \varepsilon_2, \varepsilon_3, \varepsilon_4 \in [0, \infty)$ such that $\left\| (\varepsilon_1, \varepsilon_2, \varepsilon_3, \varepsilon_4) \right\|_p \le \varepsilon$ and a $(\tau_1, \tau_2, \tau_3, \tau_4)$-span $G \xleftarrow{g'} Y \xrightarrow{g''} H$ with $\tau_1, \tau_2, \tau_3, \tau_4 \in [0, \infty)$ such that $\left\| (\tau_1, \tau_2, \tau_3, \tau_4) \right\|_p \le \tau$.
Consider the following diagram, where the square is a pullback:
\[
\begin{tikzcd}[column sep=small, row sep=small]
& & Z \arrow[dl, "f"'] \arrow[dr, "g"] & & \\
& X \arrow[dl, "f'"'] \arrow[dr, "f''"] & & Y \arrow[dl, "g'"'] \arrow[dr, "g''"] & \\
F & & G & & H
\end{tikzcd}
\]
By \cite[Proposition 8.1]{scolamiero2017multidimensional}, $\ker f \in \ns_{\tau_1}$ and $\coker f \in \ns_{\tau_2}$, hence by \cite[Proposition 8.2]{scolamiero2017multidimensional} $\ker f' f \in \ns_{\varepsilon_1 +\tau_1}$ and $\coker f' f \in \ns_{\varepsilon_2 +\tau_2}$. By a similar argument, $\ker g'' g \in \ns_{\varepsilon_3 +\tau_3}$ and $\coker g'' g \in \ns_{\varepsilon_4 +\tau_4}$. This proves that $F$ and $H$ are $\eta$-close in $p$-norm, where $\eta \= \left\|(\varepsilon_1 +\tau_1,\varepsilon_2 +\tau_2,\varepsilon_3 +\tau_3,\varepsilon_4 +\tau_4)\right\|_p$. Our claim follows from the inequality
\begin{align*}
\left\| (\varepsilon_1 +\tau_1,\varepsilon_2 +\tau_2,\varepsilon_3 +\tau_3,\varepsilon_4 +\tau_4)\right\|_p & \le \left\| (\varepsilon_1 ,\varepsilon_2 ,\varepsilon_3 ,\varepsilon_4 )\right\|_p + \left\|(\tau_1 ,\tau_2 ,\tau_3 ,\tau_4 )\right\|_p \\
& \leq \varepsilon +\tau
\end{align*}
which expresses the subadditivity of $\left\| \cdot \right\|_p$ and the hypotheses.  
\end{proof}

We are now ready to prove that $d^p_{\ns}$ is a pseudometric on $\Tame$.
\begin{prop}
\label{prop:dp_pseudometric}
Given $p\in [1,\infty]$ and a noise system $\ns$, the function $d^p_{\ns}$ in Definition \ref{def:dpS} is a pseudometric  on $\Tame$ (see Section \ref{subsec:noise_induced_distances}). 
\end{prop}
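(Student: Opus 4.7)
The plan is to verify the three defining conditions of a pseudometric in turn: symmetry, vanishing on isomorphic pairs, and the triangle inequality. The first two conditions follow essentially from the constructions, while the triangle inequality is where the real content of Proposition \ref{prop:additivity} is used.

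For symmetry, I would observe that a span $X \xleftarrow{f} Z \xrightarrow{g} Y$ is a $(\varepsilon_1,\varepsilon_2,\varepsilon_3,\varepsilon_4)$-span if and only if the same diagram read in the opposite direction, $Y \xleftarrow{g} Z \xrightarrow{f} X$, is a $(\varepsilon_3,\varepsilon_4,\varepsilon_1,\varepsilon_2)$-span. Since $p$-norms are permutation invariant (Section \ref{subsec:pnorms}), the norm $\|(\varepsilon_1,\varepsilon_2,\varepsilon_3,\varepsilon_4)\|_p$ is unchanged under this reordering. Therefore the sets of $\varepsilon$ for which $X,Y$ are $\varepsilon$-close in $p$-norm and for which $Y,X$ are $\varepsilon$-close in $p$-norm coincide, and their infima agree.

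For the vanishing condition, suppose $X \cong Y$ via an isomorphism $\phi: X\to Y$. Consider the span $X \xleftarrow{\id_X} X \xrightarrow{\phi} Y$. Both maps have trivial kernel and cokernel, and $0 \in \ns_{\varepsilon}$ for every $\varepsilon \in [0,\infty)$ by the first axiom of noise systems. Hence this is a $(0,0,0,0)$-span, so $X$ and $Y$ are $0$-close in $p$-norm, giving $d^p_{\ns}(X,Y)=0$.

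For the triangle inequality, let $X,Y,Z$ be persistence modules and fix $\delta>0$. By definition of the infimum, there exist $\varepsilon,\tau \in [0,\infty)$ with $\varepsilon \le d^p_{\ns}(X,Y) + \delta/2$ and $\tau \le d^p_{\ns}(Y,Z) + \delta/2$ such that $X,Y$ are $\varepsilon$-close and $Y,Z$ are $\tau$-close in $p$-norm. Proposition \ref{prop:additivity} then yields that $X,Z$ are $(\varepsilon+\tau)$-close in $p$-norm, so
\[
d^p_{\ns}(X,Z) \le \varepsilon + \tau \le d^p_{\ns}(X,Y) + d^p_{\ns}(Y,Z) + \delta.
\]
Letting $\delta \to 0$ gives the triangle inequality. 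The cases where $d^p_{\ns}(X,Y)$ or $d^p_{\ns}(Y,Z)$ equals $\infty$ are trivial under the convention $\inf \varnothing = \infty$. The main obstacle has already been handled upstream in Proposition \ref{prop:additivity}, which encapsulated the pullback construction together with the subadditivity of the $p$-norm on $\R^4$; the present statement is essentially a formal consequence.
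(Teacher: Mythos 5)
Your proposal is correct and follows essentially the same route as the paper: the identity-plus-isomorphism span for the vanishing condition, the span-reversal bijection for symmetry, and Proposition \ref{prop:additivity} (the pullback argument with subadditivity of the $p$-norm) for the triangle inequality. You simply spell out the permutation-invariance of the $p$-norm and the $\delta$-argument passing from closeness to the infimum, details the paper leaves implicit.
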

\begin{proof} 
If $g\colon X\to Y$ is an isomorphism of persistence modules, the span $X \xleftarrow{\id} X \xrightarrow{g} Y$ shows that $d^p_{\ns}(X,Y)=0$. For all persistence modules $X$ and $Y$, the bijection between spans $X \xleftarrow{f} Z \xrightarrow{g} Y$ between $X$ and $Y$ and spans $Y \xleftarrow{g} Z \xrightarrow{f} X$ between $Y$ and $X$ implies that $d^p_\ns (X,Y)=d^p_\ns (Y,X)$. Proposition \ref{prop:additivity} shows that the triangle inequality holds true.
\end{proof}

\begin{rmk}
\label{rmk:equiv_pq}
Given a noise system $\ns$, the pseudometrics $d^p_{\ns}$ for all $p\in [1,\infty]$ are strongly equivalent. Assuming $p \leq q$, for any pair of persistence modules $X,Y$ we have
\[
d^q_{\ns} (X,Y) \le d^p_{\ns} (X,Y) \le  4^{\left( \frac{1}{p} - \frac{1}{q}\right)} d^q_{\ns}(X,Y) ,
\]
as can be easily concluded from the properties on  $p$-norms on $\mathbb{R}^4$
stated in Section \ref{subsec:pnorms}.
\end{rmk}

\subsection{\texorpdfstring{$p$}{p}-norms of persistence modules and contours}
\label{subsec:pnorms_C}
The aim of this section is to introduce and study a generalization of the notion of $p$-norm of a persistence module (see Section \ref{subsec:pnorms}) first introduced in \cite{skraba2020wasserstein}, that coincides with the original definition if $C$ is the standard contour (see Section \ref{subsec:contours}).
 
\begin{defn} 
\label{def:pCnorm}
Let $C$ be a regular contour. For $p\in [1,\infty]$, define the \textbf{$(p,C)$-norm} of a persistence module $X\cong \bigoplus_{i=1}^k K(a_i, b_i)$ by
\begin{equation*}
\label{eq:def_pnorm_C}
\left\| X\right\|_{p,C} \=  \begin{cases}
\left( \sum_{i=1}^k \ell(a_i,b_i)^{p} \right)^{\frac{1}{p}} &\text{for $p\in [1,\infty)$},\\
\max \{ \ell(a_i,b_i) \}_{i=1}^k &\text{for $p=\infty$} ,
\end{cases}
\end{equation*}
where $\ell(a_i,b_i)$ denotes the lifetime of the bar $K(a_i,b_i)$ with respect to $C$ (see Section \ref{subsec:contours}).
\end{defn}
We see that $\left\| X\right\|_{p,C} $ does not depend on the choice of barcode decomposition for $X$. 
For $p\in [1,\infty]$ and $\varepsilon \in [0,\infty)$, consider the class of tame persistence modules
\begin{equation*}
\ns^{p,C}_{\varepsilon} \= \{ X \in \Tame \mid \left\| X \right\|_{p,C} \le \varepsilon \} ,
\end{equation*}
and denote $\ns^{p,C} \= \{ \ns^{p,C}_{\varepsilon} \}_{\varepsilon \in [0,\infty)}$.
If $D$ is the standard contour (see Section \ref{subsec:contours}), then $\ell (a_i,b_i) = b_i - a_i$ and we have $\left\| X \right\|_{p,D} =\left\| X \right\|_{p}$ and $\ns^{p,D}=\ns^{p}$.
The main result in this subsection is showing that $\ns^{p,C}$ is a noise system (see Section \ref{subsec:noise_systems_distances}) whenever $C$ is an action, for any $p\in [1,\infty]$. 
For the standard contour, this result together with Proposition \ref{prop:dp_pseudometric} provide an alternative proof to the one in \cite{skraba2020wasserstein} that the algebraic $p$-Wasserstein distance 
is a pseudometric, as will be later highlighted in Remark \ref{rmk:defs_dp}.

Given a contour $C$, the function $C(0,-)\colon  [0,\infty) \to [0,\infty)$ is nondecreasing (and it is an increasing bijection if the contour is regular). Hence it can be viewed as a functor from the poset category $[0,\infty )$ to itself, that can be effectively used to re-parameterize $[0,\infty )$. For any persistence module $X$, the composition of functors $T_C(X)\coloneqq XC(0,-)\colon [0,\infty)\to \Vect_K$ is a persistence module. As we will show, $T_C(X)$ is in $\Tame$ whenever $X$ is in $\Tame$ and $C$ is a regular contour (Corollary \ref{coro:barcode_reparam}). 
The assignment $X\mapsto T_C(X)$ can be extended to a functor $T_C \colon  \Tame \to \Tame$ sending a morphism $f\colon X \to Y$ of persistence modules to the morphism $T_C(f)\colon T_C(X) \to T_C(Y)$ defined as the natural transformation between $T_C(X)$ and $T_C(Y)$ whose component at $a\in [0,\infty )$ is $T_C(f)_a = f_{C(0,a)}\colon  X_{C(0,a)} \to Y_{C(0,a)}$.

We now explain the relationship between the barcode decompositions of $X$ and $T_C(X)$ when $C$ is a regular contour.

\begin{prop}
\label{prop:bar_reparam}
Let $C$ be a regular contour, and let $\ell$ be the associated lifetime function. Consider a bar $K(a,b)$. Then
\[
T_C(K(a,b)) \cong K(\ell(0,a),\ell(0,b)) .
\]
\end{prop}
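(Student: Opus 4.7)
The plan is to unwind the definitions and use the regularity of $C$ to translate the support condition defining $K(a,b)$ under the reparametrization $C(0,-)$ into the support condition defining $K(\ell(0,a),\ell(0,b))$.

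First, I will evaluate $T_C(K(a,b))$ pointwise. By definition, for every $t\in [0,\infty)$,
\[
T_C(K(a,b))_t = K(a,b)_{C(0,t)} = \begin{cases} K & \text{if } a \le C(0,t) < b, \\ 0 & \text{otherwise.} \end{cases}
\]
Since $C$ is a regular contour, the function $C(0,-):[0,\infty)\to [0,\infty)$ is a strictly increasing bijection (the second regularity condition with $a=0$ gives image $[0,\infty)$), and its inverse is exactly $\ell(0,-)$, extended by $\ell(0,\infty)=\infty$. Applying the strictly increasing function $\ell(0,-)$ to the inequalities $a \le C(0,t)$ and $C(0,t) < b$ and using $\ell(0,C(0,t))=t$, the condition $a \le C(0,t) < b$ becomes equivalent to $\ell(0,a) \le t < \ell(0,b)$. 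Hence
\[
T_C(K(a,b))_t = \begin{cases} K & \text{if } \ell(0,a) \le t < \ell(0,b), \\ 0 & \text{otherwise,} \end{cases}
\]
which agrees pointwise with $K(\ell(0,a),\ell(0,b))_t$.

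Next, I will verify the transition functions match. For $s \le t$ in $[0,\infty)$, the transition map of $T_C(K(a,b))$ from degree $s$ to degree $t$ is $K(a,b)_{C(0,s) \le C(0,t)}$, which is $\id_K$ when both $K(a,b)_{C(0,s)}$ and $K(a,b)_{C(0,t)}$ equal $K$, and is $0$ otherwise. By the equivalence established above, this is exactly the transition function $K(\ell(0,a),\ell(0,b))_{s \le t}$. Collecting the pointwise equalities into a natural transformation with components $\id_K$ (where both sides are $K$) and $0$ (otherwise) yields an isomorphism $T_C(K(a,b)) \cong K(\ell(0,a),\ell(0,b))$.

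The only mild subtlety is the edge case $b = \infty$: here the inequality $C(0,t) < b$ is vacuous, so $T_C(K(a,\infty))_t = K$ iff $\ell(0,a) \le t$, matching $K(\ell(0,a),\infty)$ by the convention $\ell(0,\infty) = \infty$. Apart from this bookkeeping, the argument is a direct application of the inverse relationship between $C(0,-)$ and $\ell(0,-)$ guaranteed by regularity, so I do not anticipate any real obstacle.
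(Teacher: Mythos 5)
Your proof is correct and follows essentially the same route as the paper: unwind the definition of $T_C(K(a,b))$ and use the fact that $\ell(0,-)$ is the (strictly increasing) inverse of $C(0,-)$, guaranteed by regularity, to translate the condition $a\le C(0,t)<b$ into $\ell(0,a)\le t<\ell(0,b)$. The paper does this in one step at the level of transition maps, whereas you separate objects and transition functions and note the $b=\infty$ case explicitly, but the underlying argument is identical.
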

\begin{proof}
The functor $T_C(K(a,b))\colon [0,\infty ) \to \Vect_K$ sends $c\le d$ in $[0,\infty )$ to the linear function 
\[
K(a,b)_{C(0,c)\le C(0,d)} \colon   K(a,b)_{C(0,c)} \to K(a,b)_{C(0,d)},
\]
which is the identity on $K$ if $a\le C(0,c)\le C(0,d)<b$ and the zero function otherwise. Since $C$ is regular, $\ell (0,-)$ is a strictly increasing function, hence the condition $a\le C(0,c)\le C(0,d)<b$ is equivalent to $\ell(0,a)\le c\le d< \ell(0,b)$.
\end{proof}

\begin{coro}
\label{coro:barcode_reparam}
Let $X$ be a tame persistence module with barcode decomposition $\bigoplus_{i=1}^k K(a_i,b_i)$, and let $C$ be a regular contour. Then $T_C(X) \cong \bigoplus_{i=1}^k K(\ell(0,a_i),\ell(0,b_i))$. 
In particular $T_C(X)$ is also in $\Tame$.

\end{coro}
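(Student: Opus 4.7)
The plan is to reduce the statement directly to the single-bar case handled by Proposition \ref{prop:bar_reparam}, using the fact that $T_C$ is a functor that commutes with finite direct sums (as observed in the paragraph preceding Proposition \ref{prop:shortexact_XC}).

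First I would apply the functor $T_C$ to the barcode decomposition isomorphism $X \cong \bigoplus_{i=1}^k K(a_i,b_i)$. Since functors preserve isomorphisms, this gives $T_C(X) \cong T_C\bigl(\bigoplus_{i=1}^k K(a_i,b_i)\bigr)$. Next, invoking the pointwise definition of direct sums in $\Tame$ together with the pointwise definition of $T_C$, we obtain the natural isomorphism $T_C\bigl(\bigoplus_{i=1}^k K(a_i,b_i)\bigr) \cong \bigoplus_{i=1}^k T_C(K(a_i,b_i))$ (as already noted in the text). Finally, applying Proposition \ref{prop:bar_reparam} to each summand yields $T_C(K(a_i,b_i)) \cong K(\ell(0,a_i),\ell(0,b_i))$, and concatenating these isomorphisms produces the desired decomposition of $T_C(X)$.

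There is essentially no obstacle: the only thing worth flagging is that this corollary simultaneously shows that $T_C(X)$ lies in $\Tame$ whenever $X$ does (as anticipated in the discussion before Proposition \ref{prop:shortexact_XC}), since the right-hand side is a finite direct sum of bars, which is tame by Theorem \ref{thm:barcode_decomp}. One could also remark that the decomposition on the right is genuinely a barcode decomposition, since regularity of $C$ guarantees that $\ell(0,-)$ is strictly increasing, so $\ell(0,a_i) < \ell(0,b_i)$ for each $i$.
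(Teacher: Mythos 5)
Your proposal is correct and follows essentially the same route as the paper: apply $T_C$ to the barcode decomposition, commute $T_C$ with the finite direct sum, and invoke Proposition \ref{prop:bar_reparam} on each bar. Your additional remarks on tameness and on $\ell(0,a_i)<\ell(0,b_i)$ are accurate but not needed beyond what the paper records.
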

\begin{proof}

Since direct sums in $\Tame$ are defined pointwise (Section \ref{subsec:persmod}), if $\{X_i\}_{i\in I}$ is a finite collection of persistence modules and $C$ is a regular contour, then $T_C(\bigoplus_{i\in I} X_i) \cong \bigoplus_{i\in I} T_C(X_i)$, this together with Proposition \ref{prop:bar_reparam}, gives the following.

\begin{align*} 
    T_C(X) &\cong T_C(\bigoplus_{i=1}^k K(a_i,b_i)) \\
    &\cong \bigoplus_{i=1}^k T_C (K(a_i,b_i)) \\
    &\cong \bigoplus_{i=1}^k K(\ell(0,a_i),\ell(0,b_i)). \tag{by Proposition \ref{prop:bar_reparam}}
\end{align*}

Given that bars  $K(\ell(0,a_i),\ell(0,b_i))$ are in $\Tame$ and tameness is preserved by finite direct sums, $T_C(X)$ is also in $\Tame$.
\end{proof}

We now show that the functor $T_C$ is exact.

\begin{prop}
\label{prop:shortexact_XC}
Let $0\to X\to Y\to Z\to 0$ be an exact sequence in $\Tame$, and let $C$ be a regular contour. Then the sequence $0\to T_C(X)\to T_C(Y)\to T_C(Z)\to 0$ is also exact in $\Tame$.
\end{prop}
\begin{proof}
Exactness in $\Tame$ is defined pointwise: $0\to X\to Y\to Z\to 0$ is exact if and only if $0\to X_a\to Y_a\to Z_a\to 0$ is exact in $\Vect_K$, for every $a\in [0,\infty )$. As a consequence, $0\to X_{C(0,b)}\to Y_{C(0,b)}\to Z_{C(0,b)}\to 0$ is exact in $\Vect_K$, for every $b\in [0,\infty )$, hence by definition the sequence $0\to T_C(X)\to T_C(Y)\to T_C(Z)\to 0$ is exact. The fact that the exact sequence $0\to T_C(X)\to T_C(Y)\to T_C(Z)\to 0$ is in $\Tame$ follows from Corollary \ref{coro:barcode_reparam} and how $T_C$ is defined on morphisms.
\end{proof}

\begin{rmk}
\label{rmk:TC_exact}
As is clear from its proof, Proposition \ref{prop:shortexact_XC} in fact holds for the precomposition of persistence modules by any increasing bijection of $[0,\infty)$. 
\end{rmk}

In the rest of the article, we will focus on contours that are regular and actions (see Section \ref{subsec:contours}), called regular actions for brevity. We prove here a simple but important property of regular actions, and the associated lifetime function $\ell$, which is used to prove the subsequent results.  
\begin{lem}
\label{lem:add_lifespans}
If $C$ is a regular action, then $\ell (a,c)= \ell (a,b)+\ell (b,c)$ for any $a\le b\le c$ in $[0,\infty )$.
\end{lem}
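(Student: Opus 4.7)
The plan is to use the defining inverse relationship between $\ell(a,-)$ and $C(a,-)$, together with the action property of $C$, in a single direct computation. Since $C$ is regular, for every $a\in [0,\infty)$ the function $C(a,-):[0,\infty)\to [a,\infty)$ is a bijection with inverse $\ell(a,-)$, so $C(a,\ell(a,b)) = b$ and $\ell(a, C(a,\varepsilon)) = \varepsilon$. The action property additionally gives $C(C(a,\varepsilon),\tau) = C(a,\varepsilon+\tau)$ for all $a,\varepsilon,\tau$.

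The first step is to dispatch the degenerate case $c=\infty$: by the convention $\ell(a,\infty)=\infty$, both sides of the equality are $\infty$ (using that $\ell(a,b)\in [0,\infty)$ and $\ell(b,\infty)=\infty$). So from here on I would assume $a\le b\le c$ all lie in $[0,\infty)$.

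The main step is a short computation. Set $\varepsilon := \ell(a,b)$ and $\tau := \ell(b,c)$, which are well-defined nonnegative real numbers because $C$ is regular and $a\le b\le c<\infty$. Then by construction
\[
C(a,\varepsilon) = b \quad \text{and} \quad C(b,\tau) = c.
\]
Applying the action property of $C$ yields
\[
C(a,\varepsilon+\tau) \;=\; C(C(a,\varepsilon),\tau) \;=\; C(b,\tau) \;=\; c.
\]
Applying the inverse $\ell(a,-)$ to both sides of $C(a,\varepsilon+\tau)=c$ gives $\varepsilon+\tau = \ell(a,c)$, which is exactly $\ell(a,b)+\ell(b,c) = \ell(a,c)$.

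I do not expect a genuine obstacle here: the identity is essentially a one-line consequence of ``$\ell$ is the inverse of $C$ in the second slot'' combined with the equality case of axiom (3) of contours that holds precisely because $C$ is an action. The only subtlety worth writing out carefully is the bookkeeping around $c=\infty$ (handled by the convention) and the fact that $\ell(a,b)$ and $\ell(b,c)$ are well-defined and finite when $a\le b\le c<\infty$, which follows from regularity.
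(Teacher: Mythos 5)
Your proof is correct and follows essentially the same route as the paper: use the action property to get $C(C(a,\ell(a,b)),\ell(b,c)) = C(a,\ell(a,b)+\ell(b,c))$, identify the left-hand side with $c$, and invert $C(a,-)$ to conclude. The only difference is your extra case $c=\infty$, which is not needed since the statement restricts $a\le b\le c$ to $[0,\infty)$, but it does no harm.
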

\begin{proof}
Let $a\le b\le c$. Using the definitions and the assumption that $C$ is an action, we have $C(C(a,\ell (a,b)), \ell (b,c))= C(a,\ell (a,b) + \ell (b,c))$. Again by definition, we observe that the left-hand side equals $c$, and that $c = C(a,\ell (a,b) + \ell (b,c))$ implies $\ell (a,c) =\ell (a,b)+\ell (b,c)$.
\end{proof}

\begin{prop}
\label{prop:XCp}
Let $X$ be a persistence module, let $p\in [1,\infty]$, and let $C$ be a regular action. Then $\left\| X \right\|_{p,C} =\left\| T_C(X) \right\|_{p}$.
\end{prop}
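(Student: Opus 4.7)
The plan is to reduce the claim to a termwise computation on each bar of $X$, using the two facts already set up right before the statement: Corollary~\ref{coro:barcode_reparam}, which describes the barcode decomposition of $T_C(X)$, and Lemma~\ref{lem:add_lifespans}, which gives additivity of the lifetime function along a chain $0 \le a \le b$. The identity I want is
\[
\ell(a_i,b_i) \;=\; \ell(0,b_i)-\ell(0,a_i)
\]
for every bar $K(a_i,b_i)$ appearing in a barcode decomposition of $X$; once this is in hand the equality $\|X\|_{p,C}=\|T_C(X)\|_p$ is immediate from comparing the two defining summations (or maxima).

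First I would fix a barcode decomposition $X\cong \bigoplus_{i=1}^k K(a_i,b_i)$. By Corollary~\ref{coro:barcode_reparam} we have $T_C(X)\cong \bigoplus_{i=1}^k K(\ell(0,a_i),\ell(0,b_i))$, so, by the definition of the usual $p$-norm of a persistence module in Section~\ref{subsec:pnorms},
\[
\left\|T_C(X)\right\|_p \;=\; \Bigl(\sum_{i=1}^k \bigl|\ell(0,b_i)-\ell(0,a_i)\bigr|^p\Bigr)^{1/p}
\]
for $p\in[1,\infty)$, and analogously with a maximum for $p=\infty$. For the right-hand side to match $\|X\|_{p,C}=\bigl(\sum_i \ell(a_i,b_i)^p\bigr)^{1/p}$, I need the identity above.

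Next I would derive the identity itself. For a finite bar ($b_i<\infty$) the triple $0\le a_i\le b_i$ lies in $[0,\infty)$, so Lemma~\ref{lem:add_lifespans} applies and gives $\ell(0,b_i)=\ell(0,a_i)+\ell(a_i,b_i)$; since $C$ is an action and regular, $\ell(0,a_i)$ is finite, so we may rearrange to obtain $\ell(a_i,b_i)=\ell(0,b_i)-\ell(0,a_i)\ge 0$. For an infinite bar ($b_i=\infty$) the convention $\ell(a_i,\infty)=\infty$ (Section~\ref{subsec:contours}) makes the $i$-th summand infinite on the $\|X\|_{p,C}$ side; on the $\|T_C(X)\|_p$ side the corresponding bar is $K(\ell(0,a_i),\infty)$, whose length is also infinite, so the two sides agree (both being $\infty$) regardless of $p$.

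Finally I would assemble the pieces: substituting $\ell(a_i,b_i)=\ell(0,b_i)-\ell(0,a_i)$ termwise in the definition of $\|X\|_{p,C}$ reproduces the formula for $\|T_C(X)\|_p$ obtained above, for every $p\in[1,\infty)$, and the same termwise identification handles $p=\infty$ directly. The main (minor) obstacle is just the careful handling of the infinite-bar case, where Lemma~\ref{lem:add_lifespans} does not literally apply; this is resolved by invoking the convention $\ell(a,\infty)=\infty$ so that both sides are $\infty$.
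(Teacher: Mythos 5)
Your proposal is correct and follows essentially the same route as the paper: apply Corollary~\ref{coro:barcode_reparam} to read off the barcode of $T_C(X)$, use Lemma~\ref{lem:add_lifespans} with the chain $0\le a_i\le b_i$ to identify $\ell(0,b_i)-\ell(0,a_i)$ with $\ell(a_i,b_i)$, and compare the defining expressions termwise for both $p<\infty$ and $p=\infty$. Your explicit treatment of the infinite-bar case via the convention $\ell(a,\infty)=\infty$ is a small point the paper leaves implicit, but it does not change the argument.
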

\begin{proof}
Let $X\cong \bigoplus_{i=1}^k K(a_i,b_i)$. For any fixed $p\in [1,\infty)$, we have
\begin{align*}
    \left\| T_C (X) \right\|_{p} &= \left( \sum_{i=1}^k ( \ell (0,b_i)-\ell (0,a_i))^{p} \right)^{\frac{1}{p}} \\
    &= \left( \sum_{i=1}^k \ell (a_i,b_i)^{p} \right)^{\frac{1}{p}} \\
    &= \left\| X \right\|_{p,C} ,
\end{align*}
where the first equality is by Corollary \ref{coro:barcode_reparam}, the second one is by Lemma \ref{lem:add_lifespans}, and the third one is by definition of $\left\| \cdot \right\|_{p,C}$. The case $p=\infty$ is similar. 
\end{proof}

We are now ready to prove that $\ns^{p,C}$, with $C$ a regular  action, satisfies the axioms in the definition of noise system (see Section \ref{subsec:noise_systems_distances}). 

\begin{lem}
\label{lem:ns_ses1}
Let $0\to X\to Y\to Z\to 0$ be an exact sequence in $\Tame$, and let $C$ be a regular contour. Then $\left\| X \right\|_{p,C} \le \left\| Y \right\|_{p,C}$ and $\left\| Z \right\|_{p,C} \le \left\| Y \right\|_{p,C}$.
\end{lem}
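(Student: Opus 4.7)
The plan is to reduce to combinatorial comparisons of bar lifetimes by replacing the two inclusions/projections from the short exact sequence with bar-to-bar morphisms. More precisely, from the exact sequence $0 \to X \to Y \to Z \to 0$ we extract a monomorphism $X \hookrightarrow Y$ and an epimorphism $Y \twoheadrightarrow Z$, and apply Theorem \ref{thm:mono-bar} (respectively Theorem \ref{thm:epi-bar}) to obtain a bar-to-bar monomorphism $f_b: X \hookrightarrow Y$ (respectively a bar-to-bar epimorphism $g_b: Y \twoheadrightarrow Z$) between the same persistence modules. Only the existence of such bar-to-bar morphisms is used; the $\|\cdot\|_p$-inequalities provided by those theorems are not needed here.

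Fix barcode decompositions $X \cong \bigoplus_{i=1}^m K(a_i,b_i)$ and $Y \cong \bigoplus_{j=1}^n K(c_j,d_j)$ with respect to which $f_b$ is bar-to-bar, and let $\alpha:\{1,\ldots,m\} \to \{1,\ldots,n\}$ be the associated injective index function (injective and total because $f_b$ is a monomorphism, by Remark \ref{rmk:ker_coker_bartobar}). Each component $K(a_i,b_i) \to K(c_{\alpha(i)},d_{\alpha(i)})$ is a nonzero morphism of bars with zero kernel; from the classification of bar morphisms recalled in Section \ref{subsec:persmod}, this forces $c_{\alpha(i)} \le a_i$ and $d_{\alpha(i)} = b_i$.

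Next I need that the lifetime function $\ell$ associated with the regular contour $C$ is non-increasing in its first argument. This follows directly from the monotonicity axiom of contours: if $c \le a$ then $C(c,\ell(a,b)) \le C(a,\ell(a,b)) = b$, and since $C(c,-)$ is a strictly increasing bijection onto $[c,\infty)$ by regularity, we conclude $\ell(c,b) \ge \ell(a,b)$. Applying this with $c = c_{\alpha(i)}$ and using $d_{\alpha(i)} = b_i$, we get $\ell(a_i,b_i) \le \ell(c_{\alpha(i)},d_{\alpha(i)})$. For $p \in [1,\infty)$, the injectivity of $\alpha$ yields
\[
\|X\|_{p,C}^{p} = \sum_{i=1}^{m} \ell(a_i,b_i)^{p} \le \sum_{i=1}^{m} \ell(c_{\alpha(i)},d_{\alpha(i)})^{p} \le \sum_{j=1}^{n} \ell(c_j,d_j)^{p} = \|Y\|_{p,C}^{p},
\]
and the case $p = \infty$ is handled identically with $\max$ in place of $\sum$.

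The second inequality $\|Z\|_{p,C} \le \|Y\|_{p,C}$ is obtained by the dual argument applied to the bar-to-bar epimorphism $g_b: Y \twoheadrightarrow Z$ provided by Theorem \ref{thm:epi-bar}. The roles swap: every bar $K(e_j,f_j)$ of $Z$ receives a nonzero map with zero cokernel from a unique bar $K(c_{\beta(j)},d_{\beta(j)})$ of $Y$, via an injective index function $\beta$. The classification of bar morphisms now forces $c_{\beta(j)} = e_j$ and $d_{\beta(j)} \ge f_j$, and monotonicity of $\ell$ in the second variable (immediate from the regular contour axioms) gives $\ell(e_j,f_j) \le \ell(c_{\beta(j)},d_{\beta(j)})$, whence summation over $j$ and injectivity of $\beta$ yield the desired inequality. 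I do not expect any substantive obstacle: the only delicate point is verifying the monotonicity of $\ell$ in each argument from the contour axioms, after which the bar-to-bar structure theorems reduce the statement to a term-by-term comparison of lifetimes.
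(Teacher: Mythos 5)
Your proof is correct and follows essentially the same route as the paper: both invoke Theorem~\ref{thm:mono-bar} (resp.\ Theorem~\ref{thm:epi-bar}) only for the existence of a bar-to-bar monomorphism $X\hookrightarrow Y$ (resp.\ epimorphism $Y\twoheadrightarrow Z$), then use the classification of morphisms between bars together with monotonicity of the lifetime function to bound each term of $\|X\|_{p,C}$ (resp.\ $\|Z\|_{p,C}$) by a distinct term of $\|Y\|_{p,C}$. Your explicit verification that $\ell$ is non-increasing in the first argument is a welcome elaboration of a step the paper only asserts, but it is not a different argument.
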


For the standard contour, our statement coincides with Lemma 7.8 in \cite{skraba2020wasserstein}, which is easily proven using the induced matchings \cite{bauer2015induced} for monomorphisms and epimorphisms of persistence modules. 
For the sake of completeness, we include the proof for $\| \cdot \|_{p,C}$, which does not present any additional difficulty.

\begin{proof}
The existence of a monomorphism from $X$ to $Y$ implies the existence of the bar-to-bar monomorphism $f_{\varphi}\colon X\hookrightarrow Y$ of Definition \ref{def:inducedmono}, induced by the canonical injection \cite{bauer2015induced}. The monomorphism $f_{\varphi}$ decomposes as a finite direct sum of monomorphisms of the form $K(a',b)\hookrightarrow K(a,b)$, with $a\le a'$, and of the form $0\hookrightarrow K(a,b)$. By monotonicity of contours, $a\le a'$ implies $\ell (a',b) \le \ell (a,b)$. The inequality $\left\| X \right\|_{p,C} \le \left\| Y \right\|_{p,C}$ then follows from the definition of $\left\| \cdot \right\|_{p,C}$, since every term in the expression for $\left\| X \right\|_{p,C}$ is upper bounded by a term in the expression for $\left\| Y \right\|_{p,C}$.

The proof of the inequality $\left\| Z \right\|_{p,C} \le \left\| Y \right\|_{p,C}$ is obtained similarly, using the epimorphism induced by the canonical injection (see Definition \ref{def:inducedepi}).
\end{proof}

\begin{lem}
\label{lem:ns_ses2}
Let $0\to X\to Y\to Z\to 0$ be an exact sequence in $\Tame$, and let $C$ be a regular action. Then $\left\| Y \right\|_{p,C} \le \left\| X \right\|_{p,C} + \left\| Z \right\|_{p,C}$.
\end{lem}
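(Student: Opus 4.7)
The plan is to reduce to the case of the standard contour via the functor $T_C$, and then use our bar-to-bar technology to convert the problem into a concrete vector-level application of the Minkowski inequality.

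First, I would apply the functor $T_C$ to the given short exact sequence. Since $C$ is a regular contour, Proposition \ref{prop:shortexact_XC} yields an exact sequence $0\to T_C(X)\to T_C(Y)\to T_C(Z)\to 0$ in $\Tame$. Because $C$ is also an action, Proposition \ref{prop:XCp} gives $\left\|W\right\|_{p,C}=\left\|T_C(W)\right\|_p$ for every tame persistence module $W$. So it is enough to prove the inequality in the case of the standard contour, where the lifetime $\ell(a,b)$ is just $b-a$; that is, it suffices to show that for any short exact sequence $0\to X'\to Y'\to Z'\to 0$ in $\Tame$, one has $\left\|Y'\right\|_p\le \left\|X'\right\|_p+\left\|Z'\right\|_p$.

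For this reduced statement, let $f:X'\hookrightarrow Y'$ be the monomorphism in the sequence, so that $\coker f\cong Z'$. Applying Theorem \ref{thm:mono-bar}, I obtain a bar-to-bar monomorphism $f_b:X'\hookrightarrow Y'$ with $\left\|\coker f_b\right\|_p\le\left\|\coker f\right\|_p=\left\|Z'\right\|_p$. Invoking Remark \ref{rmk:ker_coker_bartobar}, there exist barcode decompositions $X'\cong\bigoplus_{i=1}^m K(a_i,b_i)$ and $Y'\cong\bigoplus_{j=1}^n K(c_j,d_j)$ and an injection $\alpha:\{1,\dots,m\}\to\{1,\dots,n\}$ such that $f_b$ restricts on each summand to a nonzero monomorphism $K(a_i,b_i)\hookrightarrow K(c_{\alpha(i)},d_{\alpha(i)})$. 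Such a monomorphism between bars forces $d_{\alpha(i)}=b_i$ and $c_{\alpha(i)}\le a_i$, so $d_{\alpha(i)}-c_{\alpha(i)}=(b_i-a_i)+(a_i-c_{\alpha(i)})$, and $\coker f_b$ is the direct sum of the bars $K(c_{\alpha(i)},a_i)$ together with the bars $K(c_j,d_j)$ for $j\notin\alpha(\{1,\dots,m\})$.

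Now I would set up two vectors in $\R^n$: let $u$ have entries $(b_i-a_i)$ in positions $\alpha(i)$ and $0$ in the remaining positions, and let $v$ have entries $(a_i-c_{\alpha(i)})$ in positions $\alpha(i)$ and $(d_j-c_j)$ in positions $j\notin\alpha(\{1,\dots,m\})$. By construction, $\left\|u\right\|_p=\left\|X'\right\|_p$, $\left\|v\right\|_p=\left\|\coker f_b\right\|_p$, and the vector $u+v$ has entries $d_j-c_j$ at every position $j$, so $\left\|u+v\right\|_p=\left\|Y'\right\|_p$. The Minkowski inequality then gives $\left\|Y'\right\|_p\le\left\|u\right\|_p+\left\|v\right\|_p\le\left\|X'\right\|_p+\left\|Z'\right\|_p$, which is the desired inequality.

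The conceptual core of the argument is Theorem \ref{thm:mono-bar}, so the main obstacle has already been overcome in Section \ref{sec:mono_pnorm}; the remaining work is just careful bookkeeping. Minor points to handle include infinite bars (which are accommodated by the convention that a coordinate equal to $\infty$ yields $p$-norm $\infty$, and by checking that if $Y'$ has an infinite bar then either $X'$ or $\coker f_b$ does too, via the bar-to-bar structure), and the case $p=\infty$, which follows either directly from the same argument with $\max$ replacing the sum, or by taking the limit $p\to\infty$ as in the proof of Theorem \ref{thm:mono-bar}.
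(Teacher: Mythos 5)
Your proposal is correct and follows essentially the same route as the paper's proof: reduce to the standard contour via $T_C$ (Propositions \ref{prop:shortexact_XC} and \ref{prop:XCp}), replace the monomorphism by a bar-to-bar one using Theorem \ref{thm:mono-bar} and Remark \ref{rmk:ker_coker_bartobar}, and conclude by the Minkowski inequality applied to the same splitting of bar lengths of $Y$ that the paper writes as $(b_i-a_i')+(a_i'-a_i)$, which you package as the vectors $u$ and $v$. The only difference is cosmetic ordering (you do the $T_C$ reduction first, the paper does it last) and your slightly more explicit treatment of infinite bars and $p=\infty$.
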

\begin{proof}
First, we prove the statement assuming that $C$ is the standard contour. Let $0\to X\xrightarrow{f} Y\xrightarrow{g} Z\to 0$ be a short exact sequence of persistence modules, and let us show that $\left\| Y \right\|_{p} \le \left\| X \right\|_{p} + \left\| Z \right\|_{p}$. 
We consider the monomorphism $f$ and observe that $Z \cong \coker f$ implies that $Z$ and $\coker f$ have the same barcode decomposition, hence $\left\| Z \right\|_{p} = \left\| \coker f \right\|_{p}$. Theorem \ref{thm:mono-bar} tells us that, among all monomorphisms between two fixed persistence modules, the norm $\left\| \cdot \right\|_{p}$ of the cokernel is minimized by a bar-to-bar monomorphism. We therefore just need to prove that $\left\| Y \right\|_{p} \le  \left\| X \right\|_{p} +  \left\| \coker f \right\|_{p}$, for any bar-to-bar monomorphism $f$ between $X$ and $Y$. 

By Remark \ref{rmk:ker_coker_bartobar}, if $f\colon X\to Y$ is a bar-to-bar monomorphism, then there exist barcode decompositions $\bigoplus_{i=1}^m X_i$ and $\bigoplus_{j=1}^n Y_j$ of $X$ and $Y$, respectively, such that $m\le n$ and, up to permutation of the $Y_j$, there are monomorphisms $f_i \colon X_i \to Y_i$ between bars such that $\coker f = \bigoplus_{i=1}^m \coker f_i \oplus \bigoplus_{j=m+1}^n Y_j$. We observe that, for each bar $Y_i=K(a_i,b_i)$ of $Y$ with $i\in \{1,\dots ,m\}$, there is a bar $X_i=K(a'_i,b_i)$ of $X$ and a corresponding summand $\coker f_i$ of $\coker f$, which is a bar $K(a_i,a'_i)$ if $a_i<a'_i$, and it is the zero module if $a_i=a'_i$. Similarly, we observe that each bar $Y_j=K(a_j,b_j)$ of $Y$ with $j\in \{m+1,\ldots ,n\}$ is also a bar of $\coker f$. 
By definition, $\left\| Y \right\|_{p}$ is the $p$-norm of the following element of $\mathbb{R}^n$:
\[
(b_j-a_j)_{j\in \{1,\ldots ,n\}} = (((b_i-a'_i)+(a'_i-a_i))_{i\in\{1,\ldots ,m\}}, (b_{j}-a_{j})_{j\in\{m+1,\ldots ,n\}}).
\]
Then, by the triangular inequality of $p$-norms in $\mathbb{R}^n$, we have $\left\| Y \right\|_{p} \le  \left\| X \right\|_{p} +  \left\| \coker f \right\|_{p}$, which completes the proof when $C$ is the standard contour.

Let now $C$ be any regular action. By Proposition \ref{prop:shortexact_XC}, exactness of $0\to X\to Y\to Z\to 0$ implies exactness of $0\to T_C(X)\to T_C(Y)\to T_C(Z)\to 0$. Applying the previous part of the proof to the latter exact sequence yields $\left\| T_C(Y) \right\|_{p} \le \left\| T_C(X) \right\|_{p} + \left\| T_C(Z) \right\|_{p}$, which by Proposition \ref{prop:XCp} coincides with our claim.
\end{proof}

For the standard contour, the statement of Lemma \ref{lem:ns_ses2} is given in Remark 7.32 of \cite{skraba2020wasserstein}. However, to our knowledge, we provide the first proof of this inequality that does not assume the fact that the $p$-norm of persistence modules induces a pseudometric. Indeed in  \cite{skraba2020wasserstein} the fact that the algebraic Wasserstein distance satisfies the triangular inequality is used as an hypothesis.

We can now prove the main result of this subsection.

\begin{thm}
\label{thm:SpC_noisesystem}
For any $p\in [1,\infty]$ and any regular action $C$, $\ns^{p,C}$ is a noise system.
\end{thm}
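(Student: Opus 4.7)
The plan is to unpack the definition of a noise system and verify each axiom directly, with nearly all the real work already packaged into Lemmas \ref{lem:ns_ses1} and \ref{lem:ns_ses2}. Recall that $\ns^{p,C}_\varepsilon = \{X \in \Tame \mid \|X\|_{p,C}\le \varepsilon\}$, so membership in $\ns^{p,C}_\varepsilon$ is controlled by a single real-valued quantity. This reduces the three noise system axioms to inequalities on $\|\cdot\|_{p,C}$.

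First, I would dispatch the easy axioms. The zero module has empty barcode, so $\|0\|_{p,C}=0\le \varepsilon$ for every $\varepsilon\in[0,\infty)$, giving $0\in\ns^{p,C}_\varepsilon$. Monotonicity $\ns^{p,C}_\tau \subseteq \ns^{p,C}_\varepsilon$ for $\tau\le\varepsilon$ is immediate from the definition, since $\|X\|_{p,C}\le\tau\le\varepsilon$ forces $\|X\|_{p,C}\le\varepsilon$.

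Next, I would verify the short exact sequence axioms. Let $0\to X_0 \to X_1 \to X_2 \to 0$ be exact in $\Tame$. If $X_1 \in \ns^{p,C}_\varepsilon$, then by Lemma \ref{lem:ns_ses1} (applied to the monomorphism $X_0\hookrightarrow X_1$ and the epimorphism $X_1\twoheadrightarrow X_2$) we have both $\|X_0\|_{p,C}\le \|X_1\|_{p,C}\le \varepsilon$ and $\|X_2\|_{p,C}\le \|X_1\|_{p,C}\le \varepsilon$, so $X_0, X_2 \in \ns^{p,C}_\varepsilon$. For the additivity condition, suppose $X_0 \in \ns^{p,C}_\varepsilon$ and $X_2 \in \ns^{p,C}_\tau$; then Lemma \ref{lem:ns_ses2} yields
\[
\|X_1\|_{p,C} \le \|X_0\|_{p,C} + \|X_2\|_{p,C} \le \varepsilon + \tau ,
\]
i.e.\ $X_1 \in \ns^{p,C}_{\varepsilon+\tau}$, as required.

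There is no real obstacle at this point, because the structural work already happened in Section \ref{sec:mono_pnorm} and in Lemmas \ref{lem:ns_ses1}--\ref{lem:ns_ses2}: the comparison of monomorphisms/epimorphisms with bar-to-bar ones handles the first inequality, and the Minkowski-type estimate after reparametrizing by $T_C$ handles the subadditivity. The one subtlety worth flagging is that the hypothesis that $C$ is an action (not merely a regular contour) is essential only for Lemma \ref{lem:ns_ses2}, via Lemma \ref{lem:add_lifespans} and Proposition \ref{prop:XCp}; without it the triangle inequality for $\|\cdot\|_{p,C}$ on short exact sequences need not hold. Once that is noted, the verification of the three axioms concludes the proof.
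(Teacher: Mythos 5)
Your proof is correct and follows essentially the same route as the paper: the zero-module and monotonicity axioms are immediate from the definition of $\ns^{p,C}_\varepsilon$, and the two short exact sequence axioms are exactly Lemma \ref{lem:ns_ses1} and Lemma \ref{lem:ns_ses2}. Your remark that the action hypothesis enters only through Lemma \ref{lem:ns_ses2} matches the paper's own discussion in Remark \ref{rmk:hp_contours}.
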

\begin{proof} We show that $\ns^{p,C}$ satisfies all axioms of the definition of noise system (see Section \ref{subsec:noise_systems_distances}). Since the norm $\left\| \cdot \right\|_{p,C}$ of the zero module $0$ is zero, we have $0 \in \ns_{\varepsilon}^{p,C}$, for all $\varepsilon \in [0,\infty )$. 
By definition of $\ns^{p,C}$, it is clear that $\ns_{\tau}^{p,C} \subseteq \ns_{\varepsilon}^{p,C}$ whenever $\tau \le \varepsilon$.
Lemma \ref{lem:ns_ses1} and Lemma \ref{lem:ns_ses2} complete the proof, showing that $\ns^{p,C}$ satisfies both conditions on short exact sequences of persistence modules.
\end{proof}

\begin{rmk}
\label{rmk:closed_directsum}
For $p < \infty$, the noise system $\ns^{p,C}$ in not closed under direct sums (Section \ref{subsec:noise_systems_distances}), since $\left\| X\oplus Y \right\|_{p,C} = \left\| \left( \| X\|_{p,C}, \| Y\|_{p,C} \right) \right\|_{p}$ by equation (\ref{eq:concatenate_p}). 
\end{rmk}

\begin{rmk}
\label{rmk:hp_contours}
Let us briefly highlight the role of our hypotheses on contours, which are required to be regular actions in Theorem \ref{thm:SpC_noisesystem}. 
The regularity assumption ensures for instance that the associated lifetime function $\ell$ is well-defined, and that the functor $T_C$ is an endofunctor on $\Tame$. The assumption that $C(0,-)\colon [0,\infty )\to [0,\infty )$ is an increasing bijection is sufficient to prove many results of this subsection (see Remark \ref{rmk:TC_exact}), but we choose to assume the stronger condition of regularity on $C$ to facilitate a comparison with the results of \cite{chacholski2020metrics}, observing in addition that many examples of regular contours can be found, for example the contours of distance type (Section \ref{subsec:contours}) that are used in our experiments (see Section \ref{sec:stable_rank}).
The hypothesis that the considered contours are actions is motivated by the use of Proposition \ref{prop:XCp} in the proof of Lemma \ref{lem:ns_ses2}. If $C$ is not an action, the equalities of Lemma \ref{lem:add_lifespans} and Proposition \ref{prop:XCp} are replaced by the inequalities $\ell (a,c) \le \ell (a,b)+\ell (b,c)$, for any $a\le b \le c$, and $\| T_C(X)\|_p \le \| X\|_{p,C}$. A proof of Lemma \ref{lem:ns_ses2} removing the action hypothesis on $C$ eludes us.
\end{rmk}

\subsection{Contours and algebraic Wasserstein distances}
\label{subsec:alg_p_pC}
We now turn to considering the pseudometrics $d^{q}_{\ns^{p,C}}$ associated (as in Section \ref{subsec:pseudom_dp}) with the noise systems $\ns^{p,C}$ introduced in Section \ref{subsec:pnorms_C}, for fixed $p,q\in [1,\infty ]$ and a regular action $C$. 
We also refer to these pseudometrics as \textbf{algebraic Wasserstein distances}.
First, we show that the functor $T_C$ introduced in Section \ref{subsec:pnorms_C} allows us to switch between a pseudometric $d^{q}_{\ns^{p,C}}$ and the pseudometric $d^{q}_{\ns^{p}}$ associated with the standard contour. More precisely, we show that $T_C$ can be viewed as an isometry
\[
T_C \colon  (\Tame, d^{q}_{\ns^{p,C}}) \to (\Tame, d^{q}_{\ns^{p}}) .
\]

Let us recall that, if $C$ is a regular contour, the function $C(0,-):[0,\infty )\to [0,\infty )$ is an increasing bijection. Its inverse $\ell(0,-)\coloneqq  C(0,-)^{-1}$ is therefore an increasing bijection as well. 
Mimicking the definition of $T_C$ given in Section  \ref{subsec:pnorms_C}, we can define a functor $T_{\ell} \colon  \Tame \to \Tame$ given by precomposition by the increasing function $\ell (0,-)$. 
By Proposition \ref{prop:shortexact_XC}, the functor $T_C \colon  \Tame \to \Tame$ preserves kernels and cokernels, and $T_{\ell}$ has the same property by Remark \ref{rmk:TC_exact}. Furthermore,  since $C(0,-)$ and $\ell(0,-)$ are inverse to each other, the compositions $T_C T_\ell$ and $T_\ell T_C$ are the identity functor $1_{\Tame}$ on $\Tame$. 

To prove the following result, it is convenient to define the \textbf{$(p,q,C)$-cost} of a span $X \xleftarrow{f} Z \xrightarrow{g} Y$ of persistence modules as the element $c\in [0,\infty]$ defined by
\[
c \coloneqq  \left\| \left( \left\| \ker f \right\|_{p,C}, \left\| \coker f \right\|_{p,C} , \left\| \ker g \right\|_{p,C}, \left\| \coker g \right\|_{p,C} \right) \right\|_{q} .
\]

\begin{prop}
\label{prop:isometry_pq_pqC_alg}
Let $C$ be a regular action, and let $X,Y$ be persistence modules. Then
\[
d^{q}_{\ns^{p,C}} (X,Y) = d^{q}_{\ns^{p}} (T_C (X), T_C (Y)) .
\]
\end{prop}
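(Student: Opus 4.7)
The plan is to exhibit a bijection between spans of $X,Y$ and spans of $T_C(X),T_C(Y)$ that preserves the relevant cost, and then take infima. The functor $T_C$ and its inverse $T_\ell$ (both endofunctors on $\Tame$ since $C$ is regular, so $C(0,-)$ and $\ell(0,-)$ are mutually inverse increasing bijections of $[0,\infty)$) will provide this bijection.

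First I would verify that, given any span $X \xleftarrow{f} Z \xrightarrow{g} Y$, applying $T_C$ componentwise yields a span $T_C(X) \xleftarrow{T_C(f)} T_C(Z) \xrightarrow{T_C(g)} T_C(Y)$. By Proposition~\ref{prop:shortexact_XC}, $T_C$ preserves kernels and cokernels, so $\ker T_C(f) \cong T_C(\ker f)$ and $\coker T_C(f) \cong T_C(\coker f)$, and similarly for $g$. Then by Proposition~\ref{prop:XCp},
\[
\|\ker f\|_{p,C} = \|T_C(\ker f)\|_p = \|\ker T_C(f)\|_p,
\]
and the same holds for $\coker f$, $\ker g$, $\coker g$. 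Hence the $(p,q,C)$-cost of the original span equals the $(p,q,D)$-cost (with $D$ the standard contour) of its image under $T_C$. This yields the inequality $d^{q}_{\ns^{p}}(T_C(X),T_C(Y)) \le d^{q}_{\ns^{p,C}}(X,Y)$.

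For the reverse inequality, I would use $T_\ell$ in the same way. Given any span $T_C(X) \xleftarrow{f'} W \xrightarrow{g'} T_C(Y)$, applying $T_\ell$ produces a span $T_\ell T_C(X) \xleftarrow{T_\ell(f')} T_\ell(W) \xrightarrow{T_\ell(g')} T_\ell T_C(Y)$; since $T_\ell T_C = 1_{\Tame}$, this is a span of $X$ and $Y$. By Remark~\ref{rmk:TC_exact}, $T_\ell$ also preserves kernels and cokernels, and applying Proposition~\ref{prop:XCp} in the other direction (writing any module $W'$ as $W' = T_C(T_\ell(W'))$, so $\|W'\|_p = \|T_\ell(W')\|_{p,C}$) shows again that costs are preserved. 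Taking infima gives the opposite inequality, hence equality.

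No step is really an obstacle since the heavy lifting is done by earlier results: the main point is that $T_C,T_\ell$ are mutually inverse exact endofunctors of $\Tame$, so they set up a cost-preserving bijection on spans, and Proposition~\ref{prop:XCp} converts $(p,C)$-norms into $p$-norms under $T_C$. The only thing to be careful about is confirming that the cost-preservation identity $\|T_\ell(W')\|_{p,C}=\|W'\|_p$ used in the reverse direction follows cleanly from Proposition~\ref{prop:XCp} applied to $W'=T_C(T_\ell(W'))$.
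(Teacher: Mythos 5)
Your proposal is correct and follows essentially the same route as the paper: both directions are obtained by applying the mutually inverse exact functors $T_C$ and $T_\ell$ to spans, using Proposition~\ref{prop:shortexact_XC} (and Remark~\ref{rmk:TC_exact}) to preserve kernels and cokernels and Proposition~\ref{prop:XCp} to identify the costs, then taking infima. The identity $\left\| T_\ell(W') \right\|_{p,C} = \left\| W' \right\|_p$ that you flag is exactly the chain of equalities the paper uses, so there is no gap.
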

\begin{proof}
Let $D$ denote the standard contour, and let us recall that the $(p,D)$-norm of a persistence module coincides with its $p$-norm (Section \ref{subsec:pnorms_C}). We describe a correspondence between spans having the same cost, calculated with respect to $(p,q,C)$ and $(p,q,D)$ respectively. 

Let $X \xleftarrow{f} Z \xrightarrow{g} Y$ be a span  and let $c$ be its $(p,q,C)$-cost.
Applying the functor $T_C$, we obtain the span $T_C(X) \xleftarrow{T_C(f)} T_C(Z) \xrightarrow{T_C(g)} T_C(Y)$, whose $(p,q,D)$-cost is 
\begin{align*}
c' & = \left\| \left( \left\| \ker T_C(f) \right\|_{p}, \left\| \coker T_C(f) \right\|_{p} , \left\| \ker T_C(g) \right\|_{p}, \left\| \coker T_C(g) \right\|_{p} \right) \right\|_{q} \\
&=  \left\| \left( \left\| T_C(\ker f) \right\|_{p}, \left\| T_C(\coker f) \right\|_{p} , \left\| T_C(\ker g) \right\|_{p}, \left\| T_C(\coker g) \right\|_{p} \right) \right\|_{q} \\
&= c ,
\end{align*}
where the second equality holds because the functor $T_C$ preserves kernels and cokernels, and the last equality holds by Proposition \ref{prop:XCp}. 

To prove the other direction of the correspondence, we start from a span $T_C(X) \xleftarrow{\varphi} T_C(Z) \xrightarrow{\psi} T_C(Y)$ whose $(p,q,D)$-cost is
\[
k \coloneqq  \left\| \left( \left\| \ker \varphi \right\|_{p}, \left\| \coker \varphi \right\|_{p} , \left\| \ker \psi \right\|_{p}, \left\| \coker \psi \right\|_{p} \right) \right\|_{q} ,
\]
and we exhibit a span between $X$ and $Y$ whose  $(p,q,C)$-cost equals $k$. Applying the functor $T_\ell$, we obtain the span $X \xleftarrow{T_\ell (\varphi)} Z \xrightarrow{T_\ell (\psi)} Y$. To determine the $(p,q,C)$-cost of this span we observe that
\[
\left\| \ker T_\ell (\varphi) \right\|_{p,C} = \left\| T_\ell (\ker \varphi ) \right\|_{p,C} = \left\| T_C T_\ell (\ker \varphi ) \right\|_{p} = \left\| \ker \varphi \right\|_{p} ,
\]
where the first equality holds because $T_\ell$ preserves kernels, the second equality is by Proposition \ref{prop:XCp}, and the third equality holds because $T_C T_\ell = 1_{\Tame}$. Since similar equalities hold for $\coker T_\ell (\varphi)$, $\ker T_\ell (\psi)$, and $\coker T_\ell (\psi)$, the $(p,q,C)$-cost of the span $X \xleftarrow{T_\ell (\varphi)} Z \xrightarrow{T_\ell (\psi)} Y$ equals $k$. 
\end{proof}

\begin{rmk} 
\label{rmk:defs_dp}
Some of the pseudometrics between persistence modules that have been studied by other authors fall within the framework we have presented in this subsection and in Section \ref{subsec:pseudom_dp}.
If $C$ is a regular contour, the pseudometric denoted by $d_C$ in \cite[Sect.\ 6]{chacholski2020metrics} coincide with our  pseudometrics of the type $d^1_{\ns^{\infty,C}}$. In particular, for the standard contour (Section \ref{subsec:contours}) the pseudometric $d^1_{\ns^{\infty}}$ coincides with the standard pseudometric already introduced in \cite{scolamiero2017multidimensional}.
As we already mentioned, the algebraic pseudometrics introduced in \cite[Sect.\ 7]{skraba2020wasserstein} are of the form $d^p_{\ns^p}$, thus coinciding with our pseudometrics with the choice $p=q$ and for the standard contour. 
In \cite{giunti2021amplitudes}, the authors propose a framework to study distances on abelian categories which is equivalent to noise systems on abelian categories. The authors of \cite{bubenik2022exact} also study distances on abelian categories, introducing the notion of exact weight, which is more general than noise systems as the first axiom on short exact sequences is relaxed. The so-called path metric associated with an exact weight is defined for zigzags of morphisms of arbitrary finite length, but for the particular case of path metrics on noise systems considering spans is sufficient. In this case, the path metric coincides with a pseudometric of the form $d^1_{\ns}$. In particular, the path metric $d_{\mu \circ \dim}$ between persistence modules studied in \cite[Sect.\ 4]{bubenik2022exact} coincides with $d^1_{\ns^1}$ in our notations, while the $p$-Wasserstein distances introduced by the authors are different from our pseudometrics $d^q_{\ns^{p,C}}$.
\end{rmk}

\subsection{Algebraic and combinatorial \texorpdfstring{$(p,C)$}{(p,C)}-Wasserstein distances}
\label{subsec:alg_and_comb_WpC}
In this subsection we consider Wasserstein distances between persistence diagrams. Here, we call these pseudometrics \emph{combinatorial} Wasserstein distances, to distinguish them from the \emph{algebraic} pseudometrics $d^q_{\ns^{p,C}}$ defined on the class of persistence modules. We introduce a new family
of combinatorial Wasserstein distances, parametrized by $p,q\in [1,\infty]$ and a regular action $C$, which generalize the Wasserstein distances commonly used in persistence theory. Finally, we prove isometry results involving the combinatorial Wasserstein distances and the algebraic Wasserstein distances $d^q_{\ns^{p,C}}$ introduced in Section \ref{subsec:pnorms_C}.

Let $U \coloneqq \{(a,b)\in [0,\infty )\times [0,\infty ] \mid a\leq b\}$ be a subset of the extended plane. 
A \textbf{persistence diagram} is a finite multiset $D=\{ x_i \}_{i\in S}$ of elements of $U$. Since $D$ is a multiset, it may happen that $x_i = x_k$ for some $i\ne k$. The \textbf{diagonal} $\Delta$ of $[0,\infty )$ is the set $\Delta \coloneqq  \{ (a,a) \mid a \in [0,\infty ) \} \subset U$.  For all $p\in [1,\infty]$, we denote by $d_p$ the metric on $U$ induced by the $p$-norm, defined by $d_p (x,y) \coloneqq  \left\| x-y \right\|_p$ for all $x,y\in U$, and we denote $d_p (x,\Delta)\coloneqq  \inf_{z\in \Delta} d_p (x,z)$. As is easy to show, if $x=(a,b)$, then $d_p (x,\Delta)= d_p (x,\overline{z})$ with $\overline{z}\coloneqq  (\frac{a+b}{2},\frac{a+b}{2})$.

Let $D=\{x_i\}_{i\in \{1,\ldots, m\}}$ and $D'=\{x'_j\}_{j\in \{1,\ldots, n\}}$ be persistence diagrams. For any $p,q\in [1,\infty]$, the $(p,q)$-\textbf{Wasserstein distance} between $D$ and $D'$ is defined by
\begin{multline*}
W^q_p (D,D') \coloneqq  \\
\inf_{\alpha} \left\| \left( \left\| (d_p (x_i,x'_{\alpha (i)}))_{i\in I} \right\|_q , \left\| (d_p (x_i,\Delta))_{i\in \{1,\ldots ,m\}\setminus I} \right\|_q , \left\| (d_p (\Delta, x'_j))_{j\in \{1,\ldots ,n\}\setminus \alpha (I)} \right\|_q  \right) \right\|_q ,
\end{multline*}
where the infimum is over all injective functions $\alpha \colon  I \to \{1,\ldots ,n\}$, with $I\subseteq \{1,\ldots ,m\}$.

\begin{rmk}
We note that in the literature, the letters $p$ and $q$ are sometimes interchanged with respect to our notation of the parameters of Wasserstein distances between persistence diagrams. This is the case for instance in  \cite[Def.\ 2.7]{skraba2020wasserstein}. Our choice of notation is motivated by symmetry with the definition of algebraic Wasserstein distances, where a norm $\|\cdot \|_q$ is used to ``aggregate'' costs expressed with respect to a norm $\|\cdot \|_p$.
\end{rmk}

Let $\mathcal{D}$ denote the set of all persistence diagrams.  
We define the class function $\Dgm \colon  \Tame \to \mathcal{D}$ sending any persistence module $X$ to the persistence diagram $\Dgm (X)$ such that $X \cong \bigoplus_{(a,b)\in \Dgm (X)} K(a,b)$, where we note that in the right-hand term each bar $K(a,b)$ appears the same number of times as the multiplicity of $(a,b)$ in the multiset $\Dgm (X)$. By virtue of the barcode decomposition theorem (Theorem \ref{thm:barcode_decomp}), the function $\Dgm \colon \Tame \to \mathcal{D}$ induces a bijection
between the set $\Tame/_{\sim}$ of isomorphism classes of persistence modules and $\mathcal{D}$. 

As proven in \cite{skraba2020wasserstein}, if $p=q$ then the algebraic distance $d^q_{\ns^p}$ between persistence modules coincides with the combinatorial distance $W^q_p$ between the associated persistence diagrams.

\begin{thm}[\cite{skraba2020wasserstein}]
\label{thm:comb_alg}
For any $p\in [1,\infty]$ and for any persistence modules $X$ and $Y$ we have
\[
d^{p}_{\ns^{p}}(X,Y) = W^{p}_{p} (\Dgm (X), \Dgm (Y)) .
\]
\end{thm}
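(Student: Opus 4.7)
The plan is to prove the two inequalities $d^p_{\ns^p}(X,Y) \le W^p_p(\Dgm X, \Dgm Y)$ and $d^p_{\ns^p}(X,Y) \ge W^p_p(\Dgm X, \Dgm Y)$ separately for $p \in [1,\infty)$, and then recover the case $p = \infty$ by the limit identity $\|\cdot\|_\infty = \lim_{p \to \infty} \|\cdot\|_p$. For $p < \infty$, equation (\ref{eq:concatenate_p}) rewrites the algebraic side as
\[
d^p_{\ns^p}(X, Y)^p = \inf_{X \xleftarrow{f} Z \xrightarrow{g} Y} \left( \|\ker f\|_p^p + \|\coker f\|_p^p + \|\ker g\|_p^p + \|\coker g\|_p^p \right),
\]
while the combinatorial side is the infimum over partial matchings of the sum of $d_p^p$-contributions from matched pairs together with $d_p(\cdot,\Delta)^p$-contributions from unmatched bars. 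These twin descriptions suggest an atomic correspondence between matched pairs (resp.\ unmatched bars) and summands $Z_i$ of $Z$.

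For $d^p_{\ns^p}\le W^p_p$, I would construct a span from any near-optimal partial matching $\alpha$. After a preliminary step that replaces any matched pair with disjoint supports by two diagonal matchings (which never increases Wasserstein cost when $p\ge 1$, as a short direct computation shows), each overlapping matched pair $\bigl(K(a_x,b_x),K(a_y,b_y)\bigr)$ gives a summand $Z_i := K(\max(a_x,a_y),\min(b_x,b_y))$ of $Z$ together with the canonical inclusions into the two bars, contributing exactly $|a_x - a_y|^p + |b_x - b_y|^p$ to the span cost. Each unmatched bar $K(a,b)$ gives a summand $K(\tfrac{a+b}{2}, b)$ of $Z$, with the natural inclusion on one side and the zero map on the other, contributing $2\cdot(\tfrac{b-a}{2})^p = d_p((a,b),\Delta)^p$. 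Summing over all bars produces a span whose total cost equals the matching cost.

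For the reverse inequality, I would start from any span $X \xleftarrow{f} Z \xrightarrow{g} Y$, factor each morphism through its image as $f = \iota_f \pi_f$ and $g = \iota_g \pi_g$, and apply Theorem \ref{thm:mono-bar} to the monomorphisms $\iota_f, \iota_g$ and Theorem \ref{thm:epi-bar} to the epimorphisms $\pi_f, \pi_g$. Using the uniqueness statement in Theorem \ref{thm:barcode_decomp} to fix a common decomposition $Z = \bigoplus_i Z_i$ shared by both reductions, this produces bar-to-bar morphisms $f_b, g_b$ with $\|\ker f_b\|_p \le \|\ker f\|_p$, $\|\coker f_b\|_p \le \|\coker f\|_p$ and analogously for $g_b$. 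Their bar-to-bar structures determine partial injections $\alpha,\beta$ assigning each $Z_i$ to at most one bar in each of $X,Y$; I would then match $X_{\alpha(i)}$ with $Y_{\beta(i)}$ whenever both are defined and send all remaining bars to the diagonal. Writing $Z_i = K(a_i,b_i)$, $X_{\alpha(i)} = K(a^x,b^x)$, $Y_{\beta(i)} = K(a^y,b^y)$, with $a^x,a^y \le a_i$ and $b^x,b^y \le b_i$, the elementary pointwise bound
\[
|a^x - a^y|^p + |b^x - b^y|^p \le (a_i - a^x)^p + (a_i - a^y)^p + (b_i - b^x)^p + (b_i - b^y)^p,
\]
valid since $|u-v| \le \max(u,v)$ for $u,v\ge 0$, shows that the matched Wasserstein contribution is dominated by the span contribution of $Z_i$. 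For bars sent to the diagonal, the convexity estimate $(u+w-v)^p \le (u+w)^p \le 2^{p-1}(u^p+w^p)$ absorbs the factor $2^{-(p-1)}$ appearing in $d_p(\cdot,\Delta)^p$, using either the lengths of $\coker f_{b,i}$, $\ker f_{b,i}$ and $Z_i$ (for $Z_i$ hitting $X$ but not $Y$), or the full length of a codomain bar in $\coker f_b$.

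The main obstacle I anticipate is arranging the bar-to-bar reductions of $f$ and $g$ to share a single decomposition of $Z$: Theorems \ref{thm:mono-bar} and \ref{thm:epi-bar} each produce a bar-to-bar replacement only up to an isomorphism of $Z$, and aligning the two permutations cleanly relies on Theorem \ref{thm:barcode_decomp}. A secondary but delicate point is the accounting: each $Z_i$ may contribute simultaneously to $\ker$ and $\coker$ of both $f_b$ and $g_b$, so one must verify that every span contribution is used at most once in the bound on the Wasserstein cost, particularly for those $Z_i$ that map to zero under one morphism but not the other.
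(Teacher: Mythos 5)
The paper does not actually prove this statement; it imports it from \cite{skraba2020wasserstein}, so I am judging your argument on its own terms. Your lower bound ($d^p_{\ns^p}\ge W^p_p$) is essentially sound: the four-term pointwise bound for matched pairs and the convexity estimate for diagonal terms are correct, the common-decomposition issue you flag is resolved via Theorem~\ref{thm:barcode_decomp} (compose with automorphisms of $\im f$ and of $Z$ carrying one barcode decomposition bar-wise onto the other, which changes kernels and cokernels only up to isomorphism), and the accounting is clean because the span cost splits into disjoint groups indexed by the summands $Z_i$ and by the bars of $X$, $Y$ not hit by any $Z_i$, each group feeding exactly one matching term. The genuine failure is in the upper bound: the summand $Z_i:=K(\max(a_x,a_y),\min(b_x,b_y))$ does not admit ``canonical inclusions into the two bars.'' By Section~\ref{subsec:persmod}, a nonzero morphism $K(a_1,b_1)\to K(a_2,b_2)$ requires $a_2\le a_1<b_2\le b_1$; since your $Z_i$ has endpoint $\min(b_x,b_y)$, any morphism from it to the bar with the strictly larger endpoint is forced to be zero (and a monomorphism into a bar would even require equal endpoints). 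With one leg zero, that bar sits entirely in the corresponding cokernel, and the span costs far more than $|a_x-a_y|^p+|b_x-b_y|^p$. The correct roof is $Z_i:=K(\max(a_x,a_y),\max(b_x,b_y))$, which is a bar exactly when the supports overlap (your preliminary disjoint-pair reduction guarantees this) and maps nonzero to both bars, with the kernel and cokernel lengths on the two legs summing to exactly $|b_x-b_y|^p$ and $|a_x-a_y|^p$; with this replacement your construction and cost count are correct.

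A secondary gap: the case $p=\infty$ does not follow merely from $\|\cdot\|_\infty=\lim_{p\to\infty}\|\cdot\|_p$, because both $d^p_{\ns^p}$ and $W^p_p$ are infima over infinitely many spans, respectively matchings, and limits do not commute with such infima. It can be recovered either by running both of your arguments verbatim with maxima in place of sums, or by a squeeze: for a fixed span (finitely many bars) the $p$-cost converges to the $\infty$-cost as $p\to\infty$ while $W^\infty_\infty\le W^p_p\le d^p_{\ns^p}$-cost of that span by the finite-$p$ case and monotonicity of norms, and symmetrically for a fixed matching. As written, though, the intersection-bar span is a step that fails, so the proposal needs the correction above before it proves the theorem.
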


It is worth observing that the equality of Theorem \ref{thm:comb_alg} does not hold when $p\ne q$. 
For example, we can consider the persistence modules 
\[
X=K(a_1,a_1+\ell_1)\oplus K(a_2, a_2+\ell_2)\oplus K(a_3, a_3+\ell_3)
\]
with $\ell_1, \ell_2, \ell_3$ positive real numbers, and $0$, the zero module. Then, assuming $q<\infty$, 
\[
d^q_{\ns^p} (X,0) = \left(  \left\| \left( \frac{\ell_1}{2},\frac{\ell_2}{2},\frac{\ell_3}{2} \right) \right\|_{p}^{q} +  \left\| \left( \frac{\ell_1}{2},\frac{\ell_2}{2},\frac{\ell_3}{2} \right) \right\|_{p}^{q}  \right)^{\frac{1}{q}}\]
(as we will prove in Lemma \ref{lem:dX0}), while
\[
W^q_p (\Dgm (X), \Dgm (0)) = \left( \left\| \left( \frac{\ell_1}{2},\frac{\ell_1}{2} \right) \right\|_{p}^{q} +  \left\| \left( \frac{\ell_2}{2},\frac{\ell_2}{2} \right) \right\|_{p}^{q}  +  \left\| \left( \frac{\ell_3}{2},\frac{\ell_3}{2} \right) \right\|_{p}^{q}\right)^{\frac{1}{q}} .
\]

Given a regular contour $C$, we now define a function $\tau_{C} \colon U\to U$ as follows: for $x=(a,b)\in U$, we set $\tau_C (x) = (\ell (0,a), \ell (0,b))$, where $\ell (0,-)$ is the lifetime function associated with $C$ (Section \ref{subsec:contours}). If $D$ is a persistence diagram, then by applying $\tau_C$ to each element of $D$ we obtain a persistence diagram that we denote by $\tau_C (D)$.  Hence, we have a function $\mathcal{D}\to \mathcal{D}$ which we denote again by $\tau_C$, with a slight abuse of notation. If $C$ is the standard contour, then $\tau_C$ is the identity function and in particular $\tau_C (D)=D$. Figure \ref{fig:transformed_pd} illustrates a persistence diagram transformed by applying $\tau_C$ for a contour $C$ of distance type.

Given a regular contour $C$, we define the \textbf{combinatorial $(p,C)$-Wasserstein distance} $W^p_{p,C}$ pulling back the pseudometric $W^p_p$ via $\tau_C \colon  \mathcal{D}\to \mathcal{D}$. Explicitly, for all persistence diagrams $D$ and $D'$, we define $W^p_{p,C}(D,D')\coloneqq  W^p_p (\tau_C (D), \tau_C (D'))$. If $C$ is a regular action, then as a consequence of Corollary \ref{coro:barcode_reparam} we have $\Dgm (T_C(X))=\tau_C (\Dgm (X))$, for every persistence module $X$. This implies, by virtue of Proposition \ref{prop:isometry_pq_pqC_alg} and Theorem \ref{thm:comb_alg}, that 
\[
d^p_{\ns^{p,C}} (X,Y) = W^p_{p,C} (\Dgm (X),\Dgm (Y)),
\]
for all persistence modules $X$ and $Y$.

To summarize, for any $p\in [1,\infty]$ and any regular action $C$, we have a commutative diagram of isometries
\begin{equation*}
\label{diag:isom_d_W}
\begin{tikzcd}
	{(\Tame, d^{p}_{\ns^{p,C}})} && ({\mathcal{D}, W^{p}_{p,C})} \\
	{(\Tame, d^{p}_{\ns^{p}})} && ({\mathcal{D}, W^{p}_{p})}
	\arrow["{T_C}"', from=1-1, to=2-1]
	\arrow["{\tau_C}", from=1-3, to=2-3]
	\arrow["{\Dgm}", from=1-1, to=1-3]
	\arrow["{\Dgm}", from=2-1, to=2-3]
\end{tikzcd}
\end{equation*}

\begin{figure}
     \centering
         \includegraphics[width=0.7\textwidth]{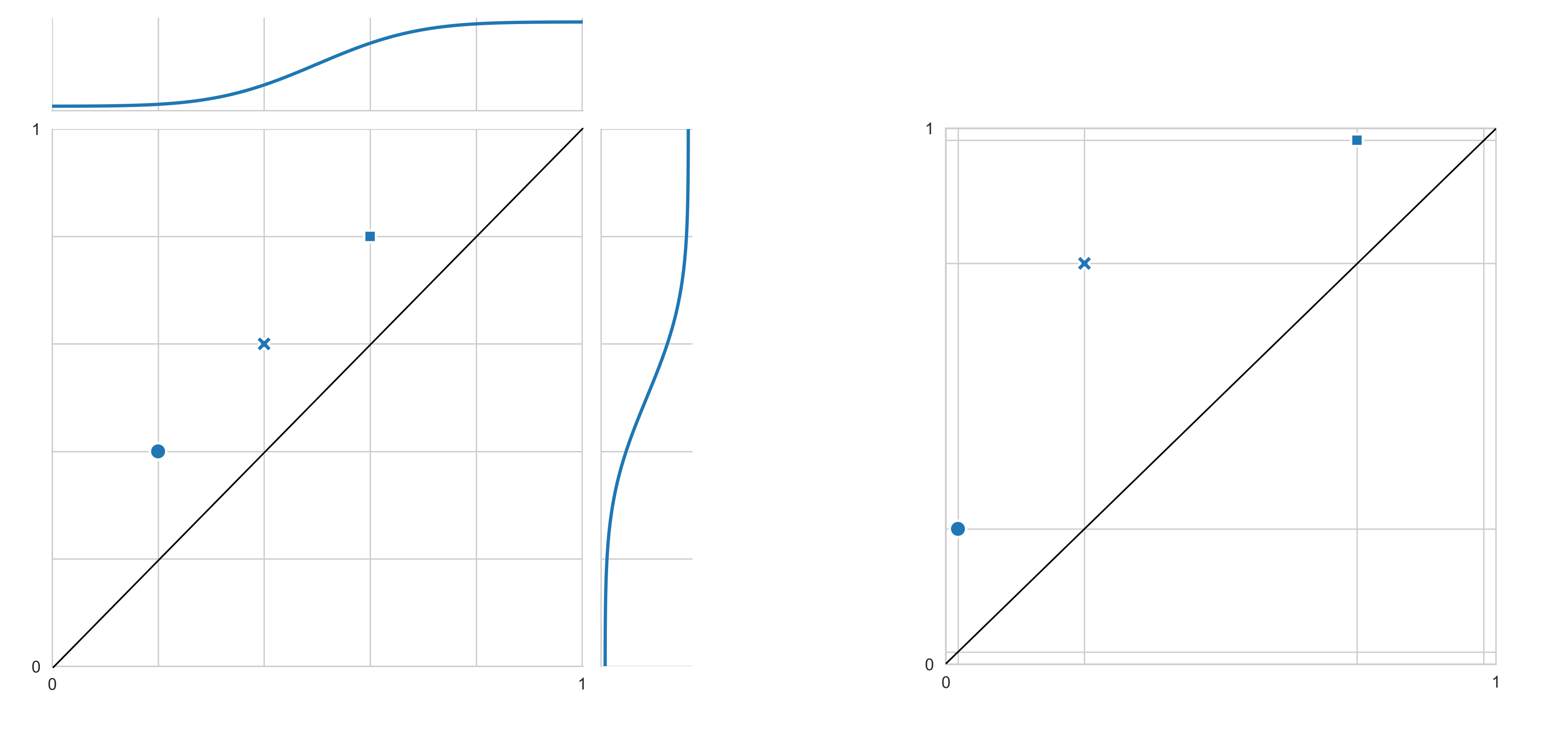}
    \caption{\textbf{Left:} A persistence diagram $D = \{ (0.2, 0.4), (0.4, 0.6), (0.6, 0.8)\}$. A contour $C$ of distance type parametrized by a Gaussian density ($\mu=0.5, \sigma=0.15$) is chosen and the corresponding function $f(x) = \ell(0, x)$ (i.e.\ the Gaussian cumulative distribution function) is shown above and to the right of the persistence diagram.
    \textbf{Right:} The transformed persistence diagram $\tau_C(D) = \{ (\ell(0, 0.2), \ell(0, 0.4)), (\ell(0, 0.4), \ell(0, 0.6)), (\ell(0, 0.6), \ell(0, 0.8))\}$. The regular grid from the left diagram has also been transformed to illustrate how $\tau_C$ stretches the plane.}
    \label{fig:transformed_pd}
\end{figure}

\subsection{Algebraic parametrized Wasserstein distances}
\label{subsec:alg_pC_distance}
The equivalence between algebraic and combinatorial Wasserstein distances for the case $p=q$, described in Section \ref{subsec:alg_and_comb_WpC} or in \cite{skraba2020wasserstein} for the standard contour, implies that in general Wasserstein distances cannot be expressed by an explicit (i.e., not involving an optimization problem) formula depending on the barcode decompositions of the persistence modules we are comparing. However for some special classes of persistence modules this is the case.
The focus of this section is to present such formulas for the exact computation of algebraic Wasserstein distances.
To avoid distinguishing the cases $q<\infty$ and $q=\infty$ in stating the results of this subsection, for $q=\infty$ we set by convention $\frac{1}{q}=0$ and $2^{\frac{1-q}{q}}=2^{-1}$.

\begin{lem}
\label{lem:dX0} 
For all persistence modules $X$ and all $p,q\in [1,\infty]$ we have
\[
d^q_{\ns^{p}}(X,0)= 2^\frac{1-q}{q}\left\| X \right\|_{p} .
\]
\end{lem}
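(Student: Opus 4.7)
The plan is to establish matching upper and lower bounds on $d^q_{\ns^p}(X,0)$. For the \emph{upper bound}, I would exhibit an explicit span. Decompose $X \cong \bigoplus_{i=1}^k K(a_i,b_i)$, and suppose first that all bars are finite. Setting $\mu_i := (a_i+b_i)/2$, the natural maps $K(\mu_i,b_i) \to K(a_i,b_i)$ (valid morphisms between bars since $a_i \le \mu_i < b_i$) assemble into a monomorphism $f \colon Z \hookrightarrow X$ from $Z := \bigoplus_{i=1}^k K(\mu_i,b_i)$, and I take $g \colon Z \to 0$ to be the zero morphism. Then $\ker f = 0$, $\coker f \cong \bigoplus_i K(a_i,\mu_i)$, $\ker g = Z$, and $\coker g = 0$, so $\|\coker f\|_p = \|\ker g\|_p = \tfrac{1}{2}\|X\|_p$, and the cost of the span is $\left\| \left( 0,\tfrac{1}{2}\|X\|_p,\tfrac{1}{2}\|X\|_p,0 \right) \right\|_q = 2^{(1-q)/q}\|X\|_p$, interpreted via the stated conventions when $q = \infty$.

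For the \emph{lower bound}, let $X \xleftarrow{f} Z \xrightarrow{g} 0$ be an arbitrary span. Necessarily $g$ is the zero morphism, so $\ker g = Z$ and $\coker g = 0$. Applying Lemma \ref{lem:ns_ses1} to the exact sequence $0 \to \ker f \to Z \to \im f \to 0$ gives $\|\im f\|_p \le \|Z\|_p = \|\ker g\|_p$, and applying Lemma \ref{lem:ns_ses2} to $0 \to \im f \to X \to \coker f \to 0$ gives $\|X\|_p \le \|\im f\|_p + \|\coker f\|_p$. Setting $a := \|\coker f\|_p$ and $b := \|\ker g\|_p$, I therefore obtain $a + b \ge \|X\|_p$. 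The cost of the span equals $\|(a,b)\|_q$, and by convexity of $x \mapsto x^q$ for $q < \infty$ one has $(a^q+b^q)^{1/q} \ge 2^{(1-q)/q}(a+b) \ge 2^{(1-q)/q}\|X\|_p$; the case $q = \infty$ follows from $\max(a,b) \ge (a+b)/2$, matching the convention.

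Matching the two bounds yields the claimed identity. The only edge case is when $X$ admits an infinite bar, in which case $\|X\|_p = \infty$; the lower-bound argument still forces every span to have infinite cost, so both sides of the identity equal $\infty$. I do not anticipate any serious obstacle: the two ingredients needed are (i) guessing the ``midway'' span $Z = \bigoplus_i K(\mu_i,b_i)$ that achieves the bound, and (ii) the elementary Minkowski-type inequality $(a^q+b^q)^{1/q} \ge 2^{(1-q)/q}(a+b)$ that converts the subadditive bound $a+b \ge \|X\|_p$ coming from the noise system axioms into the desired sharp bound on the $q$-cost.
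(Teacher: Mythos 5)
Your proof is correct and follows essentially the same route as the paper: the same midpoint span $X \hookleftarrow \bigoplus_i K(\tfrac{a_i+b_i}{2},b_i) \to 0$ for the upper bound, and for the lower bound the same combination of monotonicity and subadditivity of $\|\cdot\|_p$ on short exact sequences (which is how the paper argues, via the noise-system axioms for $\ns^p$, i.e.\ Lemmas \ref{lem:ns_ses1} and \ref{lem:ns_ses2}) together with the comparison $\|(a,b)\|_1 \le 2^{1-\frac{1}{q}}\|(a,b)\|_q$. The only cosmetic slip is the phrase ``the cost of the span equals $\|(a,b)\|_q$'': it is in general only bounded below by $\|(a,b)\|_q$ since $\|\ker f\|_p$ may be nonzero, but this is exactly the direction needed, so the argument stands.
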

\begin{proof}
Let $X=\bigoplus_{i=1}^k K(a_i,b_i)$ be a barcode decomposition of $X$, consider a persistence module of the form  $Z=\bigoplus_{i=1}^k K(\frac{a_i+b_i}{2},b_i)$ and a bar-to-bar morphism $f=\bigoplus_{i=1}^k f_i\colon  Z\to X$, with each $f_i \colon K(\frac{a_i+b_i}{2},b_i)\rightarrow K(a_i,b_i)$ a monomorphism between bars. Denote by $g=\bigoplus_{i=1}^k g_i\colon  Z\to 0$ the zero map. The existence of the span $X \xhookleftarrow{f} Z \xrightarrow{g} 0$ implies that $X$ and $0$ are $2^\frac{1-q}{q}\left\| X \right\|_p$ close in $q$-norm (Definition \ref{def:dpS}), proving that   
$d^q_{\ns^{p}}(X,0)\leq 2^\frac{1-q}{q}\left\| X \right\|_p$. Indeed 
$\ker f  = \coker g = 0$ and $\left\| \coker f \right\|_p = \left\| \ker g \right\|_p = \left\| (\frac{b_i-a_i}{2})_{i\in \{1,\ldots ,k\}} \right\|_{p}$. 
The bound is obtained by computing  $\left\| \left( \left\| (\frac{b_i-a_i}{2})_{i\in \{1,\ldots ,k\}} \right\|_{p}, \left\| (\frac{b_i-a_i}{2})_{i\in \{1,\ldots ,k\}} \right\|_{p} \right) \right\|_{q}=2^\frac{1-q}{q} \left\| X \right\|_{p}$.

To prove the converse inequality, let us show that if $d^q_{\ns^{p}}(X,0) < \varepsilon$ then   $2^\frac{1-q}{q}\left\| X \right\|_p < \varepsilon$. If $d^q_{\ns^{p}}(X,0)< \varepsilon$, then there exists a $(\varepsilon_1,\varepsilon_2,
\varepsilon_3, 0)$-span $X \xleftarrow{\varphi} Z \twoheadrightarrow 0$ for some $\varepsilon_1, \varepsilon_2, \varepsilon_3$ in $[0, \infty)$ such that $\left\| (\varepsilon_1,\varepsilon_2,\varepsilon_3) \right\|_q < \varepsilon$. Note that
$X \hookleftarrow \image \varphi \twoheadrightarrow 0$
is then a $(0,\varepsilon_2,\varepsilon_3, 0)$-span. Consider the short exact sequence $\image \varphi \hookrightarrow X \twoheadrightarrow \coker \varphi$. Since $\coker \varphi \in \ns^p_{\varepsilon_2}$ and $\image \varphi \in \ns^p_{\varepsilon_3}$, 
by the third axiom of noise systems we get $X\in \ns^p_{\varepsilon_2+\varepsilon_3}$, and so we get $\left\|X \right\|_p\leq \varepsilon_2+\varepsilon_3$ by definition of $\ns^p$. Furthermore, by inequalities (\ref{eq:monotonicity_p}) between $p$-norms on $\mathbb{R}^2$,  $\varepsilon_2+\varepsilon_3=\left\|(\varepsilon_2,\varepsilon_3)\right\|_1 \leq 2^{1-\frac{1}{q}}\left\|(\varepsilon_2,\varepsilon_3)\right\|_q < 2^{1-\frac{1}{q}} \varepsilon$. Therefore we have $\left\|X \right\|_p < 2^{1-\frac{1}{q}} \varepsilon$ or equivalently $2^\frac{1-q}{q}\left\|X \right\|_p\ < \varepsilon$. We conclude that $d^q_{\ns^{p}}(X,0)\geq 2^\frac{1-q}{q}\left\| X \right\|_p$, and therefore $d^q_{\ns^{p}}(X,0) = 2^\frac{1-q}{q}\left\| X \right\|_p$.  
\end{proof}

\begin{rmk}
\label{rmk:dist_0_p=q}
The formula  $d^q_{\ns^{p}}(X,0) = 2^\frac{1-q}{q}\left\| X \right\|_p$ of Lemma \ref{lem:dX0}
 was already shown for the case $p=q$ in \cite{skraba2020wasserstein} by using the correspondence between combinatorial and algebraic Wasserstein distances.
\end{rmk}

The proof of Lemma \ref{lem:dX0} can be easily extended to the case of a regular action $C$. In this case, we have 
\begin{equation}
\label{eq:pC_dist0}
d^{q}_{\ns^{p,C}} (X,0) = d^{q}_{\ns^{p}} (T_C (X), 0) = 2^\frac{1-q}{q}\left\| T_C (X) \right\|_{p} = 2^\frac{1-q}{q}\left\| X \right\|_{p,C} , 
\end{equation}
where the first equality holds by Proposition \ref{prop:isometry_pq_pqC_alg}, the second by Lemma \ref{lem:dX0} and the third by Proposition \ref{prop:XCp}.
Similar arguments can be applied to all the results of this subsection. For exposition purposes we consider the case of the standard contour throughout the section and collect generalizations of the main results at the end of the subsection in Proposition \ref{prop:closest_rank_C}.

\begin{prop}
\label{prop:dXYV}
Let $X,Y,V$ be persistence modules. Then, for every $p,q\in [1,\infty]$,
\[
d^q_{\ns^{p}} (X\oplus V, Y\oplus V) \le d^q_{\ns^{p}} (X,Y).
\]
\end{prop}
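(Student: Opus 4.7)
The plan is to construct, from any span realizing a given cost between $X$ and $Y$, a span between $X \oplus V$ and $Y \oplus V$ with the same cost. Concretely, suppose $X \xleftarrow{f} Z \xrightarrow{g} Y$ is a $(\varepsilon_1, \varepsilon_2, \varepsilon_3, \varepsilon_4)$-span (Definition~\ref{def:eps span}). I would form the span
\[
X \oplus V \xleftarrow{f \oplus \id_V} Z \oplus V \xrightarrow{g \oplus \id_V} Y \oplus V.
\]
Since kernels and cokernels in $\Tame$ are computed componentwise, one has $\ker(f \oplus \id_V) \cong \ker f \oplus \ker \id_V \cong \ker f$, and likewise $\coker(f \oplus \id_V) \cong \coker f$; the analogous isomorphisms hold for $g \oplus \id_V$. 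Thus the new span is again a $(\varepsilon_1, \varepsilon_2, \varepsilon_3, \varepsilon_4)$-span.

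Consequently, whenever $X$ and $Y$ are $\varepsilon$-close in $p$-norm (Definition~\ref{def:dpS}) via some span, $X \oplus V$ and $Y \oplus V$ are also $\varepsilon$-close in $p$-norm via the corresponding augmented span. Taking the infimum over $\varepsilon$ yields $d^q_{\ns^{p}}(X \oplus V, Y \oplus V) \le d^q_{\ns^{p}}(X, Y)$.

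There is essentially no hard step here; the only thing to check is that the noise system axiom machinery transfers cleanly, and this reduces to the componentwise description of kernels and cokernels in $\Tame$ recalled in Section~\ref{subsec:persmod}, together with the fact that the identity morphism $\id_V$ has zero kernel and zero cokernel (so it contributes no additional norm). I would include one line noting that, since $0 \in \ns^p_0 \subseteq \ns^p_{\varepsilon_i}$ for every $\varepsilon_i$, inserting $\id_V$ does not affect membership in any $\ns^p_{\varepsilon_i}$, which is what ultimately guarantees the cost is preserved.
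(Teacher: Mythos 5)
Your proposal is correct and follows essentially the same argument as the paper: augment any span between $X$ and $Y$ by $\id_V$ to obtain a span between $X \oplus V$ and $Y \oplus V$ with the same cost, since kernels and cokernels are computed componentwise and $\id_V$ contributes nothing. The paper states this in one line; your write-up simply makes the same observation explicit.
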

\begin{proof}
It suffices to observe that for any span $X \xleftarrow{f} Z \xrightarrow{g} Y$, the span $X\oplus V \xleftarrow{f\oplus 1} Z\oplus V \xrightarrow{g\oplus 1} Y\oplus V$ has the same cost. 
\end{proof}

\begin{rmk}
\label{rmrk:changing_matching}
Note that by considering $Y=0$, Proposition \ref{prop:dXYV}
gives \[
d^q_{\ns^{p}} (X\oplus V, V) \le d^q_{\ns^{p}} (X,0)=2^\frac{1-q}{q}\left\| X \right\|_{p}
\]
The converse inequality
$d^q_{\ns^{p}} (X\oplus V, V) \geq d^q_{\ns^{p}} (X,0)=2^\frac{1-q}{q}\left\| X \right\|_{p}$ does not hold in general, as illustrated in the following example. 
Consider $p=q=2$, $X=K(0,6)$ and $V= K(1,5)\oplus K(2,4)$. 
By Lemma \ref{lem:dX0} we have that $d^q_{\ns^{p}} (X,0)=\frac{1}{\sqrt 2} \cdot 6 = \sqrt{18}$. However, $X\oplus V$ and $Y\oplus V$ are $\sqrt 6$-close via the following  $(0, \sqrt{3},\sqrt{3},0)$-span
$$ K(0,6)\oplus K(1,5) \oplus K(2,4) \xleftarrow{f_1\oplus f_2 \oplus f_3} K(1,6)\oplus K(2,5) \oplus K(3,4) \xrightarrow{g_1\oplus g_2 \oplus g_3} K(1,5)\oplus K(2,4) \oplus 0$$
implying that $d^q_{\ns^{p}} (X\oplus V, Y\oplus V)\leq \sqrt{6} < \sqrt{18}=d^q_{\ns^{p}} (X,Y).$
This example is based on the fact that given a span $X \xleftarrow{f} Z \xrightarrow{g} Y$ realizing the distance between $X$ and $Y$, the span  $X\oplus V \xleftarrow{f\oplus 1} Z\oplus V \xrightarrow{g\oplus 1} Y\oplus V$
not always is the one achieving the distance between $X\oplus V$ and $Y\oplus V$. 
\end{rmk}

Let $\{K(a_i,b_i)\}_{i\in \{1,\ldots , k\}}$ be a sequence of bars ordered non-decreasingly by length, that is, $b_1-a_1 \le b_2-a_2 \le \cdots \le b_k-a_k$.
For $j\in \{1,\ldots , k\}$, consider $Z\coloneqq \bigoplus_{i=1}^j K(a_i,b_i)$ and $Y\coloneqq \bigoplus_{i=j+1}^k K(a_i,b_i)$. The remainder of this section is devoted to proving that, in this case, $d^q_{\ns^{p}}(Y\oplus Z,Y) = d^q_{\ns^{p}}(Z,0) = 2^\frac{1-q}{q}\left\| Z\right\|_p.$
In Section \ref{sec:stable_rank}, this result will be used for the computation of the stable rank of a persistence module with respect to $d^q_{\ns^{p}}$.

\begin{prop}
\label{prop:mono-epi-rank}
Let $\ns$ be a noise system.
For any $(\varepsilon_1,\varepsilon_2,\varepsilon_3,\varepsilon_4)$-span $X \xleftarrow{} Z \xrightarrow{} Y$ of persistence modules 
there is a mono-epi $(0,\varepsilon_2,\varepsilon_3,0)$-span $X \xhookleftarrow{} \image f \twoheadrightarrow P$  such that $\rank (P) \le \rank (Y)$. 
\end{prop}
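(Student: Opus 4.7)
The plan is to keep the monomorphism leg of the new span as the image factorization of $f$, and to construct the epimorphism leg by transporting $g$ through this factorization. First I would write $f$ as the composition $Z \twoheadrightarrow \im f \xhookrightarrow{i} X$; the inclusion $i$ is a monomorphism with $\coker i \cong \coker f \in \ns_{\varepsilon_2}$, which handles the $(0,\varepsilon_2)$ part of the cost.

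For the second leg I would define $P := \im g \,/\, g(\ker f)$, where $g(\ker f) \subseteq \im g \subseteq Y$ denotes the image of the composition $\ker f \hookrightarrow Z \xrightarrow{g} Y$. This $P$ is tame since $\Tame$ is closed under images and cokernels. Using $\im f \cong Z/\ker f$, the morphism $g$ descends to a well-defined map $h \colon \im f \to P$ sending $z+\ker f$ to $g(z) + g(\ker f)$, which is surjective by construction. A short diagram chase, equivalently the second isomorphism theorem applied pointwise, will identify
\[
\ker h \cong (\ker g + \ker f)/\ker f \cong \ker g \,/\, (\ker g \cap \ker f),
\]
exhibiting $\ker h$ as a quotient of $\ker g \in \ns_{\varepsilon_3}$. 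The first noise-system axiom on short exact sequences then forces $\ker h \in \ns_{\varepsilon_3}$, completing the verification of the $(0,\varepsilon_2,\varepsilon_3,0)$-span property.

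It remains to bound $\rank P$. Here I would appeal to the main results of Section~\ref{sec:mono_pnorm}: the monomorphism $\im g \hookrightarrow Y$ combined with Theorem~\ref{thm:mono-bar} yields a bar-to-bar monomorphism, and Remark~\ref{rmk:ker_coker_bartobar} then gives $\rank(\im g) \le \rank(Y)$; dually, the epimorphism $\im g \twoheadrightarrow P$ combined with Theorem~\ref{thm:epi-bar} and the same remark gives $\rank P \le \rank(\im g)$. Chaining these inequalities yields $\rank P \le \rank(Y)$. The main obstacle is really a design choice: one must select $P$ so that $h$ is simultaneously an epimorphism, has kernel controlled by $\ns_{\varepsilon_3}$, and admits a rank bound comparable to $\rank Y$. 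The quotient $\im g \,/\, g(\ker f)$ is tailored to make all three properties fall out together—surjectivity from the factorization, the kernel identification from elementary module theory, and the rank bound from the bar-to-bar theorems.
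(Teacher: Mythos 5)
Your proposal is correct and follows essentially the same route as the paper: factor $f$ through its image for the monomorphism leg, and build $P$ as a quotient of $\im g$ for the epimorphism leg, then bound $\rank P \le \rank(\im g) \le \rank(Y)$ via the mono/epi rank monotonicity coming from Theorems~\ref{thm:mono-bar}, \ref{thm:epi-bar} and Remark~\ref{rmk:ker_coker_bartobar}. Your explicit $P = \im g / g(\ker f)$ is exactly the pushout of $\im f \leftarrow Z \rightarrow \im g$ used in the paper, and your identification $\ker h \cong \ker g/(\ker g \cap \ker f)$ together with the first noise-system axiom replaces the paper's citation of the pushout-stability result, giving the same membership $\ker h \in \ns_{\varepsilon_3}$.
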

\begin{proof}
By Theorem \ref{thm:mono-bar} and Remark \ref{rmk:ker_coker_bartobar}, if $U\hookrightarrow V$ is a monomorphism between persistence modules, then $\rank (U) \le \rank (V)$, and similarly if  $V\twoheadrightarrow U$ is an epimorphism, then $\rank (U) \le \rank (V)$. Let $X \xleftarrow{f} Z \xrightarrow{g} Y$ be a $(\varepsilon_1,\varepsilon_2,\varepsilon_3,\varepsilon_4)$-span of persistence modules, and consider the following diagram in $\Tame$, where the square is a push-out:
\[
\begin{tikzcd}[column sep=small, row sep=small]
& & Z \arrow[dl, "f"', two heads] \arrow[dr, "g", two heads] & & \\
& \image f \arrow[dl, "j"', hook'] \arrow[dr, "g'", two heads] & & \image g \arrow[dl, "f'", two heads] \arrow[dr, "i", hook] & \\
X & & P & & Y 
\end{tikzcd}
\]
Since $f'$ is an epimorphism and $i$ is a monomorphism, $\rank (P)\le \rank (\image g) \le \rank (Y)$. We consider the span $X \xhookleftarrow{j} \image f \overset{g'}\twoheadrightarrow P$. Clearly, the kernel of the corestriction $g\colon Z\twoheadrightarrow \image g$ still belongs to $\ns_{\varepsilon_3}$, and its cokernel is zero. Then, by Proposition $8.1$ in \cite{scolamiero2017multidimensional}, $\ker g' \in \ns_{\varepsilon_3}$ and $\coker g'=0$. The kernel of $j$ is $0$, while its cokernel belongs to $\ns_{\varepsilon_2}$, as it coincides with the cokernel of $f \colon Z\to X$.
\end{proof}

\begin{lem}
\label{lem:cost_XY}
Let $p,q\in [1,\infty]$, and let $[a_i,b_i]$ be nonempty intervals in $[0,\infty )$, for $i\in \{1,\ldots ,j\}$. The function $\gamma \colon  \prod_{i=1}^j [a_i,b_i] \to [0,\infty )$ defined by
\[
\gamma (x_1, \ldots , x_j) \coloneqq  \left\| \left( \left\| (x_1 - a_1, \ldots ,x_j - a_j)\right\|_{p}, \left\| (b_1 - x_1, \ldots , b_j - x_j) \right\|_{p}  \right) \right\|_{q }
\]
has a global minimum at $(\frac{a_1 +b_1}{2},\ldots ,\frac{a_j +b_j}{2})$. 
\end{lem}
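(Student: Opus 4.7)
The plan is to exploit a natural symmetry of $\gamma$ together with convexity, and to avoid any calculus or coordinate-wise optimization. Let $m := (\tfrac{a_1+b_1}{2},\ldots,\tfrac{a_j+b_j}{2})$. I would define the involution
\[
T : \prod_{i=1}^j [a_i,b_i] \longrightarrow \prod_{i=1}^j [a_i,b_i], \qquad T(x_1,\ldots,x_j) := (a_1+b_1-x_1,\ldots,a_j+b_j-x_j).
\]
Under $T$, the vectors $(x_i - a_i)_i$ and $(b_i - x_i)_i$ are swapped, and since the outer $q$-norm treats its two arguments symmetrically, we get $\gamma \circ T = \gamma$. Note also that $m$ is the unique fixed point of $T$, and that for every $x$ in the domain the midpoint $\tfrac12(x+T(x))$ equals $m$.

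Next, I would argue that $\gamma$ is convex on its (convex) domain. The coordinate maps $x \mapsto x_i - a_i$ and $x \mapsto b_i - x_i$ are affine, so the two inner functions $x \mapsto \|(x_1-a_1,\ldots,x_j-a_j)\|_p$ and $x \mapsto \|(b_1-x_1,\ldots,b_j-x_j)\|_p$ are convex and nonnegative on $\prod_i [a_i,b_i]$ (compositions of a norm with an affine map). The outer map $(s,t) \mapsto \|(s,t)\|_q$ is convex on $\mathbb{R}^2$ and, restricted to $[0,\infty)^2$, is nondecreasing in each variable; it is a standard fact that composing a convex function that is nondecreasing in each coordinate with a tuple of convex functions is again convex, which gives convexity of $\gamma$.

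With these two ingredients, the conclusion is immediate: for any $x$ in the domain,
\[
\gamma(m) \;=\; \gamma\!\left(\tfrac{x+T(x)}{2}\right) \;\leq\; \tfrac{1}{2}\gamma(x) + \tfrac{1}{2}\gamma(T(x)) \;=\; \gamma(x),
\]
showing that $m$ is a global minimum of $\gamma$.

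The main subtlety I expect is handling the boundary cases $p = \infty$ or $q = \infty$ cleanly: here $\|\cdot\|_\infty$ is a max rather than a smooth power, but it is still convex and (on $[0,\infty)^n$) coordinatewise nondecreasing, so the same composition argument for convexity goes through without change. No calculus or case analysis on $p,q$ is needed, which is the advantage of this symmetry-plus-convexity route over differentiating $\gamma$.
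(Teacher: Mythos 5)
Your proof is correct and follows essentially the same route as the paper's: the paper also establishes convexity of $\gamma$ (norms composed with affine maps) and uses its invariance under the point reflection $x \mapsto a+b-x$ through the midpoint to conclude that the midpoint is a global minimum. Your write-up merely makes the midpoint-convexity inequality and the composition argument for convexity explicit, and correctly notes that no separate treatment of $p=\infty$ or $q=\infty$ is needed.
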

\begin{proof}
The function $\gamma$ is continuous with a compact domain, so it admits a global minimum by the extreme value theorem. Moreover, it is convex because norms are convex functions.

Write $a = (a_1, \ldots, a_j)$, $b = (b_1, \ldots, b_j)$ and $x = (x_1, \ldots, x_j)$ in $\mathbb{R}^j$. Since $\gamma (x)=\gamma (a+b-x)$ for every $x$, the function $\gamma$ is invariant under point reflection through $\frac{a+b}{2}$. By convexity, we conclude that $\frac{a+b}{2}$ is a global minimum of $\gamma$.
\end{proof}

\begin{prop}
\label{prop:closest_rank}
Let $X=\bigoplus_{i=1}^k K(a_i,b_i)$, with the bars ordered non-decreasingly by length. Let $j\in \{1,\ldots ,k\}$, and let $p,q\in [1,\infty]$. Then, any persistence module $Y$ with $\rank (Y)\le \rank (X) - j$ is such that
\begin{equation*}
\label{eq:closest_rank>=}
d^q_{\ns^{p}}(X,Y) \ge 2^{\frac{1-q}{q}} \textstyle{\left\| \bigoplus_{i=1}^j K(a_i,b_i) \right\|_{p} }.
\end{equation*}
\end{prop}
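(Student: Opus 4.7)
The plan is to reduce the problem to a bar-to-bar mono-epi span and then apply the optimization of Lemma \ref{lem:cost_XY}. Starting from an arbitrary $(\varepsilon_1, \varepsilon_2, \varepsilon_3, \varepsilon_4)$-span $X \xleftarrow{f} Z \xrightarrow{g} Y$, I invoke Proposition \ref{prop:mono-epi-rank} to produce a mono-epi $(0, \varepsilon_2, \varepsilon_3, 0)$-span $X \xhookleftarrow{f'} Z' \overset{g'}{\twoheadrightarrow} P$ with $\rank(P) \le \rank(Y) \le k - j$, whose $q$-cost $\|(\varepsilon_2, \varepsilon_3)\|_q$ is bounded above by $\|(\varepsilon_1, \varepsilon_2, \varepsilon_3, \varepsilon_4)\|_q$. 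Applying Theorem \ref{thm:mono-bar} to $f'$ and Theorem \ref{thm:epi-bar} to $g'$ then produces a bar-to-bar monomorphism $f_b : Z' \hookrightarrow X$ and a bar-to-bar epimorphism $g_b : Z' \twoheadrightarrow P$ with $\|\coker f_b\|_p \le \|\coker f'\|_p$ and $\|\ker g_b\|_p \le \|\ker g'\|_p$. By Theorem \ref{thm:barcode_decomp} one may fix a common barcode decomposition $Z' \cong \bigoplus_{i=1}^m K(a'_i, b'_i)$ compatible with both $f_b$ and $g_b$. Therefore, by monotonicity of the $q$-norm in its nonnegative coordinates, it suffices to prove
\[
\left\|\bigl(\|\coker f_b\|_p, \|\ker g_b\|_p\bigr)\right\|_q \ge 2^{(1-q)/q} \left\|\bigoplus_{i=1}^j K(a_i, b_i)\right\|_p
\]
for every such bar-to-bar mono-epi span.

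Next, I use Remark \ref{rmk:ker_coker_bartobar} to describe $\coker f_b$ and $\ker g_b$ explicitly. Let $\alpha : \{1, \ldots, m\} \hookrightarrow \{1, \ldots, k\}$ and $\beta : \{1, \ldots, n\} \hookrightarrow \{1, \ldots, m\}$, with $n := \rank(P) \le k - j$, be the injections induced by $f_b$ and $g_b$ respectively. Set $J := \alpha(\beta(\{1, \ldots, n\}))$ and $T := \{1, \ldots, k\} \setminus J$; since $|J| = n$, one has $|T| \ge j$. For each $r \in T$ I extract nonnegative reals $c_r, k_r$ with $c_r + k_r = L_r := b_r - a_r$ as follows. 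If $r \notin \alpha(\{1, \ldots, m\})$, then the bar $K(a_r, b_r)$ itself is a summand of $\coker f_b$, and I set $c_r := L_r$, $k_r := 0$. Otherwise, $r = \alpha(i)$ for some $i \notin \beta(\{1, \ldots, n\})$, so $\coker(f_b)|_{Z'_i} = K(a_r, a'_i)$ contributes a summand of $\coker f_b$ of length $c_r := a'_i - a_r$ and $Z'_i$ contributes a summand of $\ker g_b$ of length $k_r := b'_i - a'_i$; the bar-to-bar mono constraint $b'_i = b_{\alpha(i)} = b_r$ yields $c_r + k_r = L_r$.

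Finally, since $\coker f_b$ and $\ker g_b$ may contain additional nonnegative-length summands (coming from indices in $J$), monotonicity of $p$- and $q$-norms in nonnegative coordinates gives
\[
\left\|(\|\coker f_b\|_p, \|\ker g_b\|_p)\right\|_q \ge \left\|(\|(c_r)_{r \in T}\|_p, \|(k_r)_{r \in T}\|_p)\right\|_q.
\]
Applying Lemma \ref{lem:cost_XY} on the box $\prod_{r \in T}[0, L_r]$ with $x_r = c_r$ (so that $L_r - x_r = k_r$), the right-hand side is at least the global minimum $2^{(1-q)/q} \|(L_r)_{r \in T}\|_p$, attained at $c_r = k_r = L_r / 2$. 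Since the bars of $X$ are ordered non-decreasingly in length and $|T| \ge j$, one has $\|(L_r)_{r \in T}\|_p \ge \|(L_1, \ldots, L_j)\|_p = \|\bigoplus_{i=1}^j K(a_i, b_i)\|_p$, completing the proof. The main obstacle is the combinatorial bookkeeping of the second paragraph, which must correctly pair each bar of $X$ outside $J$ with cokernel and kernel contributions summing to its length; once established, Lemma \ref{lem:cost_XY} delivers the sharp constant $2^{(1-q)/q}$.
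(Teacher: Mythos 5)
Your proposal is correct and follows essentially the same route as the paper's proof: reduce to a mono-epi span via Proposition~\ref{prop:mono-epi-rank}, replace both legs by bar-to-bar morphisms via Theorems~\ref{thm:mono-bar} and~\ref{thm:epi-bar}, pair each bar of $X$ outside the image with cokernel/kernel pieces summing to its length, and conclude with Lemma~\ref{lem:cost_XY} and the non-decreasing ordering of the bars; the paper merely phrases the same argument as a proof by contradiction. The only points you pass over quickly---the case of bars with $b_r=\infty$ (where the cost is trivially infinite, so Lemma~\ref{lem:cost_XY} is not needed) and the choice of a barcode decomposition of $Z'$ simultaneously compatible with both bar-to-bar maps (which Theorem~\ref{thm:barcode_decomp} alone does not literally provide)---are handled just as briefly, or implicitly, in the paper's own proof.
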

\begin{proof}
We prove the claim by contradiction. Suppose that there exists a persistence module $Y$ such that $\rank (Y)\le \rank (X) - j$ and 
\begin{equation*}
\label{eq:closest_rank<}
d^q_{\ns^{p}}(X,Y) < 2^{\frac{1-q}{q}} \textstyle{\left\| \bigoplus_{i=1}^j K(a_i,b_i) \right\|_{p} }.
\end{equation*}
By definition, there exists a span $X \xleftarrow{f} Z \xrightarrow{g} Y$ such that
\begin{equation}
\label{eq:span_cost_closest_rank}
\left\| \left( \left\| \ker f \right\|_{p}, \left\| \coker f \right\|_{p} , \left\| \ker g \right\|_{p},  \left\| \coker g \right\|_{p}  \right) \right\|_{q} < 2^{\frac{1-q}{q}} \textstyle{\left\| \bigoplus_{i=1}^j K(a_i,b_i) \right\|_{p} } . 
\end{equation}
By Proposition \ref{prop:mono-epi-rank} we can assume (possibly after replacing $Y$ with a persistence module of smaller or equal rank) that the span above is mono-epi, that is, of the form $X \xhookleftarrow{f} Z \overset{g}\twoheadrightarrow Y$. By Theorems \ref{thm:mono-bar} and \ref{thm:epi-bar}, we can moreover assume that $f$ and $g$ are bar-to-bar morphisms. 

Thus, we can consider a barcode decomposition $Z=\bigoplus_{i=1}^k Z_i$, with some of the $Z_i$ possibly zero, and a barcode decomposition $Y=\bigoplus_{i=1}^k Y_i$, with at least $j$ of the $Y_i$ equal to zero by assumption, together with morphisms between bars $K(a_i,b_i) \xhookleftarrow{f_i} Z_i \overset{g_i}\twoheadrightarrow Y_i$ such that $f=\bigoplus_{i=1}^k f_i$ and $g=\bigoplus_{i=1}^k g_i$. 
Let $I\subseteq \{1,\ldots ,k\}$, with $\lvert I\rvert \ge j$, be the subset of the indices $i$ such that $Y_i=0$. For every
$i\in I$, we have $K(a_i,b_i) \xhookleftarrow{f_i} Z_i \overset{g_i}\twoheadrightarrow 0$, with $Z_i =K(x_i,b_i)$ for some $a_i \le x_i \le b_i$, where $K(b_i,b_i)$ denotes the zero module. 
Since $\ker f = \bigoplus_{i=1}^k \ker f_i$ and $\coker f =  \bigoplus_{i=1}^k \coker f_i$, by Remark \ref{rmk:ker_coker_bartobar} we observe that $\bigoplus_{i\in I} K(a_i,x_i)$ is a direct summand of $\coker f$, and similarly that $\bigoplus_{i\in I} K(x_i,b_i)$ is a direct summand of $\ker g$, which gives
\begin{align*}
\left\| \coker f \right\|_{p} &\ge \left\| \oplus_{i\in I} K(a_i,x_i) \right\|_{p} = \left\| (x_i-a_i)_{i\in I} \right\|_{p} ,  \\
\left\| \ker g \right\|_{p} &\ge \left\| \oplus_{i\in I} K(x_i,b_i) \right\|_{p} = \left\| (b_i-x_i)_{i\in I} \right\|_{p} .
\end{align*}
If $b_i <\infty$ for all $i\in I$, it is easy to show using Lemma \ref{lem:cost_XY} that the cost of the span is
\[
\left\| \left( \left\| \coker f \right\|_{p}, \left\| \ker g \right\|_{p}  \right) \right\|_{q} \ge  2^{\frac{1-q}{q}} \textstyle{\left\| (b_i-a_i)_{i\in I} \right\|_{p} } = 2^{\frac{1-q}{q}} \textstyle{\left\| \bigoplus_{i\in I} K(a_i,b_i) \right\|_{p} },
\]
and the same inequality clearly holds if $b_i=\infty$ for some $i\in I$. However, since $\lvert I\rvert \ge j$, the right-hand side of the inequality cannot be smaller than
\[
2^{\frac{1-q}{q}} \textstyle{\left\| (b_i-a_i)_{i\in \{1,\ldots ,j\}} \right\|_{p} } = 2^{\frac{1-q}{q}} \textstyle{\left\| \bigoplus_{i=1}^j K(a_i,b_i) \right\|_{p} } ,
\]
and this contradicts (\ref{eq:span_cost_closest_rank}).
\end{proof}

\begin{prop} 
\label{prop:d_X_deleting_bars}
Let $X=\bigoplus_{i=1}^k K(a_i,b_i)$, with the bars ordered non-decreasingly by length. Let $j\in \{1,\ldots ,k\}$, and let $Y=\bigoplus_{i=j+1}^k K(a_i,b_i)$ (with $Y=0$ when $j=k$). Then, for all $p,q\in [1,\infty]$,
\begin{equation}
\label{eq:j_short_bars}
d^q_{\ns^{p}}(X,Y) =2^{\frac{1-q}{q}} \textstyle{\left\| \bigoplus_{i=1}^j K(a_i,b_i) \right\|_{p} }.
\end{equation}
\end{prop}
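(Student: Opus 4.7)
The result follows from combining a concrete span witnessing the upper bound with the lower bound supplied by Proposition~\ref{prop:closest_rank}. Write $W := \bigoplus_{i=1}^j K(a_i,b_i)$ so that $X \cong W \oplus Y$. The task splits cleanly into the two inequalities.

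For the upper bound, the plan is to exhibit an explicit mono-epi span from $X$ to $Y$ whose cost equals $2^{(1-q)/q}\|W\|_p$. Assume first that $b_i<\infty$ for all $i\le j$, and set
\[
Z := \bigoplus_{i=1}^j K\!\left(\tfrac{a_i+b_i}{2},\, b_i\right)\oplus Y.
\]
Define $f\colon Z\hookrightarrow X$ as the bar-to-bar monomorphism that on each summand $K(\tfrac{a_i+b_i}{2},b_i)$ is the canonical inclusion into $K(a_i,b_i)$, and is the identity on $Y$; define $g\colon Z\twoheadrightarrow Y$ as the bar-to-bar epimorphism that kills the first $j$ summands and is the identity on $Y$. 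Using the formulas for kernels and cokernels of bar-to-bar morphisms (Remark~\ref{rmk:ker_coker_bartobar}) together with the description of morphisms between bars, I get $\ker f = 0$, $\coker g = 0$, and
\[
\coker f \cong \bigoplus_{i=1}^j K\!\left(a_i,\tfrac{a_i+b_i}{2}\right),\qquad \ker g \cong \bigoplus_{i=1}^j K\!\left(\tfrac{a_i+b_i}{2}, b_i\right),
\]
so $\|\coker f\|_p = \|\ker g\|_p = \tfrac12\|W\|_p$. Hence the $(p,q,D)$-cost of this span is
\[
\left\|\left(\tfrac12\|W\|_p,\tfrac12\|W\|_p\right)\right\|_q \;=\; 2^{\frac{1-q}{q}}\|W\|_p,
\]
which gives $d^q_{\ns^p}(X,Y)\le 2^{(1-q)/q}\|W\|_p$. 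If some $b_i=\infty$ for $i\le j$, then $\|W\|_p=\infty$ and the inequality is trivial, so nothing further needs to be shown for the upper bound.

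For the lower bound, note that $\rank(Y) = k-j = \rank(X)-j$, so Proposition~\ref{prop:closest_rank} applied directly to this $Y$ yields
\[
d^q_{\ns^p}(X,Y) \;\ge\; 2^{\frac{1-q}{q}}\left\|\bigoplus_{i=1}^j K(a_i,b_i)\right\|_p \;=\; 2^{\frac{1-q}{q}}\|W\|_p.
\]
Combining the two bounds gives the equality~(\ref{eq:j_short_bars}).

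The only subtlety to verify carefully is that the upper-bound span is well-defined when some of the short bars happen to be finite while $Y$ may contain infinite bars; since the two parts of $Z$ are treated independently (and $Y$ maps identically to itself on both sides), this causes no issue, and the essential work has already been done inside Proposition~\ref{prop:closest_rank} via the optimization in Lemma~\ref{lem:cost_XY}. I do not expect any genuine obstacle beyond bookkeeping.
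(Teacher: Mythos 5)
Your proposal is correct and follows essentially the same route as the paper: the lower bound via Proposition~\ref{prop:closest_rank} applied to $Y$ of rank $\rank(X)-j$, and the upper bound via the same mono-epi bar-to-bar span with middle module $\bigoplus_{i=1}^j K(\tfrac{a_i+b_i}{2},b_i)\oplus Y$, whose cokernel and kernel each have $p$-norm $\tfrac12\|W\|_p$. Your extra remark about infinite bars among the first $j$ is a harmless addition (the bound is trivial there), not a deviation in method.
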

\begin{proof}
Since $\rank (Y)= \rank (X)-j$, Proposition \ref{prop:closest_rank} gives us the inequality  
\[
d^q_{\ns^{p}}(X,Y) \ge 2^{\frac{1-q}{q}} \textstyle{\left\| \bigoplus_{i=1}^j K(a_i,b_i) \right\|_{p} } . 
\]
The other inequality, as already noticed in Remark \ref{rmrk:changing_matching}, follows from Proposition \ref{prop:dXYV} and Lemma \ref{lem:dX0} showing that $
d^q_{\ns^{p}} ( Z \oplus Y, Y) \le d^q_{\ns^{p}} (Z,0)=2^\frac{1-q}{q}\left\|  Z \right\|_{p}
$  with $Z=\bigoplus_{i=1}^j K(a_i,b_i)$.
\end{proof}

In the final part of this subsection we generalize some results from the case of the standard contour to the case of any regular action $C$.

\begin{defn}
\label{def:bars_ordered}
Let $C$ be a regular contour, and let $X=\bigoplus_{i=1}^k K(a_i,b_i)$. We say that (the barcode decomposition of) $X$ has \textbf{bars ordered non-decreasingly by lifetime} if $\ell (a_1,b_1)\le \ell (a_2,b_2)\le \cdots \le \ell (a_k,b_k)$, where
$\ell$ denotes the lifetime function associated with $C$ (see Section \ref{subsec:contours}).
\end{defn}

\begin{prop}
\label{prop:closest_rank_C}
Let $C$ be a regular action, and let $p,q\in [1,\infty]$. Let $X=\bigoplus_{i=1}^k K(a_i,b_i)$, with bars ordered non-decreasingly by lifetime, and let $j\in \{1,\ldots ,k\}$. Then, for all persistence modules $Y$,
\begin{enumerate}
\item if $\rank (Y)\le \rank (X) - j$, then
\begin{equation*}
\label{eq:closest_rank_C>=}
d^q_{\ns^{p,C}}(X,Y) \ge 2^{\frac{1-q}{q}} \textstyle{\left\| \bigoplus_{i=1}^j K(a_i,b_i) \right\|_{p,C} };
\end{equation*}
\item if $Y=\bigoplus_{i=j+1}^k K(a_i,b_i)$ (with the convention $Y=0$ when $j=k$), then
\[
d^q_{\ns^{p,C}}(X,Y) =2^{\frac{1-q}{q}} \textstyle{\left\| \bigoplus_{i=1}^j K(a_i,b_i) \right\|_{p,C} }.
\]
\end{enumerate}
\end{prop}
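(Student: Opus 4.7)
The plan is to reduce the statement to Propositions~\ref{prop:closest_rank} and \ref{prop:d_X_deleting_bars} (i.e.\ the special case of the standard contour) by transporting everything through the functor $T_C$ and invoking the isometry of Proposition~\ref{prop:isometry_pq_pqC_alg}.

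First, I would apply Proposition~\ref{prop:isometry_pq_pqC_alg} to rewrite the left-hand side in both claims as
\[
d^q_{\ns^{p,C}}(X,Y) \;=\; d^q_{\ns^{p}}\bigl(T_C(X),T_C(Y)\bigr).
\]
By Corollary~\ref{coro:barcode_reparam}, $T_C(X)\cong \bigoplus_{i=1}^k K(\ell(0,a_i),\ell(0,b_i))$ and, since $T_C$ preserves the rank of a persistence module (the number of bars is unchanged), we have $\rank(T_C(X))=\rank(X)$ and $\rank(T_C(Y))=\rank(Y)$. Moreover, by Lemma~\ref{lem:add_lifespans} (which uses that $C$ is an action) the length of the $i$-th bar of $T_C(X)$ equals $\ell(0,b_i)-\ell(0,a_i)=\ell(a_i,b_i)$, so the hypothesis that the bars of $X$ are ordered non-decreasingly by lifetime with respect to $C$ translates into the bars of $T_C(X)$ being ordered non-decreasingly by standard length. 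This is exactly the hypothesis required by Propositions~\ref{prop:closest_rank} and \ref{prop:d_X_deleting_bars}.

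For claim (1), from $\rank(T_C(Y))\le \rank(T_C(X))-j$ and the ordering above, Proposition~\ref{prop:closest_rank} applied to $T_C(X)$ and $T_C(Y)$ yields
\[
d^q_{\ns^{p}}\bigl(T_C(X),T_C(Y)\bigr) \;\ge\; 2^{\frac{1-q}{q}} \bigl\| \textstyle\bigoplus_{i=1}^j K(\ell(0,a_i),\ell(0,b_i)) \bigr\|_{p}.
\]
I then convert the right-hand side back using Corollary~\ref{coro:barcode_reparam} and Proposition~\ref{prop:XCp}: the argument equals $\|T_C(\bigoplus_{i=1}^j K(a_i,b_i))\|_p = \|\bigoplus_{i=1}^j K(a_i,b_i)\|_{p,C}$, which gives the claimed inequality. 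For claim (2), observing that $T_C(Y)\cong \bigoplus_{i=j+1}^k K(\ell(0,a_i),\ell(0,b_i))$ has the form required by Proposition~\ref{prop:d_X_deleting_bars}, the same proposition gives equality instead of inequality and the same conversion via Proposition~\ref{prop:XCp} finishes the argument.

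There is no real obstacle here beyond bookkeeping: once the isometry of Proposition~\ref{prop:isometry_pq_pqC_alg} is in hand, the content of the proof is to check that $T_C$ carries the two hypotheses (the lifetime-ordering and the shape of $Y$ in part (2)) to the analogous length-orderings in the standard contour setting. The slightly subtle point is verifying that ``ordered non-decreasingly by lifetime'' for $X$ is equivalent to ``ordered non-decreasingly by length'' for $T_C(X)$, which is precisely what Lemma~\ref{lem:add_lifespans} provides.
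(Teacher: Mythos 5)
Your proof is correct and follows essentially the same route as the paper: transport through the isometry of Proposition~\ref{prop:isometry_pq_pqC_alg}, apply Propositions~\ref{prop:closest_rank} and~\ref{prop:d_X_deleting_bars} to $T_C(X)$ and $T_C(Y)$ (using that bar lengths of $T_C(X)$ equal the lifetimes, so the ordering hypothesis is preserved), and convert back via Proposition~\ref{prop:XCp}. Your explicit verification of the ordering and rank bookkeeping is a slight elaboration of what the paper states in passing, but the argument is the same.
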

\begin{proof}
The first statement follows from
\begin{align*}
d^q_{\ns^{p,C}}(X,Y) &= d^q_{\ns^{p}}(T_C(X),T_C(Y)) \\ 
&\ge 2^{\frac{1-q}{q}} \textstyle{\left\| \bigoplus_{i=1}^j T_C (K(a_i,b_i))  \right\|_{p} } \\
&= 2^{\frac{1-q}{q}} \textstyle{\left\| \bigoplus_{i=1}^j K(a_i,b_i) \right\|_{p,C} }
\end{align*}
where we are using in sequence Proposition \ref{prop:isometry_pq_pqC_alg}, Proposition \ref{prop:closest_rank} (observing that the length of a bar $T_C(K(a,b))$ coincides with the lifetime $\ell(a,b)$ of $K(a,b)$, see Proposition \ref{prop:bar_reparam}), and Proposition \ref{prop:XCp}.
The second statement is proven similarly, using Proposition \ref{prop:d_X_deleting_bars}.
\end{proof}


\section{Wasserstein stable ranks: computations and stability}
\label{sec:stable_rank}
In Section \ref{sec:Wass_noise}
it was shown that the Wasserstein distances $d^q_{\ns^{p,C}}$ are pseudometrics on $\Tame$. They can therefore be used in the framework of hierarchical stabilization (see Section \ref{subsec:stabilization}) to produce stable invariants of persistence modules. The focus of this section is on one type of such invariants, the \textbf{Wasserstein stable ranks}, which are the hierarchical stabilization of the rank function with respect to Wasserstein distances $d^q_{\ns^{p,C}}.$ Denoting $d^q_{\ns^{p,C}}$ by $d$, the stability result for stable ranks (Proposition \ref{prop:stability_sr}) states that
for every pair of persistence modules $X$ and $Y$
$$
d(X,Y) \geq d_{\bowtie}(\widehat{\rank}_d(X),\widehat{\rank}_d(Y)).
$$

In the case where $p=q$ and $C$ is the standard contour, combining the above inequality with the stability results of
\cite{skraba2020wasserstein} gives several stability results of Wasserstein stable ranks with respect to perturbation of the original data. In particular, \cite[Theorem $4.8$]{skraba2020wasserstein} expresses stability with respect to sublevel set filtrations of monotone functions on cellular complexes, \cite[Theorem $5.1$]{skraba2020wasserstein} expresses stability with respect to the construction of cubical complexes from grey scale images, and \cite[Theorem $5.9$]{skraba2020wasserstein}, expresses stability with respect to Wasserstein distance between point clouds when using the Vietoris-Rips construction. 

In order to use the Wasserstein stable ranks in applications, it is important to be able to efficiently compute them as well as distances between them. In this section we use computations of Wasserstein distances from Section \ref{sec:Wass_noise} to derive a formula for the Wasserstein stable rank and propose a convenient formulation of the interleaving distance between stable ranks.

Having defined a rich family of Wasserstein distances $d^q_{\ns^{p,C}}$, it is natural to ask whether we can in a supervised learning context search for an optimal distance for a problem at hand.
Choosing a suitable parametrization of a contour and leveraging the simple expression of the interleaving distance between Wasserstein stable ranks, in Section \ref{sect:metriclearning} we set up a simple metric learning problem with the aim of observing the interaction between the parameter $p$ and the parameters related to the contour $C$ within the learning.
Preliminary results on the optimization of only a contour in a metric learning framework are presented in \cite{gavfert2018poster}.

\subsection{Computation of the stable rank with Wasserstein distances}
\label{sec:comp_sr_contour}
The results of this subsection provide explicit formulas to compute the stable rank with respect to the Wasserstein distances $d^q_{\ns^{p,C}}$ introduced in Section \ref{sec:Wass_noise}. 
We begin 
by considering the case $p<\infty$. As in the previous section, if $q=\infty$ we set by convention $\frac{1}{q}=0$ and $2^{\frac{1-q}{q}}=2^{-1}$.

\begin{prop}
\label{prop:computing_sr}
Let $p\in [1,\infty )$ and $q\in [1,\infty]$, let $C$ be a regular action, and let $d$ denote the pseudometric $d^q_{\ns^{p,C}}$. 
Let  $X = \bigoplus_{i=1}^k K(a_i,b_i)$, with bars ordered non-decreasingly by lifetime (Definition \ref{def:bars_ordered}), and let $n \coloneqq \lvert \{i\in \{1,\ldots ,k\}\mid b_i <\infty\}\rvert$ denote the number of finite bars of $X$.
Then, there exist real numbers $0=t_0 < t_1 < t_2 < \cdots < t_n$ such that the stable rank function $\widehat{\rank}_{d}(X) \colon  [0,\infty) \to [0,\infty)$ is constant on the intervals $[t_0,t_1)$, $[t_1,t_2)$,\ldots , $[t_{n-1},t_n)$, $[t_n,\infty)$, and 
\begin{equation*}
\label{eq:sr_ti} 
\widehat{\rank}_{d}(X)(t_j) = \rank (X) - j,
\end{equation*}
for every $j\in \{0,1,\ldots ,n\}$. Furthermore,
\[
t_j=2^{\frac{1-q}{q}} \textstyle{\left\| \bigoplus_{i=1}^j K(a_i,b_i) \right\|_{p,C}}= 2^{\frac{1-q}{q}}\left\| ( \ell(a_1,b_1),\ldots ,\ell(a_j,b_j) )\right\|_p 
\]
for every $j\in \{1,\ldots ,n\}$, where $\ell$ is the lifetime function associated with $C$.
\end{prop}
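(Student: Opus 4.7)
The strategy is to define the candidate values $t_j$ directly and verify all required properties by combining both parts of Proposition \ref{prop:closest_rank_C}. Set $t_0 := 0$ and, for $j \in \{1, \ldots, k\}$,
\[
t_j := 2^{(1-q)/q} \left\| \textstyle\bigoplus_{i=1}^j K(a_i,b_i) \right\|_{p,C} = 2^{(1-q)/q} \|(\ell(a_1,b_1),\ldots,\ell(a_j,b_j))\|_p ,
\]
where the second equality is just the definition of the $(p,C)$-norm. Since $C$ is regular, $\ell(0,-)$ is strictly increasing, so $\ell(a_j,b_j) > 0$ for every finite bar; combined with $p < \infty$, this gives $t_j^p - t_{j-1}^p = (2^{(1-q)/q})^p\,\ell(a_j,b_j)^p > 0$, and therefore $0 = t_0 < t_1 < \cdots < t_k$.

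Next I would establish $\widehat{\rank}_d(X)(t_j) = \rank(X) - j$ for every $j$. The upper bound is furnished by Proposition \ref{prop:closest_rank_C}(2) applied to $Y_j := \bigoplus_{i=j+1}^k K(a_i, b_i)$: this module has $\rank(Y_j) = \rank(X) - j$ and satisfies $d(X, Y_j) = t_j$, providing a valid competitor in the minimization defining $\widehat{\rank}_d(X)(t_j)$. For the matching lower bound, suppose for contradiction that some $Y$ satisfies $\rank(Y) \le \rank(X) - j - 1$ and $d(X,Y) \le t_j$; then Proposition \ref{prop:closest_rank_C}(1), applied with $j+1$ in place of $j$, forces $d(X,Y) \ge t_{j+1} > t_j$, a contradiction.

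Constancy of $\widehat{\rank}_d(X)$ on each interval $[t_{j-1}, t_j)$ then follows from a two-sided squeeze. Since $\widehat{\rank}_d(X)$ is non-increasing, for $t$ in this interval we get $\widehat{\rank}_d(X)(t) \le \widehat{\rank}_d(X)(t_{j-1}) = \rank(X) - j + 1$ from the previous step. Conversely, Proposition \ref{prop:closest_rank_C}(1) applied with parameter $j$ shows that no $Y$ with $\rank(Y) \le \rank(X) - j$ can satisfy $d(X,Y) \le t < t_j$, so $\widehat{\rank}_d(X)(t) \ge \rank(X) - j + 1$ as well. On $[t_k, \infty)$ the value is constantly $0$, since $\rank(X) = k$ forces $\widehat{\rank}_d(X)(t_k) = 0$ and the function is non-negative.

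The main obstacle is not in the core argument, which is a short bookkeeping exercise once Proposition \ref{prop:closest_rank_C} is available, but rather in verifying that the $t_j$ are \emph{strictly} ordered, as the strict inequality $t_{j+1} > t_j$ is what makes the contradiction arguments close. This strict ordering is precisely where the hypothesis $p < \infty$ (ensuring a genuine $\ell_p$-sum rather than a max) and the regularity of $C$ (ensuring $\ell(a_j,b_j) > 0$ on finite bars) both enter in an essential way.
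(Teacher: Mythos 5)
Your proof is correct and follows essentially the same route as the paper: both define $t_j$ by the same formula and derive everything from the two parts of Proposition \ref{prop:closest_rank_C}, with strictness of $t_0<t_1<\cdots<t_k$ coming from $p<\infty$ and the positivity of lifetimes of finite bars. You simply spell out the interval-by-interval verification that the paper leaves implicit.
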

\begin{proof}
For every $j\in \{1,\ldots ,k\}$, by Proposition \ref{prop:closest_rank_C} 
$Y_j\coloneqq \bigoplus_{i=j+1}^k K(a_i,b_i)$ is the closest persistence module to $X$ (in the pseudometric $d^q_{\ns^{p,C}}$) such that $\rank (Y_j) = \rank (X) - j$. 
We have
\[
d^q_{\ns^{p,C}}(X,Y_j) =2^{\frac{1-q}{q}} \textstyle{\left\| \bigoplus_{i=1}^j K(a_i,b_i) \right\|_{p,C} } \eqqcolon t_j,
\]
with $t_j <\infty$ if, and only if, $j\in \{1,\ldots ,n\}$.
Lastly, we observe that $0=t_0 < t_1 < t_2 < \cdots < t_n$ as a consequence of the assumption $p<\infty$.
\end{proof}

In particular, when $p<\infty$, the value of the piecewise constant function $\widehat{\rank}_{d}(X)$ can only decrease by 1 at every discontinuity point $t_j$. For $p=\infty$, the stable rank has a slightly different behavior. Even though we can define the sequence of real numbers $(t_j)_j$ as in Proposition \ref{prop:computing_sr}, we only have $0=t_0 \le t_1 \le t_2 \le \cdots \le t_n$ instead of strict inequalities. Letting $s_m$ denote the $m^{\text{th}}$ smallest value in $\{ t_j\}_j$ we obtain a sequence $0=s_0 < s_1 < s_2 < \cdots < s_{n'}$ such that the stable rank with respect to the pseudometric $d\coloneqq d^q_{\ns^{\infty,C}}$ is constant on the intervals $[s_0,s_1)$,\ldots , $[s_{n'},\infty)$, taking the values 
\[
\widehat{\rank}_{d}(X)(s_m) = \rank (X) - \max \{j \mid t_j=s_m \} .
\]
An explicit formula for the stable rank in the case $p=\infty$ and $q=1$ was first given in \cite{chacholski2020metrics}.

\begin{rmk}
We observe that for a persistence module $X$ of rank $k$, once the $k$ bars in the barcode decomposition of $X$ have been ordered non-decreasingly by lifetime,  the complexity of computing the discontinuity points of the 
the Wasserstein stable rank using Proposition \ref{prop:computing_sr} is linear in $k$. Therefore the computational complexity of the Wasserstein stable rank is $O(k \log k)$, determined by the complexity of the sorting algorithm to order the bars non-decreasingly by lifetime.
\end{rmk}

\subsection{Interleaving distance between stable ranks}
\label{sect:interleaving formula}
The aim of this subsection is to propose a convenient expression for the interleaving distance (Section \ref{subsec:stabilization}) between two non-increasing piecewise constant functions. We assume functions to take only finitely many  values, that is the case of stable ranks which will be the object of our study. 
Let $f,g\colon  [0,\infty) \to [0,\infty)$ be non-increasing piecewise constant functions. 
If $\lim_{t\to \infty} f(t) \ne \lim_{t\to \infty}g(t)$, then $d_{\bowtie}(f,g)=\infty$. For the computation of the interleaving distance we can therefore assume that the functions $f$ and $g$ have the same limit value and denote it by $L$. 
Given a non-increasing piecewise constant function
$f\colon  [0,\infty) \to [0,\infty)$ with limit value $L$,  we define the non-increasing piecewise constant function 
$f^{-1}\colon   [L,\infty)  \to [0,\infty)$
with values $f^{-1}(y) \coloneqq  \inf \{ t \mid f(t)\le y\}$. If in addition the function $f$ is right-continuous, 
then $f^{-1}(y) = \min \{ t  \mid f(t)\le y\}$. 
We observe that for every right-continuous non-increasing piecewise constant function $f$ we have $f^{-1}(f(t))\le t$ for all $t$, and equality holds if $t$ is a discontinuity point of $f$. Moreover, $f(f^{-1}(y))\le y$ for all $y\ge L$, and equality holds if $y\in \image f$. Our focus in this subsection will be on the discontinuity points $\{ t_i\}$ of $f$ and on the values in $\image f$, rather than on the full domain and codomain of $f$, thus justifying our use of the notation $f^{-1}$.  

\begin{prop}
\label{prop:interleavingPCF}
Consider two right-continuous non-increasing piecewise constant functions $f,g\colon  [0,\infty) \to [0, \infty)$ having the same limit value $L$. Using the notation introduced above, we have:
\[
d_{\bowtie} (f,g) = \| f^{-1} - g^{-1}\|_{\infty}.
\]
\end{prop}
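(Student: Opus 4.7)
My plan is to prove the claim by establishing the two inequalities $d_\bowtie(f,g) \le \|f^{-1}-g^{-1}\|_\infty$ and $d_\bowtie(f,g) \ge \|f^{-1}-g^{-1}\|_\infty$, both resting on the following Galois-type fact implied by right-continuity of $f$: for every $y \ge L$, the set $\{t \in [0,\infty) : f(t) \le y\}$ is a closed ray $[f^{-1}(y),\infty)$, so that $f(t) \le y$ if and only if $t \ge f^{-1}(y)$. In particular, $f(f^{-1}(y)) \le y$ and $f^{-1}(f(s)) \le s$ for every $s \ge 0$. The analogous statement holds for $g$.

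For the inequality $d_\bowtie(f,g) \le \|f^{-1}-g^{-1}\|_\infty$, I set $\varepsilon := \|f^{-1}-g^{-1}\|_\infty$ and verify the $\varepsilon$-interleaving conditions of Section \ref{subsec:stabilization}. Given $s \ge 0$, I take $y := f(s) \in [L,\infty)$; then $f^{-1}(y) \le s$, so by choice of $\varepsilon$, $g^{-1}(y) \le f^{-1}(y)+\varepsilon \le s+\varepsilon$, yielding $g(s+\varepsilon) \le y = f(s)$. The identical argument with $f$ and $g$ swapped gives $f(s+\varepsilon) \le g(s)$, so $d_\bowtie(f,g) \le \varepsilon$.

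For the opposite inequality, I first observe that the infimum defining $d_\bowtie(f,g)$ is attained: if $g(t+\varepsilon) \le f(t)$ holds for all $\varepsilon > \varepsilon_0$, then right-continuity of $g$ at $t+\varepsilon_0$ forces $g(t+\varepsilon_0) \le f(t)$ as well, and symmetrically. Setting $\varepsilon := d_\bowtie(f,g)$, I then fix any $y \ge L$, let $s := g^{-1}(y)$ (so $g(s) \le y$), and chain $f(s+\varepsilon) \le g(s) \le y$ to obtain $f^{-1}(y) \le s+\varepsilon = g^{-1}(y)+\varepsilon$. The symmetric inequality $g^{-1}(y) \le f^{-1}(y)+\varepsilon$ is proved the same way. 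Therefore $|f^{-1}(y)-g^{-1}(y)| \le \varepsilon$ for every $y \ge L$, and taking the supremum gives $\|f^{-1}-g^{-1}\|_\infty \le \varepsilon = d_\bowtie(f,g)$.

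The main point of care is the preliminary Galois-type fact: I need to verify that $f^{-1}$ behaves as a genuine inverse in the weak sense that $f(f^{-1}(y)) \le y$ and that $f^{-1}$ converts the order on values into the reverse order on arguments. This reduces to noting that, since $f$ is right-continuous, piecewise constant, and finite-valued with limit $L$, each sublevel set $\{t : f(t) \le y\}$ (for $y \ge L$) is a nonempty half-open interval $[s_i,\infty)$; once this is in place, both directions collapse to one-line computations using monotonicity and the definition of $\varepsilon$.
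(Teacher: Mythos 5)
Your proof is correct and follows essentially the same route as the paper: both directions rest on the adjunction-type inequalities $f(f^{-1}(y)) \le y$ and $f^{-1}(f(t)) \le t$ coming from right-continuity, and your verification that $\| f^{-1}-g^{-1}\|_{\infty}$ is an admissible interleaving constant is the paper's argument almost verbatim. The only (harmless) variation is in the other direction, where you first show that the infimum defining $d_{\bowtie}(f,g)$ is attained and argue with $\varepsilon = d_{\bowtie}(f,g)$ itself, whereas the paper argues for an arbitrary admissible $\varepsilon$ and passes to the infimum at the end.
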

\begin{proof}
Let us define the following subset of $[0,\infty)$,
\[
A(f,g)\coloneqq \{ \varepsilon \in [0,\infty ) \mid  f(t)\ge g(t+\varepsilon) \text{ and } g(t)\ge f(t+\varepsilon), \text{ for all } t\in [0,\infty ) \}.
\]
Remember that, by definition, $d_{\bowtie}(f,g) = \inf A(f,g)$. 

We first prove that $d_{\bowtie} (f,g) \ge \| f^{-1} - g^{-1}\|_{\infty}$. 
Let $\varepsilon \in A(f,g)$. Then, for all $y \geq L$, we have $y\ge f(f^{-1}(y)) \ge g(f^{-1}(y) +\varepsilon)$. Composing by the non-increasing function $g^{-1}$ and recalling that $g^{-1} (g(t))\le t$ for all $t$, we obtain $f^{-1}(y) + \varepsilon \ge g^{-1}(y)$. We have thus shown that $g^{-1}(y) - f^{-1}(y) \le \varepsilon$, for all $y\ge L$ and $\varepsilon \in A(f,g)$, which implies $g^{-1}(y) - f^{-1}(y) \le d_{\bowtie}(f,g)$, for all $y\ge L$. By symmetry in the roles of $f$ and $g$, we conclude that $\lvert g^{-1}(y) - f^{-1}(y)\rvert \le d_{\bowtie}(f,g)$, for all $y\ge L$.

We now prove that $d_{\bowtie} (f,g) \le \| f^{-1} - g^{-1}\|_{\infty}$ by showing that $\varepsilon \coloneqq  \| f^{-1} - g^{-1}\|_{\infty}$ is in $A(f,g)$.
By the definition of $\varepsilon$, $g^{-1}(y) \leq f^{-1}(y) + \varepsilon$ for every $y\in [L, \infty)$. Moreover if $y=f(t)$ for $t\in [0,\infty)$, as discussed above, we have $f^{-1}(y)=f^{-1}(f(t)) \leq t$, which together with the fact that $g$ is a non-increasing function proves the following inequalities:
\[
f(t) = y \geq g(g^{-1}(y)) \geq g(f^{-1}(y) + \varepsilon) \geq g(t + \varepsilon).
\]
By symmetry, we also get $g(t) \geq f(t + \varepsilon)$, and we conclude that $\varepsilon \in A(f,g)$.
\end{proof}

Let $X=\bigoplus_{i=1}^k K(a_i^{x},b_i^{x})$ be a persistence module with bars 
ordered non-decreasingly by lifetime, and with $n$ finite bars. Let $f\coloneqq \widehat{\rank}_d(X)$ denote the corresponding Wasserstein stable rank with respect to the distance $d=d^q_{\ns^{p,C}}$, for some $p,q\in [1,\infty]$ and a regular action $C$. The sequence $0=t_0 \le t_1 \le t_2 \le \cdots \le t_n$ such that $f$ is constant on the intervals $[t_0,t_1)$, $[t_1,t_2)$,\ldots , $[t_{n-1},t_n)$, $[t_n,\infty)$ defined in Section \ref{sec:comp_sr_contour} is enough to encode $f^{-1}$ as a finite vector $\hat{f}^{-1} \coloneqq (\hat{f}^{-1}_i)_{i \in \{ 0, \ldots , n\} }$, where $$\hat{f}^{-1}_i \coloneqq t_{n-i}=2^{\frac{1-q}{q}} \| (\ell(a_1^x,b_1^x) , \ldots ,\ell(a_{n-i}^x,b_{n-i}^x)) \|_p$$ for $i\in \{0, \ldots, n-1\}$ and $\hat{f}^{-1}_{n}=0$. Indeed, the limit value of $f$ is the number $L=k-n$ of infinite bars of $X$; for all $i\in \{0, \ldots, n\}$ we have $f^{-1}(L+i)=\hat{f}^{-1}_{i}$, and for any $y\in [L,\infty)$ the value $f^{-1}(y)$ equals $f^{-1}(L+i)$, where $i\in \{0,\ldots ,n\}$ is the largest integer such that $L+i\le y$. 

Let $Y=\bigoplus_{i=1}^l K(a_i^y,b_i^y)$ be another persistence module with bars ordered non-decreasingly by lifetime, and with $m$ finite bars. Suppose that $X$ and $Y$ have the same number of infinite bars, $L = k-n = l-m$. Let $g\coloneqq \widehat{\rank}_d(Y)$ denote the Wasserstein stable rank of $Y$ with respect to the distance $d=d^q_{\ns^{p,C}}$.
The interleaving distance between $f$ and $g$ can then be written as the $L^{\infty}$ norm of the vector 
$(\hat{f}_i^{-1} - \hat{g}_i^{-1})_{i\in \{0, \ldots ,\text{min}(n,m)\} }$ with components
\begin{equation}
\label{formula_int_distance}
\hat{f}_i^{-1} - \hat{g}_i^{-1}=  2^{\frac{1-q}{q}}( \| (\ell(a_1^x,b_1^x) , \ldots ,\ell(a_{n-i}^x,b_{n-i}^x)) \|_p -  \| (\ell(a_1^y,b_1^y) , \ldots ,\ell(a_{m-i}^y,b_{m-i}^y)) \|_p)
\end{equation}
for $i\in \{0,\ldots, \text{min}(n,m)-1\}$, 
and last component  
\begin{equation*}
\hat{f}_{\min \{n,m\}}^{-1} - \hat{g}_{\min \{n,m\}}^{-1}=\begin{cases}
\| (\ell(a_1^y,b_1^y) , \ldots ,\ell(a_{m-n}^y,b_{m-n}^y)) \|_p & \text{if } \text{min}(n,m)=n<m \\
\| (\ell(a_1^x,b_1^x) , \ldots ,\ell(a_{n-m}^x,b_{n-m}^x)) \|_p & \text{if } \text{min}(n,m)=m<n \\
0 & \text{if } n=m.
\end{cases}
\end{equation*}
We observe that the considered vector encodes the function $(f^{-1} - g^{-1})$ on the  intervals $[L, L+1), \ldots, [\min(k, l), \min(k,l) + 1 )$ of length one where both $f^{-1}$ and $g^{-1}$ are constant. Since $f^{-1}$ and $g^{-1}$ are nonincreasing and for $x \geq \min(k,l)$ either $f^{-1}(x) = 0$ or $g^{-1}(x) = 0$, it is enough to consider those intervals.

\begin{rmk}
For two persistence modules $X$ and $Y$ both of rank $k$, the complexity of computing the interleaving distance is dominated by the sorting of the bars in the respective barcode decompositions of $X$ and $Y$, since forming the vector as in  (\ref{formula_int_distance}) and computing its $L^{\infty}$ norm can be done linearly in $k$. The computational complexity of the interleaving distance between Wasserstein stable ranks is thus $O(k \log k)$.
\end{rmk}

\begin{ex}\label{example:interleaving computation}
Consider a persistence module $Y = \bigoplus_{i=1}^3 K(a_i, b_i)$ with bars ordered non-decreasingly by lifetime and $X =K(a_0, b_0) \oplus Y$ such that $\varepsilon \coloneqq \ell(a_0,b_0)\leq \ell(a_1,b_1)$. 
By using the formula (\ref{formula_int_distance})
and observing that
\[
\| (\ell(a_0,b_0) , \ldots ,\ell(a_{i},b_{i}))\|_p -  \| (\ell(a_1,b_1) , \ldots ,\ell(a_i,b_i)) \|_p\leq \ell(a_0,b_0)
\]
for $i\in \{1,2,3\}$ by properties (\ref{eq:monotonicity_p}) and (\ref{eq:concatenate_p}) of $p$-norms, we see that the interleaving distance between  $\widehat{\rank}_d (X)$ and $\widehat{\rank}_d (Y)$ with $d=d^q_{\ns^{p,C}}$ is given by $2^{\frac{1-q}{q}} \varepsilon$.
Note that by Proposition
\ref{prop:closest_rank_C}
 we know $d^q_{\ns^{p,C}}(X,Y) =2^{\frac{1-q}{q}} \textstyle{\left\| K(a_0,b_0) \right\|_{p,C} } = 2^{\frac{1-q}{q}} \varepsilon$. Therefore in this case the interleaving distance between stable ranks with respect to Wasserstein distance coincides with the Wasserstein distance between $X$ and $Y$. Note however that this is not always the case.   
The Wasserstein stable ranks of $X$ and $Y$ with respect to $d^q_{\ns^{p,C}}$, with parameters $q=1$, $p=2$ and $C$ the standard contour, are shown in Figure \ref{fig:inv_stab_illus}, together with  their ``inverse'' functions which are used for the computation of the interleaving distance.

\begin{figure}[h]
     \centering
         \includegraphics[width=1\textwidth]{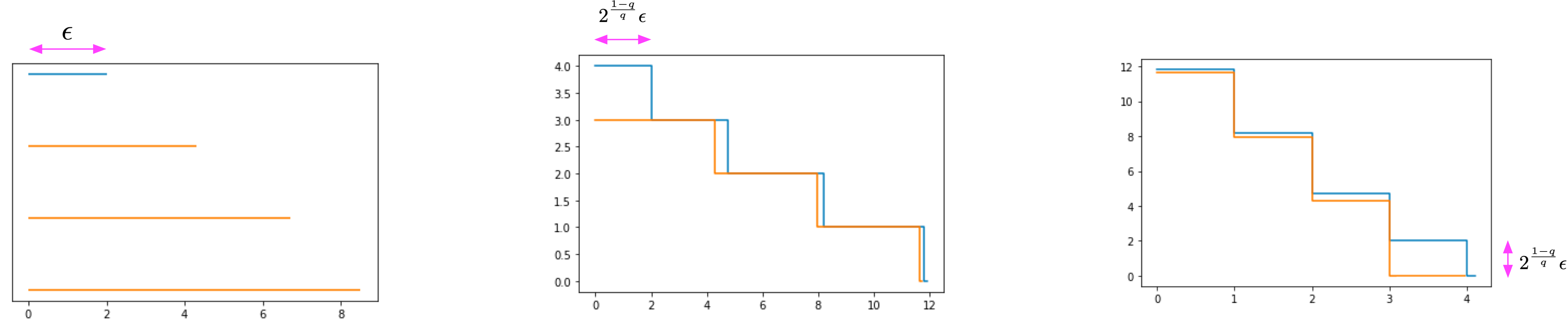}
        {\caption {Schematic representation of the computation of the interleaving distance in Example \ref{example:interleaving computation}.
        {\textbf{Left:} Barcode decomposition of $Y$ in orange and bar $K(a_0,b_0)$ in blue. \textbf{Middle:} Stable ranks computed with standard contour, $q=1$ and $p=2$. The functions $\widehat{\rank}_d(X)$ and $\widehat{\rank}_d(Y)$ are represented in blue and orange, respectively. \textbf{Right:} Inverse stable ranks for the computation of interleaving distance, with $\widehat{\rank}_d^{-1}(X)$ in blue and $\widehat{\rank}_d^{-1}(Y)$ in orange. The interleaving distance between stable ranks can be computed as $\|\widehat{\rank}_d^{-1}(X)-\widehat{\rank}_d^{-1}(Y)\|_{\infty}=2^{\frac{1-q}{q}} \varepsilon$, illustrated with the pink arrow.}
        \label{fig:inv_stab_illus}}}
\end{figure}
\end{ex}

Let us keep denoting $d^q_{\ns^{p,C}}$ by $d$. It follows from triangle inequality and Lemma \ref{lem:dX0} that:
$$d(X,Y)\geq 2^\frac{1-q}{q} \lvert \left\| X \right\|_p-\left\| Y \right\|_p \rvert.$$
This inequality can be refined by $$d(X,Y)\geq d_{\bowtie}(\widehat{\text{rank}}_{d}(X),\widehat{\text{rank}}_{d}(Y))\geq 2^\frac{1-q}{q} \lvert \left\| X \right\|_p-\left\| Y \right\|_p \rvert,$$
where the first inequality is given by the stability theorem of hierarchical stabilization (Proposition \ref{prop:stability_sr}) and the second inequality is provided by the characterization of interleaving distances between stable ranks in Proposition \ref{prop:interleavingPCF} and Equation \eqref{formula_int_distance} stating that the interleaving distance between the Wasserstein stable ranks of persistence modules $X$ and $Y$ is the $L^{\infty}$ norm of a vector with $0^{\text{th}}$ component $2^\frac{1-q}{q} \lvert \left\| X \right\|_p-\left\| Y \right\|_p \rvert$.

An example where the second inequality is strict is provided by Example \ref{example:interleaving computation} for $p>1$, while an example where this is an equality is provided in the case $Y=0$ by Lemma \ref{lem:dX0}. A simple example in which the first inequality is strict is provided instead by $X=K(0,1)$, $Y=K(0,2)$ and $q=2$. 

\begin{rmk}
\label{rmk:stability_Lp}
Since stable ranks are measurable functions $[0,\infty) \to [0,\infty)$, there are many pseudometrics to compare them other than the interleaving distance $d_{\bowtie}$. In particular, one can consider the standard $L^p$-pseudometrics, here denoted by $d_p (f,g)\coloneqq  \left( \int_{0}^{\infty} \lvert f(t)-g(t)\rvert^{p} \,dt \right)^{\frac{1}{p}}$. 
As shown in \cite[Prop. 2.1]{chacholski2020metrics}, the stability theorem of hierarchical stabilization 
implies the following bounds for $d_p$:
\[
c \, d(X,Y)^{\frac{1}{p}}\geq d_{p}(\widehat{\text{rank}}_{d}(X),\widehat{\text{rank}}_{d}(Y)),
\]
for any persistence modules $X$ and $Y$, where $c\coloneqq \max \{ \rank (X), \rank (Y)\}$ and $d$ denotes any pseudometric between persistence modules.  In this article we have chosen to work with the interleaving distance between Wasserstein stable ranks because of the strong stability result, 
expressed as a 1-Lipschitz condition. Lipschitz stability for Wasserstein distances other than $W_1$ can not be obtained for example by considering linear representations of persistence diagrams \cite{hofer2017deep, adams2017persistence,chen2015statistical,kusano2017kernel,reininghaus2015stable} as proved in Theorem 6.3 in \cite{skraba2020wasserstein}. 
The trade-off between stability and the possibility of exploiting a Banach or Hilbert space structure is still to be explored.
\end{rmk}

\subsection{Metric learning}
\label{sect:metriclearning}

We have defined distances $d^q_{\ns^{p,C}}$ between persistence modules, parametrized by $q$, $p$ and by a contour $C$, and computable stable rank invariants with corresponding interleaving distances. These distances can be pulled back to compare persistence modules in $\Tame$ via the function $\widehat{\rank}_{d}$, with $d=d^q_{\ns^{p,C}}$.
Recalling that the stable ranks depend on the pseudometric $d^q_{\ns^{p,C}}$, we now turn to the question of how to choose $p$ and $C$. The optimization of the parameter $q$ is not relevant, since it determines a constant multiplicative factor to the distance of each pair of persistence modules. We thus fix $q=1$ for a direct comparison with the original framework of noise systems.

For brevity, we write $d \coloneqq  d^1_{\ns^{p,C}}$ and $d_{\bowtie,p, C} (X,Y) \coloneqq  d_{\bowtie}(\widehat{\rank}_{d}(X),\widehat{\rank}_{d}(Y))$.
The field of metric learning provides a variety of loss functions suited for different machine learning problems. For example, if we consider a simple binary classification problem we have a dataset of persistence modules $\{ X_i \}_{i \in I}$ and the index set $I$ is partitioned into two sets $A$ and $B$, to represent the labeling. For this problem, a loss function (from \cite{zhao2019learning}), designed to yield small intra-class distances and large inter-class distances can be formulated as:

\begin{equation}
\label{eq:metriclearningloss}
    \mathcal{L} = 
    \frac{ \sum_{i,j \in A} (d_{\bowtie, p, C}(X_i, X_j))^2}{ \sum_{i \in A, j \in I} (d_{\bowtie, p, C}(X_i, X_j))^2 } + \frac{ \sum_{i,j \in B} (d_{\bowtie, p, C}(X_i, X_j))^2 }{ \sum_{i \in B, j \in I} (d_{\bowtie, p, C}(X_i, X_j))^2 } 
\end{equation}

In order to proceed we need to choose a family of contours that is practically searchable when minimizing the loss function above. We work with contours of distance type which are parametrized by densities (see Section \ref{subsec:contours}). In turn, in order to use gradient optimization methods, we want the densities to be parametrized by a finite real-valued parameter vector. To this aim we choose as densities unnormalized Gaussian mixtures $f(x) = \sum_{i=1}^{k} \lambda_i \mathcal{N}(x \mid \mu_i, \sigma_i)$ for some chosen $k$, where $\mathcal{N}$ is Gaussian with mean $\mu_i$ and standard deviation $\sigma_i$, and $\lambda_1 = 1$. 

In summary, the metric learning problem amounts to minimizing the loss function with respect to a parameter vector $\theta \in \mathbb{R}^{3k}$, i.e.\ $\theta = (\mu_1, \ldots, \mu_k, \sigma_1, \ldots, \sigma_k, \lambda_2, \ldots, \lambda_k, p)$, designed to learn conjointly the parameter $p$ and the parameters of the contour of the algebraic Wasserstein distance. The loss function is a simple function of the pairwise interleaving distances between Wasserstein stable ranks of persistence modules in the dataset.
As can be seen in  Proposition \ref{prop:interleavingPCF} and the expression (\ref{formula_int_distance}), the interleaving distance between stable ranks is the $L^{\infty}$ norm of differentiable functions  with respect to $\theta$ and is therefore differentiable almost everywhere with respect to $\theta$, implying the same behavior for the loss function. 
Hence the metric learning problem is amenable to gradient-based optimization methods such as gradient descent.


\section{Examples of analyses with Wasserstein stable ranks}
\label{sec:examples_analyses}
In a first experiment, we show how varying the parameter $p$ affects the distance space of the Wasserstein stable ranks and can serve as a way to weight the importance of long bars versus short bars, for a set of synthetic persistence modules. In a second experiment, we illustrate on a real-world dataset how learning the parameter $p$ together with the parameters of a contour can lead to more discriminative  Wasserstein stable ranks in a classification problem.

\subsection{Synthetic data}
\label{subsec:synthetic}
A straightforward way to apply persistent homology in the context of computer vision is to construct a complex (e.g.\ cubical complex) from the grid of pixels constituting an image. The complex is then filtered based on the grayscale intensity of the pixels (or based on the color channels for color images).

It is easy to see that, in this context, what should be considered as signal versus noise  in a barcode representation of the data is highly dependent on the application. For example, for classification of handwritten digits from the MNIST dataset \cite{garin2019topological, turkevs2021noise} the dominant topological features are often the most discriminative (for instance the existence of a 1-dimensional cycle may be enough to distinguish between digits $0$ and $1$). On the other hand, in biomedical imaging \cite{chung2018topological, qaiser2019fast} pathological states can translate into images with irregularities or lack of homogeneity, associated with high numbers of short-lived components as observed in \cite{garside2019topological}.

Inspired by these applications, we construct two much simpler synthetic datasets of images and associated persistence modules, with the goal of illustrating the effect of choosing the parameter $p$ when using Wasserstein stable ranks. 
The parameter $q$ is set to $1$ and the contour is fixed to be the standard contour. In other words, we study the effect of the parameter $p$ on how the function $\widehat{\rank}_d$, with $d=d^1_{\ns^p}$, maps persistence modules onto the space of stable ranks, endowed with the interleaving distance. Each dataset is composed of $100$ images together with their class label, A or B. Each image is composed of one block of high-intensity pixels and a number of blocks of low-intensity pixels (while the size of the pixel blocks does not have a direct impact on the following persistent homology analysis, the high-intensity block is made larger for visual clarity, see Figures \ref{fig:artificial1_imgs_bars}, \ref{fig:artificial2_imgs_bars}). The images are represented as cubical complexes on which super-level set filtration is performed and we analyze the $H_0$ barcodes obtained from this process. Since we use pixel intensity $[0, 255]$ and super-level sets are used, the resulting filtration scale is $[255, -\infty)$. This is capped to the minimum pixel value, $0$, and transformed as $255 - x$ to obtain a filtration scale $[0, 255]$ as can be seen in the barcodes in Figures \ref{fig:artificial1_imgs_bars}, \ref{fig:artificial2_imgs_bars}.

\begin{itemize}
    \item In Dataset 1 the pixels in the high-intensity block have slightly higher intensity in images from class A (uniformly distributed between $245$ and $255$) compared to images of class B (between $200$ and $210$). The low-intensity blocks however follow the same distribution for images of both classes (the number of blocks is uniformly distributed between $50$ and $100$ and the intensity is between $1$ and $10$). Sample images and barcodes are shown in Figure \ref{fig:artificial1_imgs_bars}.
    \item In Dataset 2 on the other hand, the intensity of the high-intensity blocks follows the same distribution for both classes (uniformly distributed between $100$ and $255$). The number of low-intensity blocks however follows a different distribution for Class A (between $20$ and $30$) and Class B (between $120$ and $130$). Their intensity is the same for both classes (between $1$ and $10$). Sample images and barcodes are shown in Figure \ref{fig:artificial2_imgs_bars}.
\end{itemize}

\begin{figure}[ht]
     \centering
         \includegraphics[width=0.72\textwidth]{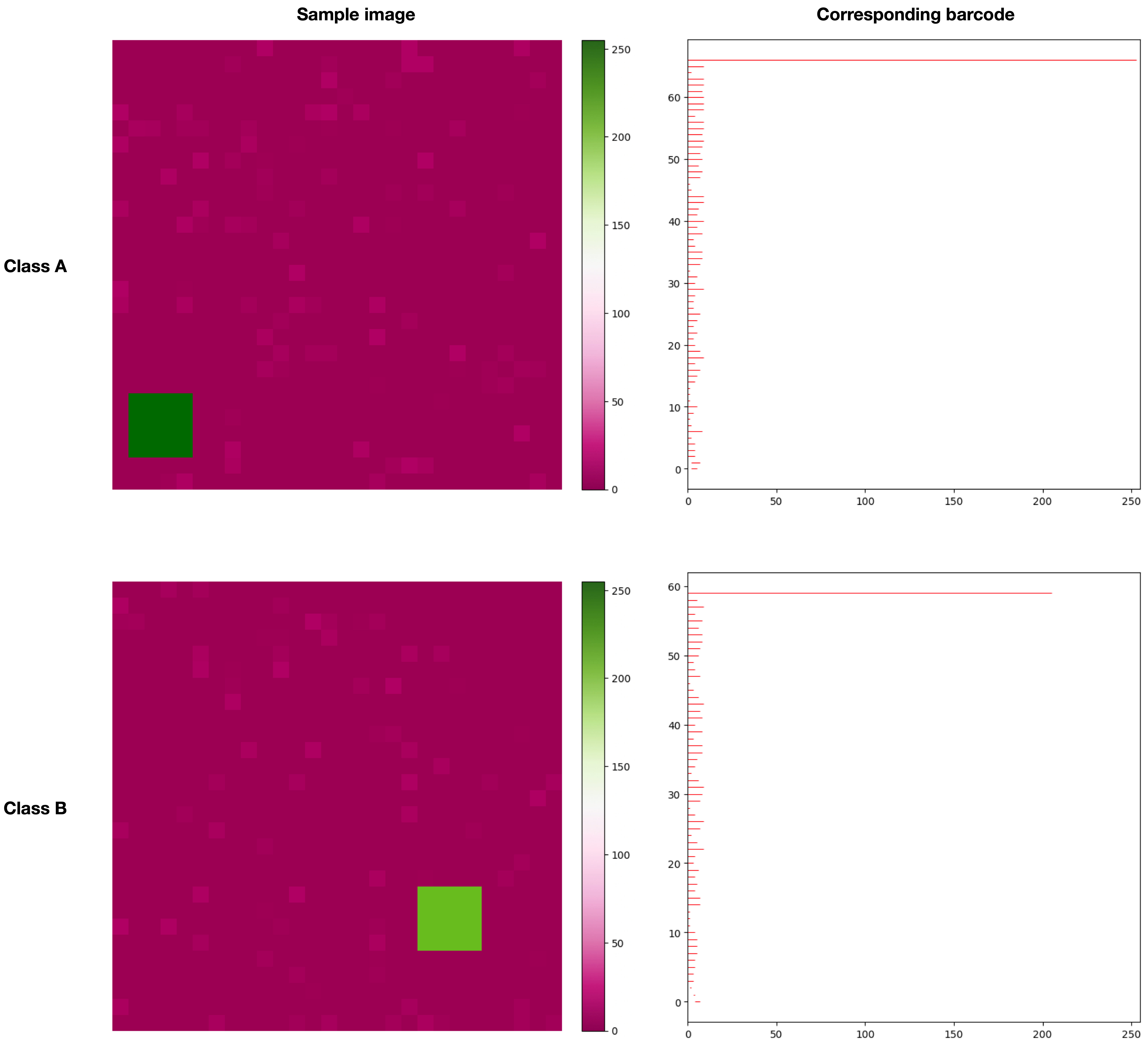}
        \caption{Dataset 1. \textbf{Left:} Sample images from classes A and B. \textbf{Right}: $H_0$ barcodes corresponding to the sample images.}
        \label{fig:artificial1_imgs_bars}
\end{figure}

\begin{figure}[ht]
     \centering
         \includegraphics[width=0.72\textwidth]{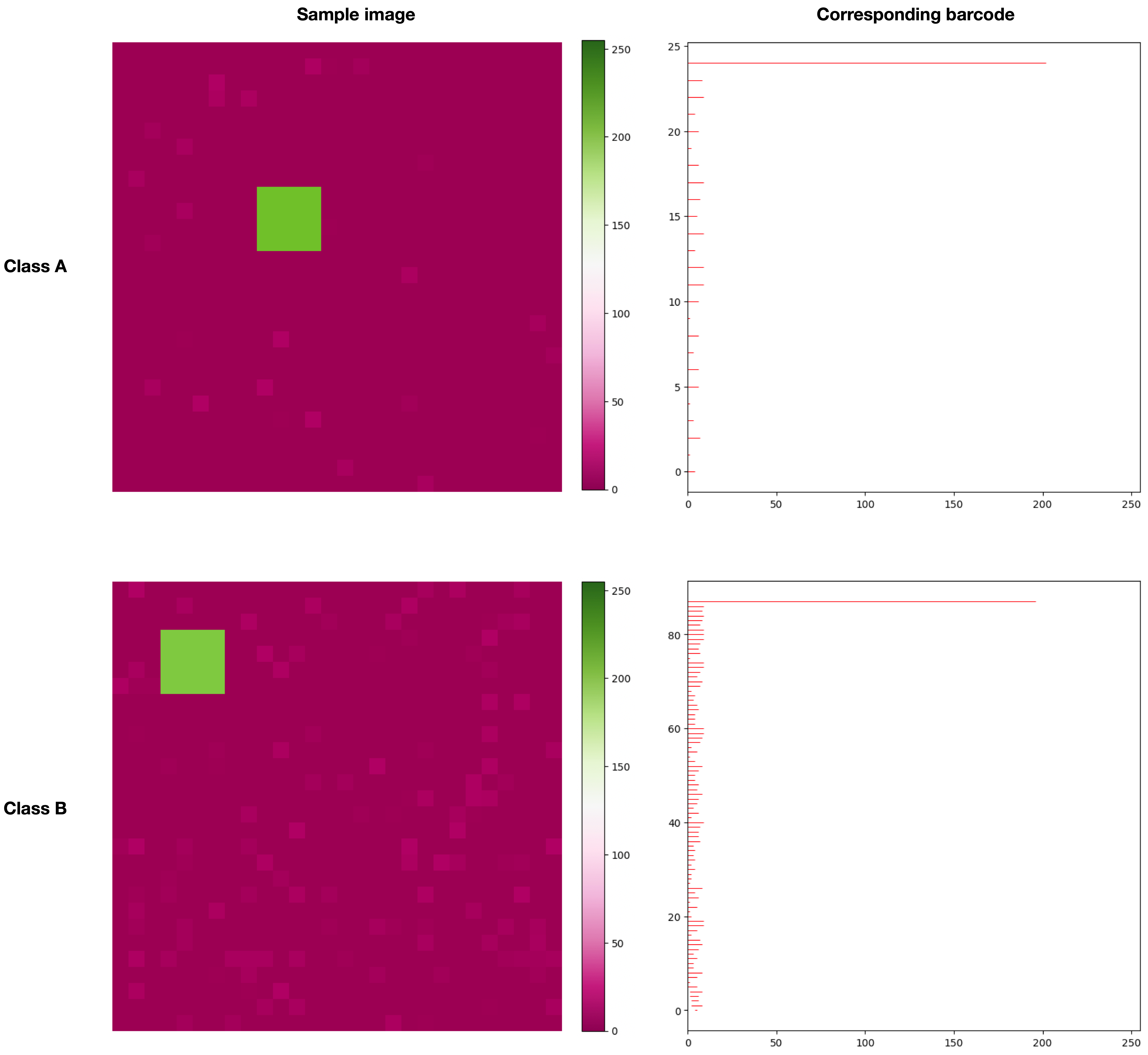}
        \caption{Dataset 2. \textbf{Left:} Sample images from classes A and B. \textbf{Right}: $H_0$ barcodes corresponding to the sample images.}
        \label{fig:artificial2_imgs_bars}
\end{figure}

This construction induces distributions of barcodes where barcode features (i.e. length of longest bar or number of bars in a given length range) are expected to be statistically distinct or indistinguishable.
In terms of the barcodes, for Dataset 1 the signal is by construction the single dominant topological feature (the long bar, whose length follows statistically different distributions between classes) while the noise is composed of the numerous short bars (corresponding to low intensity blocks, for which the number and intensity follows the same distribution in both classes).

In accordance with the intuition, for Dataset $1$, choosing a value of $p = \infty$ when generating the stable ranks effectively ``denoises'' the barcodes and organizes the space of Wasserstein stable ranks in a way where stable ranks of the same class are close to each other in interleaving distance but far from elements of the other class. Stable ranks corresponding to $p=1$ however fail to organize the corresponding distance space in this clear-cut way, being too sensitive to the noisy short bars in the barcodes. To illustrate this effect, in Figure \ref{fig:artificial1_clust} we show the hierarchical clustering (with average linkage, similar results were observed for complete and single linkage) corresponding to the distance spaces of Wasserstein stable ranks for $p=1$ and $p=\infty$.

On the contrary, for Dataset 2 the signal is by construction the number of short bars (numbers which follow statistically different distributions) while the noise is the single long bar (generated by blocks following the same distribution for both classes). In this case a choice of $p=1$ organizes the space of stable ranks such that elements of the same class cluster together, while $p = \infty$, being too sensitive to the (for this dataset) noisy long bar, fails to do so. This is illustrated in Figure \ref{fig:artificial2_clust}.

\begin{figure}[ht]
    \centering
    \includegraphics[width=0.48\textwidth]{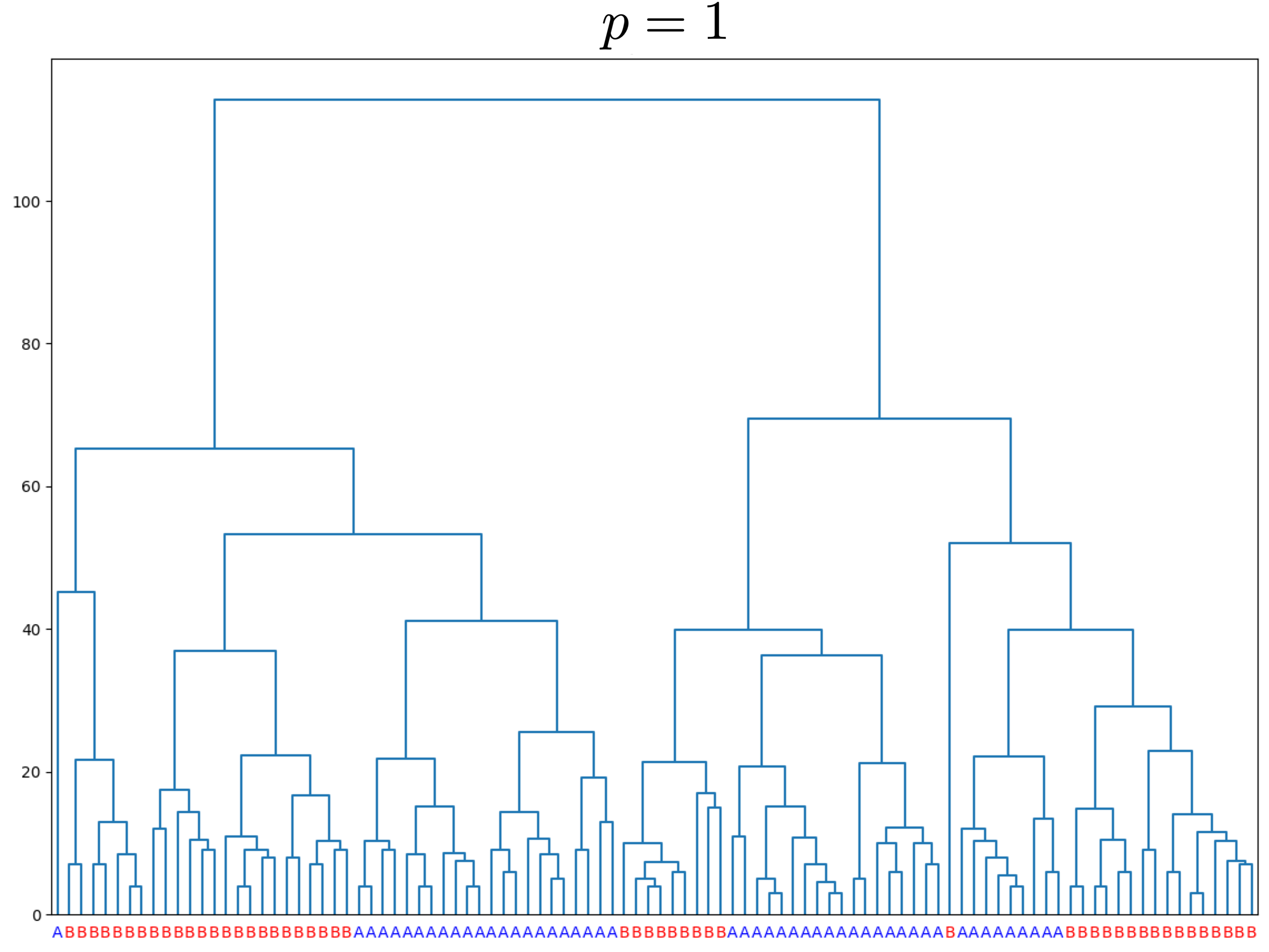}
    \includegraphics[width=0.48\textwidth]{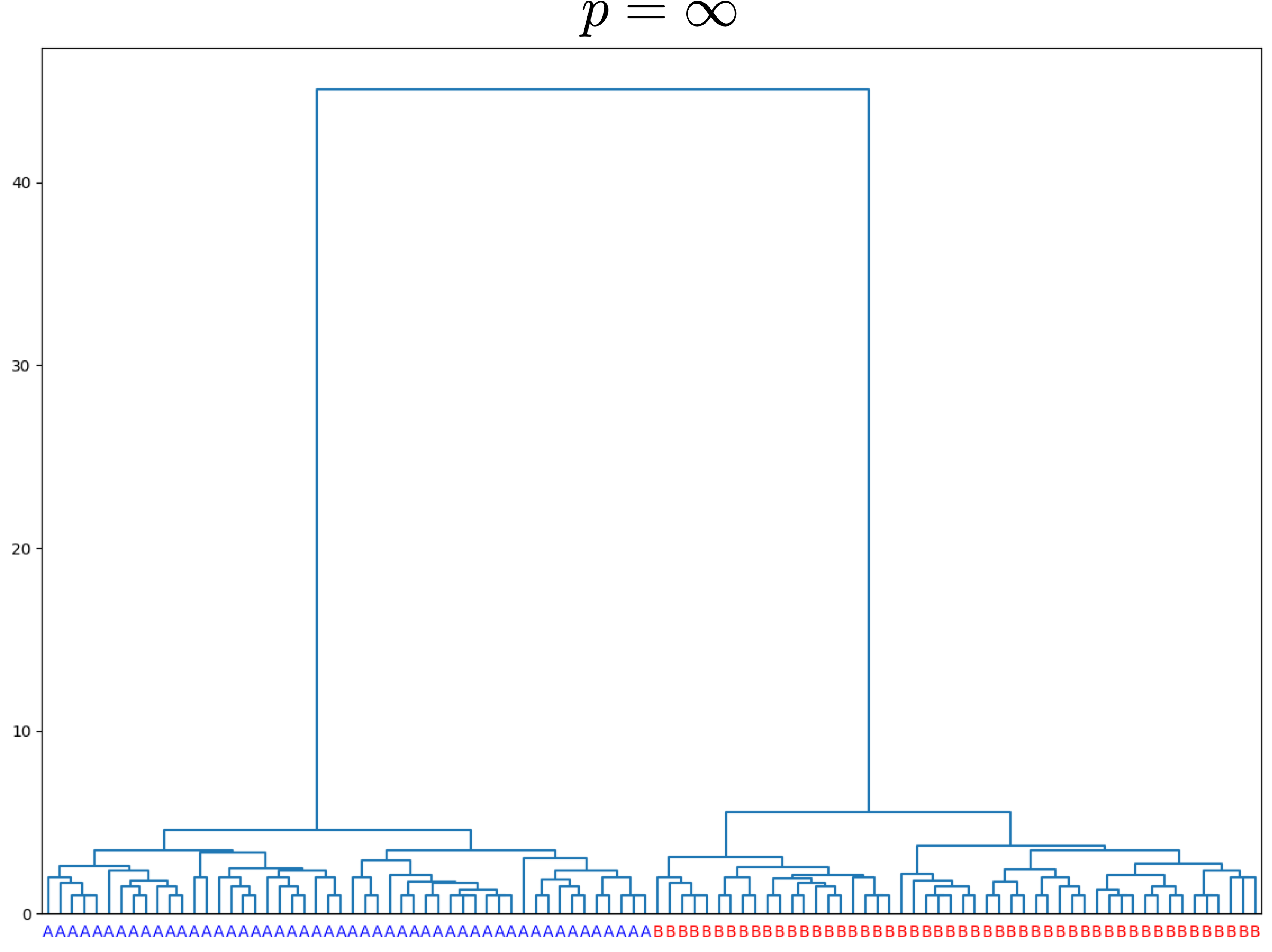}
    \caption{Dataset 1. Hierarchical clustering on the Wasserstein stable ranks for $p=1$ (left) and $p=\infty$ (right) with respect to the interleaving distance. The leaves (stable ranks in the dataset) are labeled and colored according to their class.}
    \label{fig:artificial1_clust}
\end{figure}

\begin{figure}[ht]
    \centering
    \includegraphics[width=0.48\textwidth]{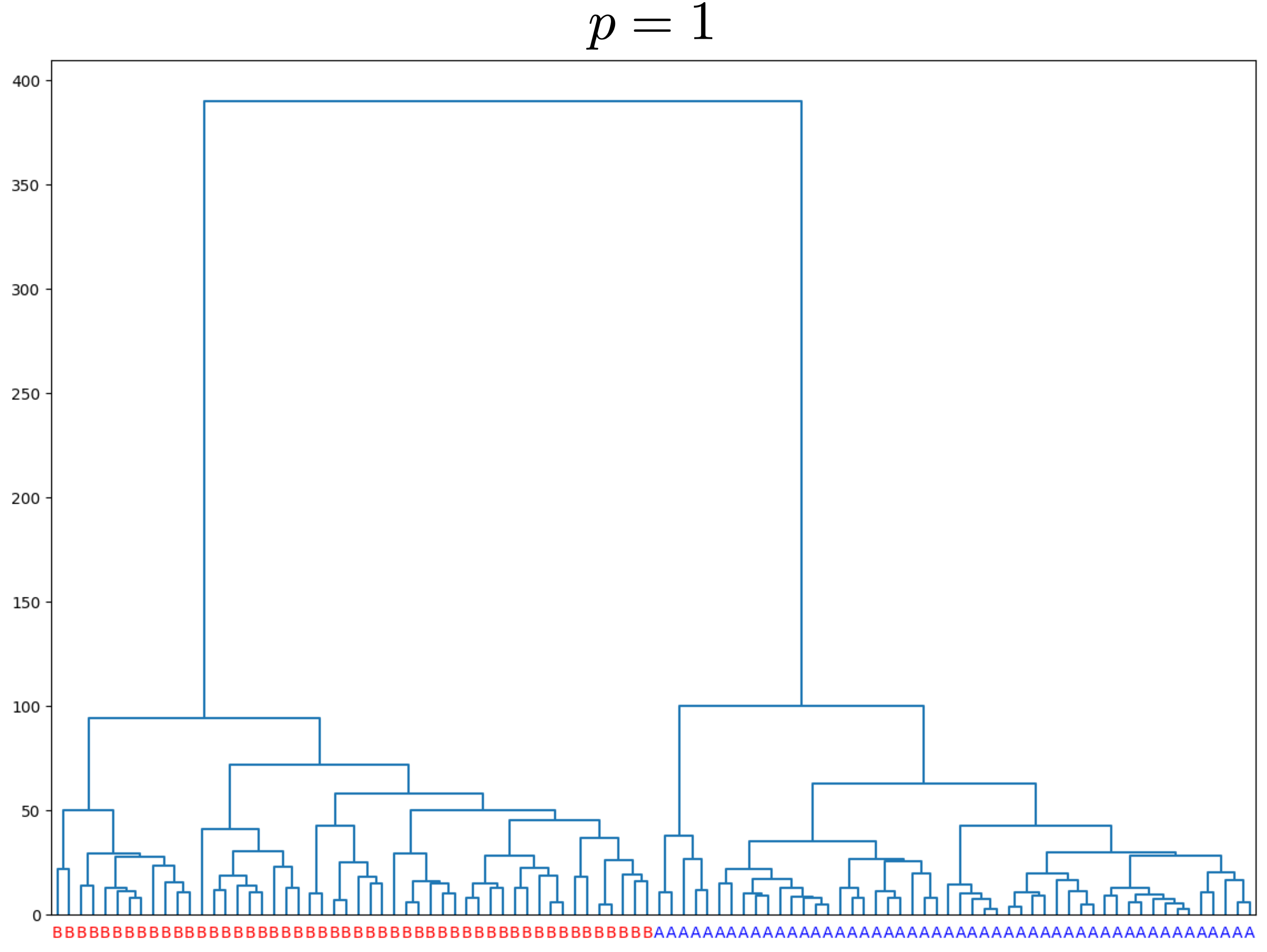}
    \includegraphics[width=0.48\textwidth]{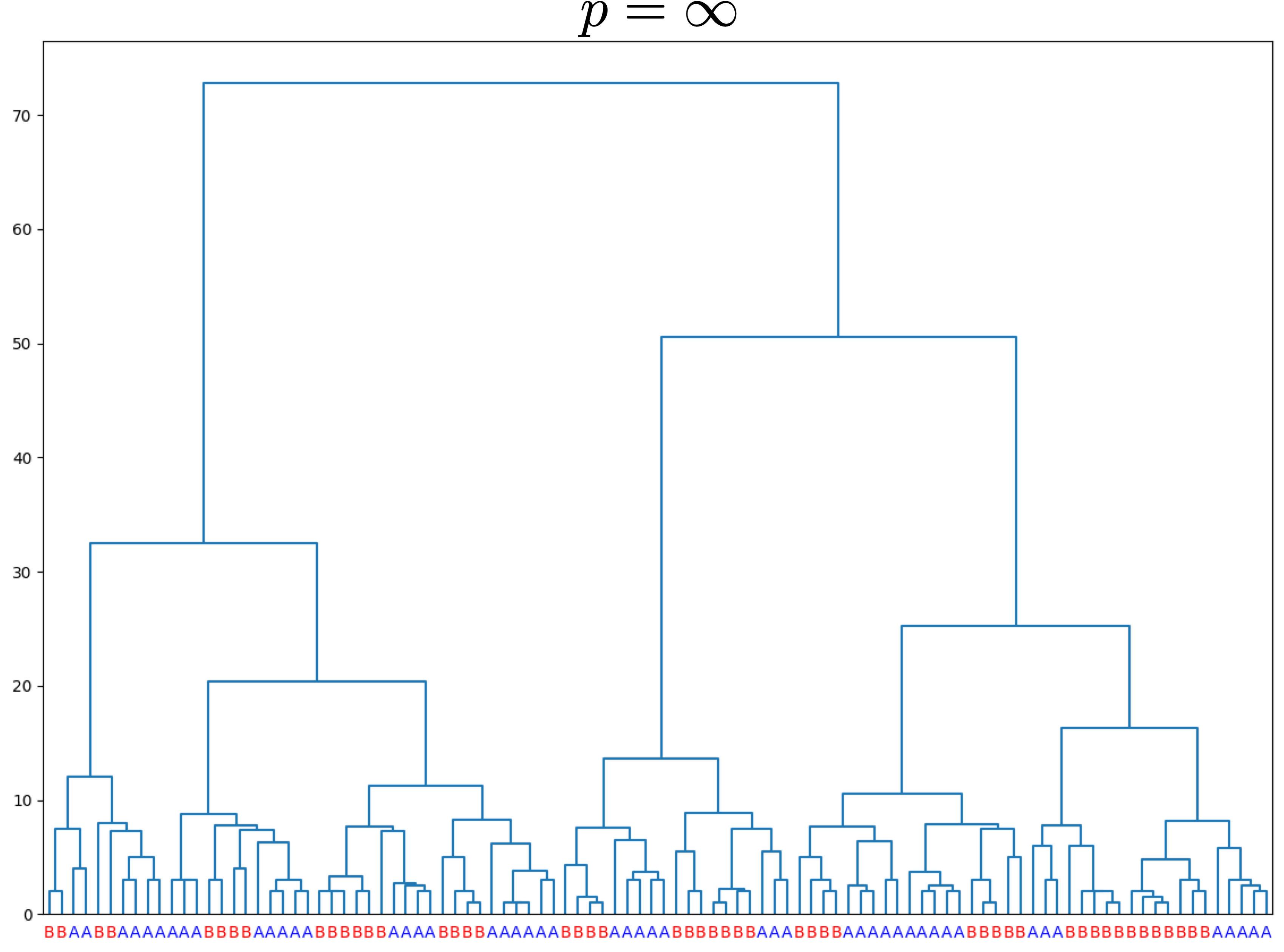}
    \caption{Dataset 2. Hierarchical clustering on the Wasserstein stable ranks for $p=1$ (left) and $p=\infty$ (right) with respect to the interleaving distance. The leaves (stable ranks in the dataset) are labeled and colored according to their class.}
    \label{fig:artificial2_clust}
\end{figure}

While the effect of changing $p$ on the structure of the distance space is clear for the parameters used to generate our artificial datasets, some class-based structure remains  on a small scale, also when choosing $p=\infty$. By increasing the amount of noise it is however possible to induce e.g.\ a nearest neighbor classifier to perform arbitrary poorly for the $p = \infty$ while still distinguishing the classes for $p=1$ (and vice versa for Dataset 1). 

The choice of the parameter value $p$, which we have demonstrated can have a large impact, is essentially related to the underlying distance between persistence modules. Using Wasserstein-stable invariants however has computational advantages, facilitates learning the right parameters for a particular problem and allows for a richer use of machine learning methods as we illustrate in the next subsection on a real-world dataset.

\subsection{Brain artery data}
\label{subsec:brain_artery}
In \cite{bullitt2010effects} a dataset of brain artery trees corresponding to 97 subjects aged 18 to 72 is introduced. Each data point is modeled as a tree embedded in $\mathbb{R}^3$. In \cite{bendich2016persistent} the dataset is further analyzed with 
 persistent homology. To be able to apply a sublevel set filtration on the tree, a real-valued function is defined on the vertices as the height of the vertex in the 3D-embedding. This is extended to a function on the edges by taking the maximum value of the weights of the vertices connected by the edge. After applying persistent homology, each tree is represented by a vector containing the sorted lengths of the $100$ longest bars in a barcode decomposition of the corresponding persistence module. This feature is further used to demonstrate, among other things, an age effect of brain artery structure, by showing that the projection of the vectors on the first principal component of the dataset is correlated with age.

The authors note that using vectors of sorted length was computationally more feasible than computing Wasserstein distances between the persistence diagrams and they are more amenable to statistical analysis. In addition, the authors observed that it was not necessary to use the whole vector of lengths to establish the correlation and in fact the topological features of medium length, rather than the longest ones, were the most discriminatory. 

Analyzing the dataset with stable ranks offers computational and statistical advantages. Moreover, for this problem where the discriminative information is not contained in the most persistent feature, considering other distances than the bottleneck ($p = \infty$) and more generally tuning the parameter $p$ might be beneficial. Finally, combining the tuning of the parameter $p$ with a contour might increase the power of the method.
Indeed the parameter $p$ and the contour, intuitively are related to different features of a persistence barcode: while the parameter $p$ globally weights the importance of long versus short bars as illustrated in Section \ref{subsec:synthetic}, the contour 
 allows to focus on the most informative  filtration scales.

While we also study age effects of brain artery structure, we choose to binarize the problem by creating two classes of similar size: \textit{young} ($age < 45$, 50 subjects) and \textit{old} ($age \geq 45$, 47 subjects) and treat the problem as a classification.

We start by studying the effect of varying $p$ alone. We compute the distances between Wasserstein stable ranks with standard contour. We classify the samples in the distance space thus obtained by using the $k$-nearest neighbors algorithm \cite{pedregosa2011scikit} (the parameter $k$ is chosen in a cross-validation procedure). Repeating this for various values of $p$ we observe a difference in the resulting accuracy, plotted in Figure \ref{fig:acc_different_p}, with the highest values obtained for $p$ in the medium range ($2-3$).
 This is in line with the conclusion in \cite{bendich2016persistent} that the highest persistent features alone have a small distinguishing power, while medium sized bars reflect variations in brain artery trees within subject of different ages.
In \cite{bendich2016persistent} only the length of bars in a barcode is used to compare barcodes of different classes. With our features parametrized by $p$ and a contour $C$, we can take into consideration both the length of bars and their position in the parameter space. 

\begin{figure}
     \centering
         \includegraphics[width=0.55\textwidth]{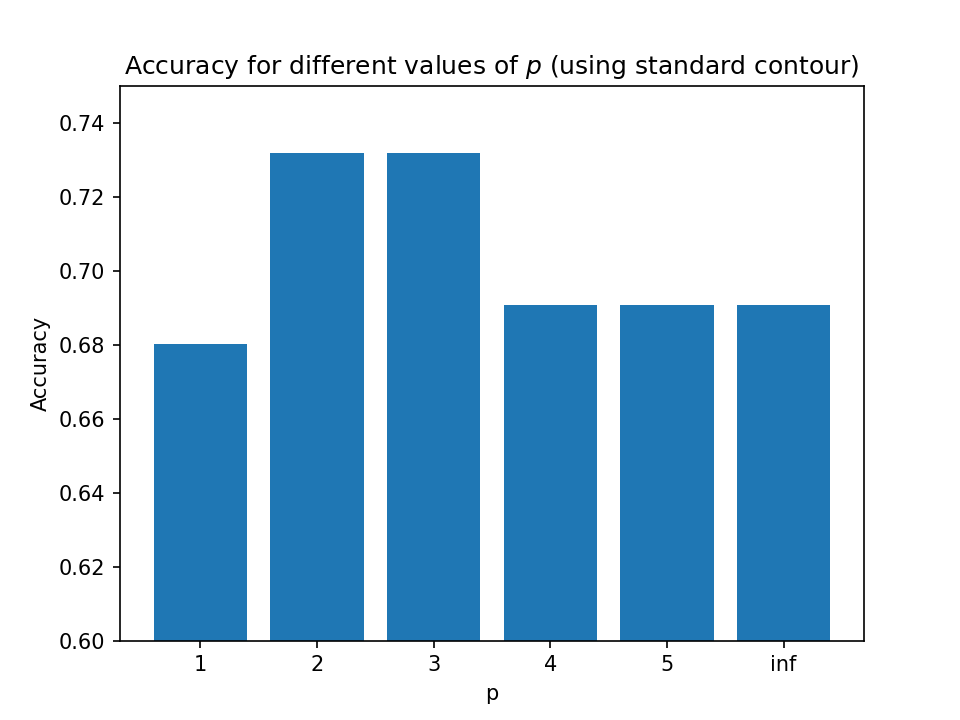}
         \caption{Accuracy on the brain artery problem using distances between stable ranks and KNN, for standard contour and different values of $p$.}
        \label{fig:acc_different_p}
\end{figure}

We therefore next turn to the problem of learning the contour of the stable ranks as well. We use the metric learning method described in Section \ref{sect:metriclearning}. Using 
leave-one-out cross-validation (LOOCV), for each training fold we learn the metric that optimally separates training samples from the two classes by minimizing the loss defined in (\ref{eq:metriclearningloss}). We then classify using KNN in the obtained distance space.

For metric learning, the contours are parametrized by densities which are unnormalized Gaussian mixtures with two components. The loss function is implemented in PyTorch \cite{paszke2019pytorch}. After a random initialization of the parameters, projected gradient descent (to respect the constraints $p \geq 1, \lambda_i, \sigma_i > 0$) 
with momentum is used to achieve a lower loss. An example of an optimization on a training fold over $25000$ iterations is shown in Figure \ref{fig:brain_parameters_prog}. The average and standard deviation of the optimized parameters, over the LOOCV folds can instead be found in Table \ref{table:converged_values}.
\begin{figure}
     \centering
         \includegraphics[width=1\textwidth]{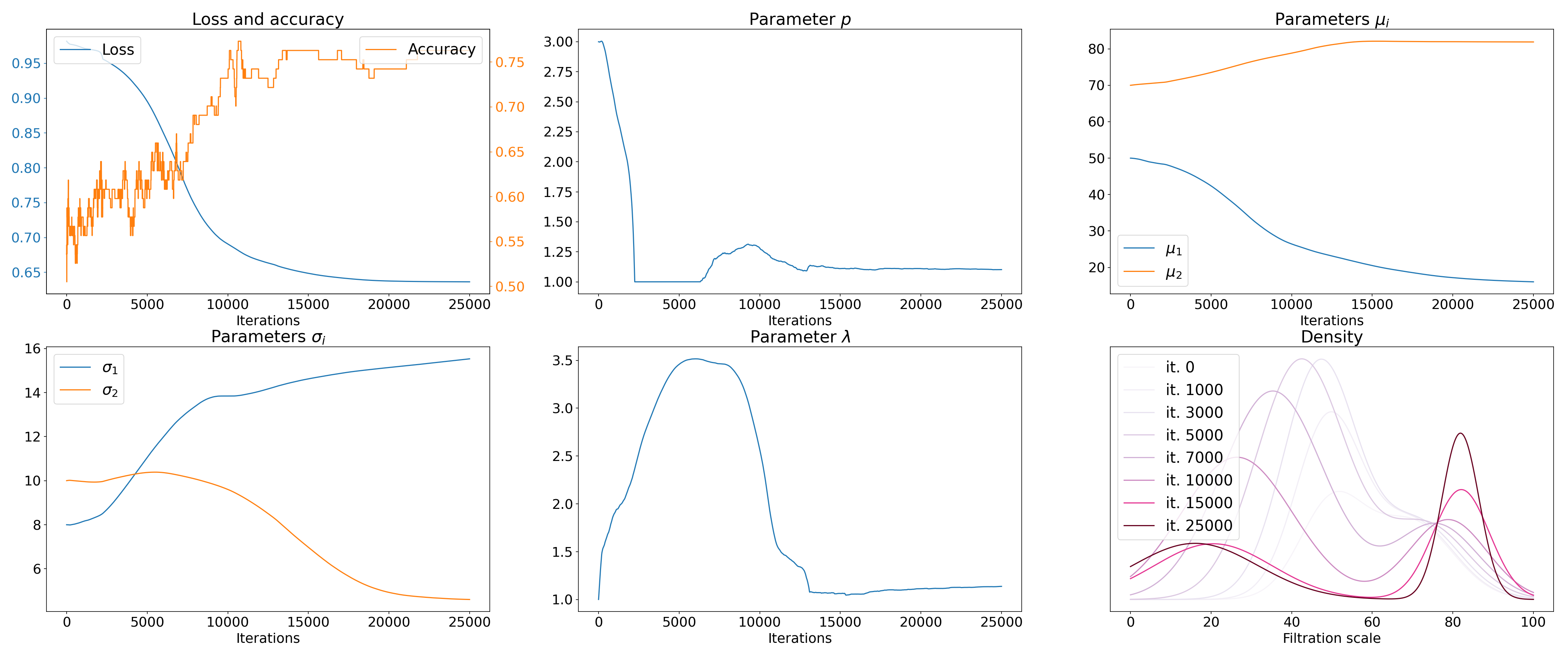}
        \caption{Results for one example run of the metric learning optimization for Wasserstein stable ranks (see Section \ref{sect:metriclearning}) over $25000$ iterations. \textbf{Top Left:} Progression of the loss and the KNN training fold accuracy over the iterations. \textbf{Top Middle, Top Right, Bottom Left, Bottom Middle}: Progression of the parameters in $\theta=(\mu_1,\mu_2,\sigma_1,\sigma_2,\lambda_2, p)$ parametrizing Wasserstein stable ranks: $p$, mean $\mu_i$, standard deviation $\sigma_i$ and $\lambda_2$ respectively over the iterations. \textbf{Bottom Right}: Density at different iterations.}
        \label{fig:brain_parameters_prog}
\end{figure}

\begin{table}[ht]
\centering
\label{tab:parameter_estimates}
\begin{tabular}{ccccccc}
\toprule
$p$ & $\mu_1$ & $\mu_2$ & $\sigma_1$ & $\sigma_2$ & $\lambda$ \\ 
\midrule
$1.10 \pm 0.04$ & $16.45 \pm 0.75$ & $81.88 \pm 0.27$ & $15.51 \pm 0.45$ & $4.55 \pm 0.24$ & $1.15 \pm 0.05$ \\
\bottomrule
\end{tabular}
\caption{Converged values for the parameters of the metric learning problem (mean and standard deviation over the LOOCV folds).}
\label{table:converged_values}
\end{table}

The metric learning is effective in finding distances that improve the classification performance: running the optimization problem not only decreases the loss  but also increases the corresponding classification accuracy (as is seen in Figure \ref{fig:brain_parameters_prog} in the top left plot), reaching $76.3 \%$ with the parameter values summarized in Table \ref{table:converged_values}. 

This is an improvement compared to the accuracies obtained at random initialization (between $44.3\%$ and $71.1\%$ for 10 random initializations of the parameters in Table \ref{table:converged_values}), showing the benefit of learning, but also compared to the results obtained when only varying $p$ and considering the standard contour in Figure \ref{fig:acc_different_p}. It is thus when we learn both $p$ and the contour that the best loss and corresponding classification accuracy is achieved.

\begin{figure}[h!]
    \centering
    \includegraphics[width=0.48\textwidth]{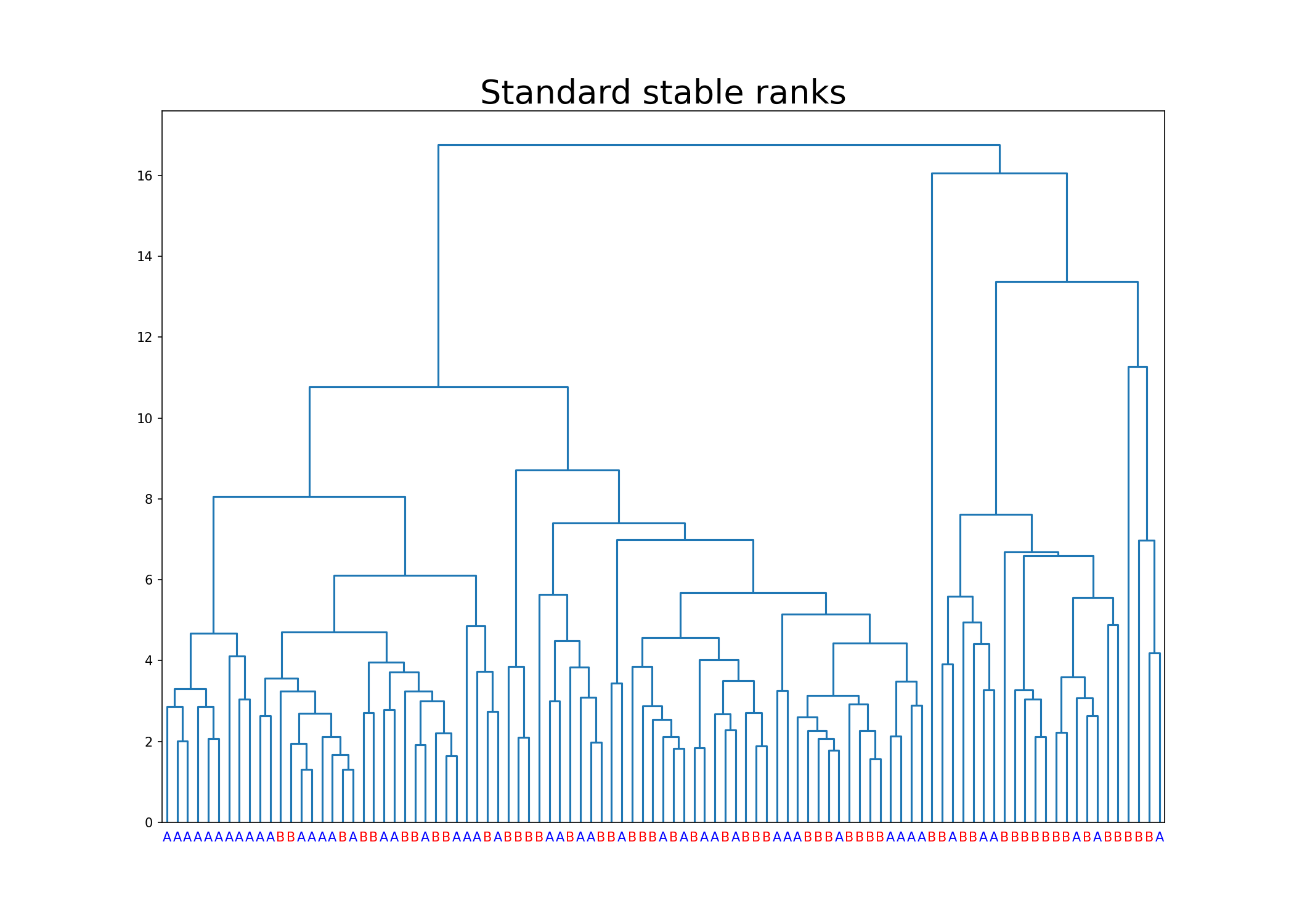}
    \includegraphics[width=0.48\textwidth]{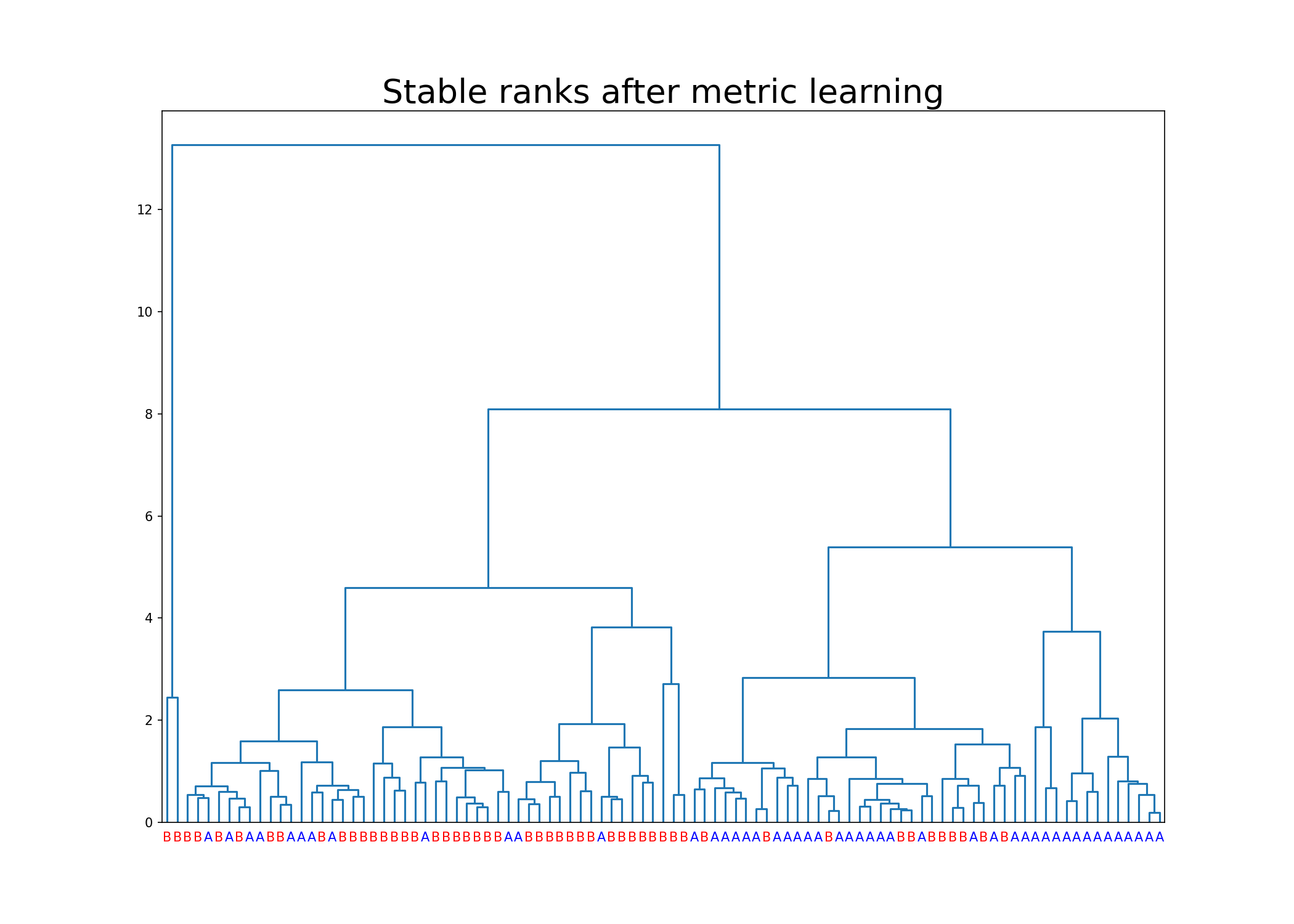}
    \caption{Hierarchical clustering on the standard stable ranks (left) and the optimized stable ranks resulting from the metric learning problem (right) with respect to the interleaving distance. The leaves (stable ranks in the dataset) are labeled and colored according to their class (A is $age \geq 45$, B is $age < 45$).}
    \label{fig:brain_clust}
\end{figure}

 In Figure \ref{fig:brain_clust} we illustrate the effect of the metric learning by plotting the hierarchical clustering (with average linkage) corresponding to the standard stable ranks (i.e., with $p=\infty$ and standard contour) and to the optimized stable ranks. We see that the optimized stable ranks (with the exception of two outliers) group into two clusters: one with a majority of class A and the other with a majority of class B, while the pattern for standard stable ranks is less clear.

The optimal parameters found with the metric learning method are of interest because they allow to construct a distance space in which machine learning methods can be carried out, but they are also interpretable: they contain information about which features of the dataset are important to distinguish the two classes.

\begin{figure}
     \centering
         \includegraphics[width=0.8\textwidth]{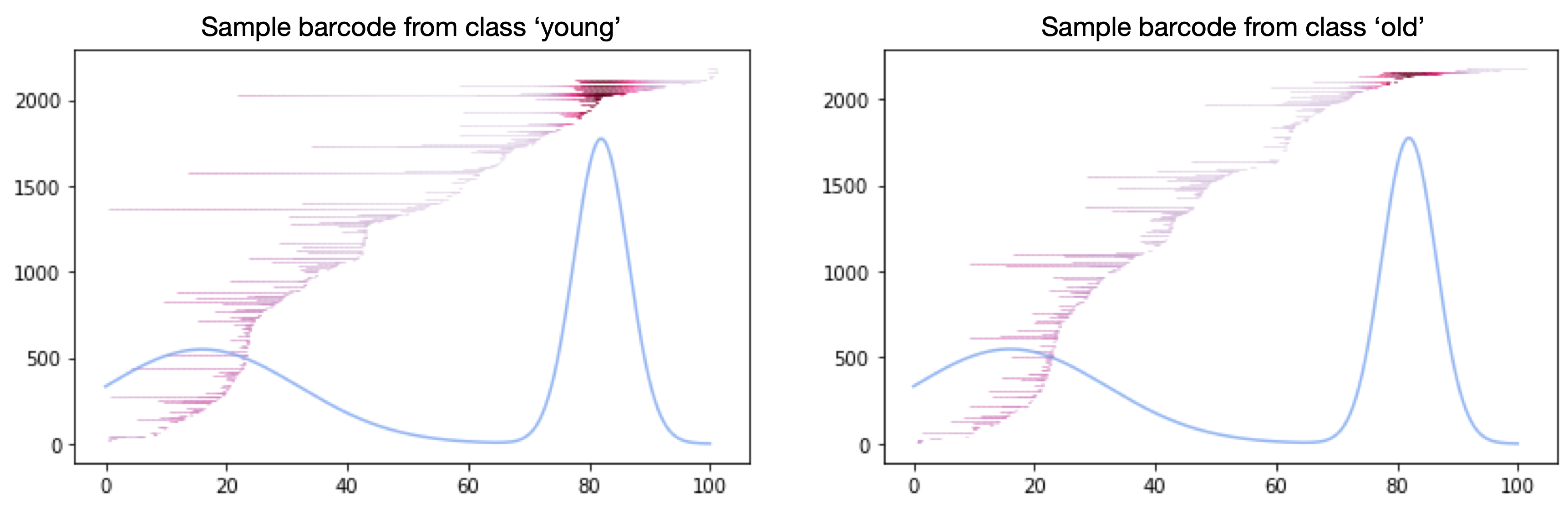}
        \caption{Sample barcodes from the two classes with superposed learned density. Bars are colored according to the density. 
        }
        \label{fig:brain_barcodes_density}
\end{figure}

This is illustrated in Figure \ref{fig:brain_barcodes_density} where two sample barcodes – one from each class – are displayed with the optimal density superposed and the bars colored according to the density. From the insight that some parts of the filtration scale are more important in distinguishing younger from older subjects, one may pursue the analysis by looking for characteristics of bars in that region of the barcode. One can also take the analysis a step further by looking at the object from which the filtered simplicial complex was created. In our case, since the filtration scale corresponds to the height (z-coordinate) in the 3D-embedding of the brain artery tree, one may for example investigate whether differences in brain artery between subjects of different ages in this particular region carries a biological meaning.


\section*{Declarations}

\textbf{Conflict of interest.}
On behalf of all authors, the corresponding author states that there is no conflict of interest.


\bibliographystyle{alpha}
\bibliography{BiblioDatabase}

\newcommand{\etalchar}[1]{$^{#1}$}
\begin{thebibliography}{CSEHM10}

\bibitem[AEK{\etalchar{+}}17]{adams2017persistence}
Henry Adams, Tegan Emerson, Michael Kirby, Rachel Neville, Chris Peterson, Patrick Shipman, Sofya Chepushtanova, Eric Hanson, Francis Motta, and Lori Ziegelmeier.
\newblock Persistence images: A stable vector representation of persistent homology.
\newblock {\em Journal of Machine Learning Research}, 18, 2017.

\bibitem[AM21]{adams2021topology}
Henry Adams and Michael Moy.
\newblock Topology applied to machine learning: from global to local.
\newblock {\em Frontiers in Artificial Intelligence}, 4(668302), 2021.

\bibitem[ARSC21]{agerberg2021supervised}
Jens Agerberg, Ryan Ramanujam, Martina Scolamiero, and Wojciech Chach{\'o}lski.
\newblock Supervised learning using homology stable rank kernels.
\newblock {\em Frontiers in Applied Mathematics and Statistics}, 7, 2021.

\bibitem[BDSS15]{bubenik2015metrics}
Peter Bubenik, Vin De~Silva, and Jonathan Scott.
\newblock Metrics for generalized persistence modules.
\newblock {\em Foundations of Computational Mathematics}, 15(6):1501--1531, 2015.

\bibitem[BL15]{bauer2015induced}
Ulrich Bauer and Michael Lesnick.
\newblock Induced matchings and the algebraic stability of persistence barcodes.
\newblock {\em Journal of Computational Geometry}, 6(2):162--191, 2015.

\bibitem[BM21]{bubenik2021homological}
Peter Bubenik and Nikola Mili{\'c}evi{\'c}.
\newblock Homological algebra for persistence modules.
\newblock {\em Foundations of Computational Mathematics}, 21(5):1233--1278, 2021.

\bibitem[BMM{\etalchar{+}}16]{bendich2016persistent}
Paul Bendich, James~S Marron, Ezra Miller, Alex Pieloch, and Sean Skwerer.
\newblock Persistent homology analysis of brain artery trees.
\newblock {\em The annals of applied statistics}, 10(1):198, 2016.

\bibitem[BS23]{BauerSchmahl2023}
Ulrich Bauer and Maximilian Schmahl.
\newblock Lifespan functors and natural dualities in persistent homology.
\newblock {\em Homology, Homotopy and Applications}, 25(2):297--327, 2023.

\bibitem[BSS23]{bubenik2022exact}
Peter Bubenik, Jonathan Scott, and Donald Stanley.
\newblock Exact weights, path metrics, and algebraic {W}asserstein distances.
\newblock {\em Journal of Applied and Computational Topology}, 7(2):185--219, 2023.

\bibitem[BZM{\etalchar{+}}10]{bullitt2010effects}
Elizabeth Bullitt, Donglin Zeng, Benedicte Mortamet, Arpita Ghosh, Stephen~R Aylward, Weili Lin, Bonita~L Marks, and Keith Smith.
\newblock The effects of healthy aging on intracerebral blood vessels visualized by magnetic resonance angiography.
\newblock {\em Neurobiology of aging}, 31(2):290--300, 2010.

\bibitem[CCI{\etalchar{+}}20]{carriere2020perslay}
Mathieu Carri{\`e}re, Fr{\'e}d{\'e}ric Chazal, Yuichi Ike, Th{\'e}o Lacombe, Martin Royer, and Yuhei Umeda.
\newblock Perslay: A neural network layer for persistence diagrams and new graph topological signatures.
\newblock In {\em International Conference on Artificial Intelligence and Statistics}, pages 2786--2796. PMLR, 2020.

\bibitem[CDSGO16]{chazal2016structure}
Fr{\'e}d{\'e}ric Chazal, Vin De~Silva, Marc Glisse, and Steve Oudot.
\newblock {\em The structure and stability of persistence modules}, volume~10.
\newblock Springer, 2016.

\bibitem[CGS22]{chubet2022theory}
Oliver~A Chubet, Kirk~P Gardner, and Donald~R Sheehy.
\newblock A theory of sub-barcodes.
\newblock {\em arXiv preprint arXiv:2206.10504v1}, 2022.

\bibitem[CHLS18]{chung2018topological}
Yu-Min Chung, Chuan-Shen Hu, Austin Lawson, and Clifford Smyth.
\newblock Topological approaches to skin disease image analysis.
\newblock In {\em 2018 IEEE International Conference on Big Data (Big Data)}, pages 100--105. IEEE, 2018.

\bibitem[CR20]{chacholski2020metrics}
Wojciech Chach{\'o}lski and Henri Riihimaki.
\newblock Metrics and stabilization in one parameter persistence.
\newblock {\em SIAM Journal on Applied Algebra and Geometry}, 4(1):69--98, 2020.

\bibitem[CSEHM10]{cohen2010lipschitz}
David Cohen-Steiner, Herbert Edelsbrunner, John Harer, and Yuriy Mileyko.
\newblock Lipschitz functions have {$L_p$}-stable persistence.
\newblock {\em Foundations of computational mathematics}, 10(2):127--139, 2010.

\bibitem[CSEM06]{cohen2006vines}
David Cohen-Steiner, Herbert Edelsbrunner, and Dmitriy Morozov.
\newblock Vines and vineyards by updating persistence in linear time.
\newblock In {\em Proceedings of the twenty-second annual symposium on Computational geometry}, pages 119--126, 2006.

\bibitem[CWRW15]{chen2015statistical}
Yen-Chi Chen, Daren Wang, Alessandro Rinaldo, and Larry Wasserman.
\newblock Statistical analysis of persistence intensity functions.
\newblock {\em arXiv preprint arXiv:1510.02502v1}, 2015.

\bibitem[dSMS18]{deSilva2018theory}
Vin de~Silva, Elizabeth Munch, and Anastasios Stefanou.
\newblock Theory of interleavings on categories with a flow.
\newblock {\em Theory and Applications of Categories}, 33(21):583--607, 2018.

\bibitem[ELZ00]{edelsbrunner2000topological}
Herbert Edelsbrunner, David Letscher, and Afra Zomorodian.
\newblock Topological persistence and simplification.
\newblock In {\em Proceedings 41st annual symposium on foundations of computer science}, pages 454--463. IEEE, 2000.

\bibitem[G{\"a}v18]{gavfert2018poster}
Oliver G{\"a}vfert.
\newblock Topology-based metric learning.
\newblock {\em Conference poster. Available: \href{https://people.kth.se/~oliverg/TAGS_poster.pdf}{https://people.kth.se/\~{}oliverg/TAGS\_poster.pdf}}, 2018.

\bibitem[GC17]{gafvert2017stable}
Oliver G{\"a}fvert and Wojciech Chach{\'o}lski.
\newblock Stable invariants for multidimensional persistence.
\newblock {\em arXiv preprint arXiv:1703.03632v3}, 2017.

\bibitem[GHMM19]{garside2019topological}
Kathryn Garside, Robin Henderson, Irina Makarenko, and Cristina Masoller.
\newblock Topological data analysis of high resolution diabetic retinopathy images.
\newblock {\em PloS one}, 14(5):e0217413, 2019.

\bibitem[GHS87]{golub1987generalization}
Gene~H Golub, Alan Hoffman, and Gilbert~W Stewart.
\newblock A generalization of the {E}ckart-{Y}oung-{M}irsky matrix approximation theorem.
\newblock {\em Linear Algebra and its applications}, 88:317--327, 1987.

\bibitem[GNOW24]{giunti2021amplitudes}
Barbara Giunti, John~S Nolan, Nina Otter, and Lukas Waas.
\newblock Amplitudes in persistence theory.
\newblock {\em Journal of Pure and Applied Algebra}, 228:1501--1531, 2024.

\bibitem[GT19]{garin2019topological}
Ad{\'e}lie Garin and Guillaume Tauzin.
\newblock A topological ``reading'' lesson: Classification of {MNIST} using {TDA}.
\newblock In {\em 2019 18th IEEE International Conference On Machine Learning And Applications (ICMLA)}, pages 1551--1556. IEEE, 2019.

\bibitem[HKNU17]{hofer2017deep}
Christoph Hofer, Roland Kwitt, Marc Niethammer, and Andreas Uhl.
\newblock Deep learning with topological signatures.
\newblock {\em Advances in neural information processing systems}, 30, 2017.

\bibitem[HNH{\etalchar{+}}16]{hiraoka2016hierarchical}
Yasuaki Hiraoka, Takenobu Nakamura, Akihiko Hirata, Emerson~G Escolar, Kaname Matsue, and Yasumasa Nishiura.
\newblock Hierarchical structures of amorphous solids characterized by persistent homology.
\newblock {\em Proceedings of the National Academy of Sciences}, 113(26):7035--7040, 2016.

\bibitem[KFH17]{kusano2017kernel}
Genki Kusano, Kenji Fukumizu, and Yasuaki Hiraoka.
\newblock Kernel method for persistence diagrams via kernel embedding and weight factor.
\newblock {\em The Journal of Machine Learning Research}, 18(1):6947--6987, 2017.

\bibitem[KMN17]{kerber2017geometry}
Michael Kerber, Dmitriy Morozov, and Arnur Nigmetov.
\newblock Geometry helps to compare persistence diagrams.
\newblock {\em ACM Journal of Experimental Algorithmics}, 22(1.4):1--20, 2017.

\bibitem[Les15]{lesnick2015theory}
Michael Lesnick.
\newblock The theory of the interleaving distance on multidimensional persistence modules.
\newblock {\em Foundations of Computational Mathematics}, 15(3):613--650, 2015.

\bibitem[Mil20]{miller2020essential}
Ezra Miller.
\newblock Essential graded algebra over polynomial rings with real exponents.
\newblock {\em arXiv preprint arXiv:2008.03819v1}, 2020.

\bibitem[OPT{\etalchar{+}}17]{otter2017roadmap}
Nina Otter, Mason~A Porter, Ulrike Tillmann, Peter Grindrod, and Heather~A Harrington.
\newblock A roadmap for the computation of persistent homology.
\newblock {\em EPJ Data Science}, 6:1--38, 2017.

\bibitem[PGM{\etalchar{+}}19]{paszke2019pytorch}
Adam Paszke, Sam Gross, Francisco Massa, Adam Lerer, James Bradbury, Gregory Chanan, Trevor Killeen, Zeming Lin, Natalia Gimelshein, Luca Antiga, et~al.
\newblock Pytorch: An imperative style, high-performance deep learning library.
\newblock {\em Advances in neural information processing systems}, 32, 2019.

\bibitem[PVG{\etalchar{+}}11]{pedregosa2011scikit}
Fabian Pedregosa, Ga{\"e}l Varoquaux, Alexandre Gramfort, Vincent Michel, Bertrand Thirion, Olivier Grisel, Mathieu Blondel, Peter Prettenhofer, Ron Weiss, Vincent Dubourg, et~al.
\newblock Scikit-learn: Machine learning in python.
\newblock {\em The Journal of Machine Learning Research}, 12:2825--2830, 2011.

\bibitem[QTT{\etalchar{+}}19]{qaiser2019fast}
Talha Qaiser, Yee-Wah Tsang, Daiki Taniyama, Naoya Sakamoto, Kazuaki Nakane, David Epstein, and Nasir Rajpoot.
\newblock Fast and accurate tumor segmentation of histology images using persistent homology and deep convolutional features.
\newblock {\em Medical image analysis}, 55:1--14, 2019.

\bibitem[RCB21]{reinauer2021persformer}
Raphael Reinauer, Matteo Caorsi, and Nicolas Berkouk.
\newblock Persformer: A transformer architecture for topological machine learning.
\newblock {\em arXiv preprint arXiv:2112.15210v2}, 2021.

\bibitem[RHBK15]{reininghaus2015stable}
Jan Reininghaus, Stefan Huber, Ulrich Bauer, and Roland Kwitt.
\newblock A stable multi-scale kernel for topological machine learning.
\newblock In {\em Proceedings of the IEEE conference on computer vision and pattern recognition}, pages 4741--4748, 2015.

\bibitem[Rot09]{rotman2009homological}
Joseph Rotman.
\newblock {\em An Introduction to Homological Algebra}.
\newblock Springer-Verlag New York, 2nd edition, 2009.

\bibitem[SCL{\etalchar{+}}17]{scolamiero2017multidimensional}
Martina Scolamiero, Wojciech Chach{\'o}lski, Anders Lundman, Ryan Ramanujam, and Sebastian {\"O}berg.
\newblock Multidimensional persistence and noise.
\newblock {\em Foundations of Computational Mathematics}, 17(6):1367--1406, 2017.

\bibitem[SHP17]{stolz2017persistent}
Bernadette~J Stolz, Heather~A Harrington, and Mason~A Porter.
\newblock Persistent homology of time-dependent functional networks constructed from coupled time series.
\newblock {\em Chaos: An Interdisciplinary Journal of Nonlinear Science}, 27(4):047410, 2017.

\bibitem[ST20]{skraba2020wasserstein}
Primoz Skraba and Katharine Turner.
\newblock Wasserstein stability for persistence diagrams.
\newblock {\em arXiv preprint arXiv:2006.16824v5}, 2020.

\bibitem[Ste04]{steele2004cauchy}
J~Michael Steele.
\newblock {\em The Cauchy-Schwarz master class: an introduction to the art of mathematical inequalities}.
\newblock Cambridge University Press, 2004.

\bibitem[TNVL21]{turkevs2021noise}
Renata Turke{\v{s}}, Jannes Nys, Tim Verdonck, and Steven Latr{\'e}.
\newblock Noise robustness of persistent homology on greyscale images, across filtrations and signatures.
\newblock {\em PloS one}, 16(9):e0257215, 2021.

\bibitem[Vin90]{vince1990rearrangement}
A~Vince.
\newblock A rearrangement inequality and the permutahedron.
\newblock {\em The American mathematical monthly}, 97(4):319--323, 1990.

\bibitem[ZC05]{zomorodian2005computing}
Afra Zomorodian and Gunnar Carlsson.
\newblock Computing persistent homology.
\newblock {\em Discrete \& Computational Geometry}, 33:249--274, 2005.

\bibitem[ZW19]{zhao2019learning}
Qi~Zhao and Yusu Wang.
\newblock Learning metrics for persistence-based summaries and applications for graph classification.
\newblock {\em Advances in Neural Information Processing Systems}, 32, 2019.

\end{thebibliography}


\bigskip
\bigskip
{\footnotesize
\noindent
\textsc{Department of Mathematics, KTH, S-10044 Stockholm, Sweden}\\
\texttt{ \{jensag,guidolin,isaacren,scola\}@kth.se}\\
Corresponding author: Martina Scolamiero (\texttt{scola@kth.se})
}

\end{document}